\newcounter{fseqcount}[section]
\newcommand{\fslabel}[1]{\label{#1}}
\newcommand{\fseqlabel}[1]{\stepcounter{fseqcount}\label(\thesection.\arabic{fseqcount}){#1}}
\newtheorem*{thm_taut_impl_taut}{Theorem \ref{thm_0_taut_implies_most_p_taut}}
\newtheorem{thm}{Theorem}[section]
\newtheorem{prop}[thm]{Proposition}
\newtheorem{cor}[thm]{Corollary}
\newtheorem{leme}[thm]{Lemma}
\theoremstyle{remark}
\newtheorem{rem}[thm]{Remark}
\theoremstyle{definition}
\newtheorem{defin}[thm]{Definition}
\newtheorem{conj}[thm]{Conjecture}
\newcommand{\C}{\mathds{C}}
\newcommand{\FM}{\mathfrak{M}}
\newcommand{\N}{\mathds{N}}
\newcommand{\PR}{\mathds{P}}
\newcommand{\Aut}{\operatorname{Aut}}
\newcommand{\spec}{\operatorname{Spec}}
\newcommand{\MO}{\mathcal{O}}
\newcommand{\OH}[1]{\widehat{#1}}
\newcommand{\OS}[1]{\widetilde{#1}}
\newcommand{\kk}[1]{\overline{#1}}
\newcommand{\kdr}[2]{ {\Omega^1_{#1/#2}} }
\newcommand{\kd}[1]{ \kdr{#1}{k} }
\newcommand{\iso}{\cong}
\newcommand{\Ra}{\rightarrow}
\newcommand{\suml}{\sum\limits}
\newcommand{\prodl}{\prod\limits}
\newcommand{\liml}{\lim\limits}
\newcommand{\red}[1]{#1_{\operatorname{red}}}
\newcommand{\supp}{\operatorname{supp}}
\newcommand{\vp}{\varphi}
\newcommand{\I}{\mathcal{I}}
\newcommand{\del}{\partial}
\newcommand{\Hom}{\operatorname{Hom}}
\newcommand{\lRa}{\longrightarrow}
\newcommand{\id}{\operatorname{id}}
\newcommand{\SAut}{\mathcal{A}ut}
\newcommand{\AQ}{Q}
\newcommand{\SHom}{\mathcal{H}om}
\newcommand{\NOR}{\mathcal{N}}
\newcommand{\SL}{\mathcal{L}}
\newcommand{\maxl}{\max\limits}
\newcommand{\F}{\mathcal{F}}
\newcommand{\U}{\mathcal{U}}
\newcommand{\X}{\mathcal{X}}
\newcommand{\rank}{\operatorname{rank}}
\newcommand{\cdef}{\operatorname{Def}}
\newcommand{\Def}{\operatorname{Def}}
\newcommand{\Iso}{\operatorname{Iso}}
\newcommand{\CEQs}[1]{\operatorname{CEQ}(#1)}
\newcommand{\set}[2]{\{ #1\;|\;#2 \}}
\newcommand{\limindo}[1]{\lim\limits_{\underset{#1}{\longrightarrow}}}  
\newcommand{\condS}{(S)}
\begin{document}
\selectlanguage{english}

\title{On taut singularities in arbitrary characteristics}
\author{Felix Schüller}
\address{Mathematisches Institut\\ Heinrich-Heine-Universität\\ D-40225 Düsseldorf\\ Deutschland}
\email{schueller@math.uni-duesseldorf.de}
\subjclass[2010]{14J17, 14B05, 14B10}
\date{\today}

\begin{abstract}
Over $\C$,  Henry Laufer classified all taut surface singularities. We adapt and extent his transcendental methods to
positive characteristic. With this we show that if a normal surface singularity is taut over $\C$, then the normal surface singularities
with isomorphic dual graph over algebraically closed fields of characteristic exponent $p>1$ are taut for all but finitely many $p$.
We conjecture that this is actually ``if and only if''.
\end{abstract}

\maketitle

\tableofcontents

\section*{Introduction}

Let $A$ be a normal, two-dimensional ring. One knows that $\spec(A) =S$ has at most isolated singularities and a
desingularization.
According to Laufers definition a normal two-dimensional singularity
is called \emph{taut} if every other normal two-dimensional singularity with isomorphic dual graph is already equivalent to $S$.
Recall that the dual graph $\Gamma$ for $S$ encodes the intersection of the regular components of the exceptional divisor of
the minimal good desingularization of $S$.

Over $\C$, Laufer classified all taut singularities by their dual graph \cite{MR0333238}. Over an arbitrary algebraically
closed field $k$ no such classification is known. The only known results in positive characteristic are the calculations of
Michael Artin on classes of ADE-singularities depending on the characteristic \cite{MR0450263}
and a recent proof of tautness for all Hirzebruch-Jung-singularities by Yongnam Lee and Noboru Nakayama \cite{ArX1103.5185}.

We follow Laufers approach over $\C$ (which uses transcendental methods) and prove many of his results for schemes over
arbitrary algebraically closed fields. With this we get the following theorem:
\begin{thm_taut_impl_taut}
Let $S_1$ be a normal two-dimensional singularity over $\C$ with dual graph $\Gamma$. For all primes $p$ let
$S_p$ be a singularity over an algebraically closed field of characteristic $p$ with dual graph $\Gamma$.
If $S_1$ is taut, then $S_p$ is taut for all but finitely many $p$.
\end{thm_taut_impl_taut}
We also conjecture that we have ``if and only if'' in this theorem. 
For the non-taut ADEs we can show that the number of non isomorphic singularities with a given dual graph $\Gamma$ is
$h^1(P,\Theta_P)+1$, where $P$ is the plumbing scheme for $\Gamma$.

\subsection*{Acknowledgments}

This article consists of a part of my Ph.D. thesis \cite{thesis_fschueller}. I like to thank my advisor Stefan Schröer 
for suggesting this interesting topic and for many helpful discussions, comments and suggestions during my work.
I would also like to thank Christian Liedtke, Phlipp Gross, Sasa Novakovic and Holger Partsch  for many helpful discussions.
Concerning the computations I like to thank Achim Schädle and Bertold Nöckel for the possibility to do some of them
on machines with a sufficient amount of memory.

\section{Tautness and cycles supported on the exceptional locus}

First we need to fix some notation and recall a few definitions. Whenever we write $k$ or $p$ without further specifications,
it is an arbitrary algebraically closed field, and $p$ is its characteristic exponent.

We say $S$ is a \emph{normal two-dimensional singularity}
if $S$ is the spectrum of a complete, normal, noetherian, local $k$-algebra $\MO_{S,s}$ with closed point $s$,
residue field $k$ and $\dim(S) =2$. Thus $\MO_{S,s}=k[ [ x_1,\dots,x_r]]/\mathfrak{a}$.

An \emph{algebraization} of $S$ is a noetherian, normal, local $k$-algebra $A$ of finite type with $\MO_{S,s} \iso \OH{A}$.
Theorem 4.7 of \cite{MR0262237} guaranties the existence of an algebraization for every normal two-dimensional singularity.
As usual for the classification of singularities, we only work with spectra of complete local rings. But for technical reasons
at some points we need the algebraization, because we need the desingularization to be smooth over $k$.

For a  normal two-dimensional singularity we have always a good desingularization. That is a desingularization such that
the integral components of the  exceptional divisor are regular and intersect transversally with no three distinct
components meet at one point. For all this good desingularizations exist a minimal one, that is one such that every other
good desingularization factors through it (take the one with the smallest number of components).
We call every $Z = \suml_{l=1}^n n_l E_l$ a \emph{cycle supported on the exceptional locus} if $\red{Z}$ is the reduction of the
exceptional locus of a minimal good desingularization of a normal two-dimensional singularity.
Later we also need the definition of the dual graph for these cycles, also we need more decorations as usually used:
\begin{defin}\fslabel{def_dual_graph}
Let $Z = \suml_{l=1}^n n_lE_l$ be a closed, 1-dimensional subscheme of a regular, two-dimensional scheme,
such that $Z$ is projective over $k$ and the $E_l$ are regular.
The \emph{dual graph} $\Gamma_Z$ of $Z$ is the following graph with multiple edges but without loops:
\begin{itemize}
     \item For each $E_l$ we have a vertex $v_l$.
     \item For $i \not= l$ we have $E_l \cdot E_i$ edges $e_{l,i}^j$ between $v_l$ and $v_i$.
     \item Each vertex $v_l$ is decorated by three weights: the arithmetic genus $p_a(E_l)$, the multiplicity $n_l$ and
	  the self-intersection $E_l^2$.
\end{itemize}
We say that two dual graphs are isomorphic if we have a bijection $\vp_v$ between the sets of vertices respecting the decorations
and a bijection $\vp_e$ between the sets of edges such that $e_{l,i}^j$ is mapped to an edge connecting $\vp_v(v_l)$ and $\vp_v(v_i)$.
\end{defin}
Let $E$ be the reduction of the exceptional divisor of the minimal good desingularization of a normal two-dimensional singularity $S$.
Then $E$ fulfils the assumptions of the previous definition, and we call $\Gamma_E$ the \emph{dual graph for $S$},
or we say $S$ is a \emph{$\Gamma_E$-singularity}.
$S$ is called \emph{taut} if $S$ is isomorphic to any other normal two-dimensional singularity with isomorphic dual graph.

The following lemma gives a criterion for two normal two-dimensional singularities to be isomorphic in term of direct systems
of cycles supported on the exceptional locus. It seems to be well-know, but we found no references for it, so we give a short proof.
\begin{leme}\fslabel{lem_equal_if_div_thickings}
Let $S_i$ be two normal two-dimensional singularities with minimal good desingularizations $f_i: X_i \Ra S_i$
and let $E_{i,l}$ be the integral components of the exceptional divisors.
Further let $(n_{1,j},. . ., n_{n,j})_{j \in \N}$ be a sequence
with $n_{l,j+1}\ge n_{l,j}$ and $\liml_{j \Ra \infty} n_{l,j}=\infty$ for all $l$.
Then $S_1$ is isomorphic to $S_2$ if and only if we have an isomorphism of direct systems
\[Z_{1,j} = \suml_{l=1}^n n_{l,j}E_{1,l} \iso \suml_{l=1}^n n_{l,j}E_{2,l}= Z_{2,j}\]
of schemes.
\end{leme}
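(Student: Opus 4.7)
The plan is to prove the two implications of the lemma separately.

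For the forward direction, I would use that any isomorphism $\psi\colon S_1 \iso S_2$ lifts, by the universal property of the minimal good desingularization, to an isomorphism $\vp\colon X_1 \iso X_2$ over $\psi$: indeed $f_2$ composed with $\psi^{-1}$ is a good desingularization of $S_1$, so minimality of $f_1$ factors it through $X_1$, and symmetry produces the inverse. Since $\vp$ must carry the reduced exceptional divisor of $X_1$ onto that of $X_2$, it restricts to isomorphisms of the thickenings $Z_{1,j} \iso Z_{2,j}$ that are compatible with the inclusions $Z_{i,j} \incl Z_{i,j+1}$, yielding the required isomorphism of direct systems.

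For the converse, the main tool is the theorem on formal functions. Since $S_i$ is normal we have $f_{i*}\MO_{X_i} = \MO_{S_i}$, and the theorem on formal functions therefore supplies
\[
\MO_{S_i,s_i} \,\iso\, \varprojlim_{j}\, H^0\!\big(Z_{i,j},\, \MO_{Z_{i,j}}\big),
\]
the left-hand side being already complete by the very definition of a normal two-dimensional singularity. An isomorphism of direct systems $Z_{1,j} \iso Z_{2,j}$ then produces, after taking global sections, an isomorphism of the corresponding inverse systems of $k$-algebras, and passing to the limit gives an isomorphism of complete local rings $\MO_{S_1,s_1} \iso \MO_{S_2,s_2}$, i.e.\ $S_1 \iso S_2$.

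The main obstacle, to be handled before invoking the theorem on formal functions, is the cofinality of the family $\{Z_{i,j}\}_j$ among the standard infinitesimal neighborhoods of the exceptional fiber defined by $\mathfrak{m}_{S_i}^N \MO_{X_i}$. Concretely, one needs that the ideal sheaves $\MO_{X_i}(-Z_{i,j})$ and $\mathfrak{m}_{S_i}^N \MO_{X_i}$ are mutually cofinal in $\MO_{X_i}$. One inclusion is routine: $\mathfrak{m}_{S_i}\MO_{X_i}$ vanishes to some positive order along every exceptional component, so for $N$ large enough $\mathfrak{m}_{S_i}^N \MO_{X_i} \subset \MO_{X_i}(-Z_{i,j})$ for any fixed $j$. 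The reverse inclusion is where the hypothesis $\liml_{j \to \infty} n_{l,j} = \infty$ enters crucially: it ensures that for every $N$ the cycle $Z_{i,j}$ eventually dominates the subscheme cut out by $\mathfrak{m}_{S_i}^N \MO_{X_i}$. Once this cofinality has been established, the remainder of the argument is pure diagram chasing in inverse and direct limits.
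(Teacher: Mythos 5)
Your proposal is correct and follows essentially the same route as the paper: the theorem on formal functions together with normality recovers $\MO_{S_i,s_i}$ from the direct system, and the key point in both arguments is the mutual cofinality of the cycles $Z_{i,j}$ with the infinitesimal neighbourhoods cut out by $\mathfrak{m}_{S_i}^N\MO_{X_i}$, which is exactly where noetherianness and $\liml_{j\Ra\infty} n_{l,j}=\infty$ enter. Your forward direction is in fact slightly more explicit than the paper's, since you spell out the lifting of the isomorphism to the minimal good desingularizations via their universal property.
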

\begin{proof}
First, if $S_1$ and $S_2$ are isomorphic, the direct systems of schemes
$X_i \otimes \MO_{S_i,s_i}/m_{s_i}^{l+1}$ ($i=1,2$, $l \ge 0$) are isomorphic. If on the other hand those systems are isomorphic,
we get an isomorphism of $S_1$ and $S_2$ using the theorem on formal functions and the normality of $\MO_{S_i,s_i}$.

Now the $X_i$ are noetherian and thus for every $\suml_{l=1}^nn_{l,j} E_{i,l}$ we find $r$, $r'$ and $j'$ such that
\[X_i \otimes \MO_{S_i,s_i}/m_{s_i}^{r} \subset \suml_{l=1}^nn_{l,j} E_{i,l} \subset X_i \otimes \MO_{S_i,s_i}/m_{s_i}^{r'} \subset
\suml_{l=1}^nn_{l,j'} E_{i,l}.\]
This chain shows, that the systems $X_i \otimes \MO_{S_i,s_i}/m_{s_i}^{l+1}$ are isomorphic
iff the systems $Z_{i,j}$ are, thus we get the first claim.
%
\end{proof}
We now want to use this to decide whether a singularity is taut. For this we need some additional notation:
Let $Z = \suml_{l=1}^n n_lE_l$ on $X$ and $Z' = \suml_{l=1}^{n'} n_l'E'_l$ on $X'$ be two closed, one-dimensional subscheme of 
regular, two-dimensional schemes, such that $Z$ and $Z'$ are projective over $k$ and the $E_l$, $E'_l$ are regular.
We say that $Z$ and $Z'$ are \emph{combinatorially equivalent} if their dual graphs are isomorphic.

We say that $Z$ is \emph{defined by its dual graph} if every $Z'$ combinatorially equivalent to $Z$
is already isomorphic to $Z$ as scheme.

By $\CEQs{Z}$ we denote the set of all tuple $(Z',X')$ where $X'$ is a regular,
two-dimensional $k$-scheme and $Z'\subset X'$ is combinatorially equivalent to $Z$, divided by the equivalence relation given by
$ (Z',X') \sim (Z'',X'')$ iff $Z'$ is isomorphic to $Z''$ as $k$-schemes.
Then $Z$ is defined by its dual graph if and only if $\CEQs{Z} =\{[(Z,X)]\}$.

With this definition Lemma \ref{lem_equal_if_div_thickings} shows that a normal two-dimensional singularity is taut,
if all $Z_j$ are defined by their dual graphs.
The reverse of this is more delicate. Suppose we have a $Z_j$ and find a $Z'$ combinatorial equivalent,
but not isomorphic. Then we get a whole system of schemes, combinatorial equivalent, but not isomorphic.
We know (by definition) that $Z'$ is embedded in a regular, two-dimensional scheme
$X'$. But it is well known, that if we contract $Z' \subset X'$ we may only get an algebraic space.

We now want to show, that we can contract $Z' \subset X'$ as a scheme, if we modify $X'$ away from $Z'$:
\begin{leme}\fslabel{lem_contr_scheme}
Let $Z = \suml_{l=1}^n n_lE_l$ be a closed, one-dimensional subscheme of a regular, two-dimensional scheme $X$,
such that $Z$ is projective over $k$ and the $E_l$ are integral. If $\red{Z}$ satisfies the conditions
of the exceptional divisor of a minimal good desingularization, then $Z$ is the exceptional divisor of a minimal good
desingularization, that is, there exists a normal two-dimensional singularity $S'$
with minimal good desingularization $f': X' \Ra S'$ and an embedding $\iota: Z \Ra X'$ with $f'(\iota(Z))=s'$.
\end{leme}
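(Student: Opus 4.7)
The plan is to first contract $\red{Z}$ inside $X$ in the category of algebraic spaces, use the completion at the image point to obtain a genuine scheme via Artin's algebraization theorem, and then verify that the minimal good desingularization of this scheme accommodates $Z$ as an embedded subscheme.

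Since $\red{Z}$ satisfies the conditions of being the exceptional divisor of a minimal good desingularization, the intersection matrix $(E_l \cdot E_i)$ is negative definite by Mumford's criterion, whose proof is purely intersection-theoretic and hence valid in arbitrary characteristic. By Artin's contractibility theorem for algebraic spaces, there is a proper birational morphism $g : X \Ra T$ onto a normal, two-dimensional algebraic space $T$ which is an isomorphism away from $\red{Z}$ and contracts $\red{Z}$ to a closed point $t \in T$. The completion $R := \OH{\MO_{T,t}}$ is a complete, noetherian, normal local $k$-algebra of dimension two with residue field $k$. By Theorem 4.7 of \cite{MR0262237}, $R$ is the completion of some noetherian normal local $k$-algebra $A$ of finite type; put $S' := \spec(A)$ with closed point $s'$, which is a normal two-dimensional singularity in the sense introduced above.

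Let $f' : X' \Ra S'$ be its minimal good desingularization with reduced exceptional divisor $E'$. Since $T$ and $S'$ share the same completed local ring $R$ at their singular points, and since the minimal good desingularization is compatible with completion along the singular point, the formal completions $\OH{X}_{\red{Z}}$ and $\OH{X'}_{E'}$ are both canonically isomorphic to the minimal good desingularization of $\spec(R)$ viewed as a formal scheme. Under this identification, the closed subscheme $Z = \suml_{l=1}^n n_l E_l$, which lives inside $\OH{X}_{\red{Z}}$, transfers to a closed subscheme of $\OH{X'}_{E'}$ supported on $E'$. Since closed subschemes of $X'$ supported on the proper subscheme $E'$ correspond bijectively to closed subschemes of the formal completion supported on $E'$, we obtain the desired closed embedding $\iota : Z \incl X'$ with $f'(\iota(Z)) = s'$. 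Because $\red{Z}$ carries the combinatorial data of the exceptional divisor of a minimal good desingularization, $f'$ is indeed minimal and good.

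The main obstacle is the contraction step: one must invoke Artin's theorem on contractions in the category of algebraic spaces and check that its hypotheses, in particular negative definiteness, are met from purely combinatorial data on $\red{Z}$. A secondary technical point is verifying that the minimal good desingularization commutes with formal completion at the singular point, which is what permits the canonical identification of the two formal neighborhoods and the transfer of the scheme structure of $Z$ from $X$ to $X'$.
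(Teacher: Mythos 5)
Your first half coincides with the paper's: contract via Artin's theorem to an algebraic space, pass to the complete local ring at the image point (implicitly through an \'etale chart, as in Knutson) to land back in schemes. The second half diverges, and this is where the gap sits. You invoke an abstract minimal good desingularization $f':X'\Ra S'$ and then assert that $\OH{X}_{\red{Z}}$ and $\OH{X'}_{E'}$ ``are both canonically isomorphic to the minimal good desingularization of $\spec(R)$ viewed as a formal scheme.'' That identification is the entire content of the lemma at this point, and it is not free: it requires (i) that forming the minimal good desingularization commutes with completion $A\Ra\OH{A}$, and (ii) a uniqueness statement for minimal good desingularizations over the complete local ring $\spec(R)$, so that the two a priori different regular models $X\times_T\spec(R)$ and $X'\times_{S'}\spec(R)$ can be matched. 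Both are true in dimension two (via factorization of birational maps of regular surfaces and Lipman's resolution for excellent schemes), but you assert them rather than prove or cite them. The paper sidesteps this comparison entirely by \emph{constructing} $X'$ as the fibre product $X\times_S\spec(\OH{\MO}_{U,s})$ (a scheme by Knutson's Proposition II 1.7), so that $X'$ is by construction a base change of $X$; minimality and goodness are then read off from the fact that the reduced exceptional fibre is $\red{Z}$, and the embedding of $Z$ follows from associativity of the fibre product once one notes $Z$ lives in a finite infinitesimal neighbourhood of $\red{Z}$. If you want to keep your route, you must supply the uniqueness/compatibility argument; otherwise the cleaner move is to build $X'$ by base change as the paper does.

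Two smaller points. First, with the paper's conventions a normal two-dimensional singularity is the spectrum of a \emph{complete} local ring, so you should set $S'=\spec(\OH{A})=\spec(R)$ rather than $S'=\spec(A)$; the detour through Artin's algebraization theorem is not needed for the existence of $S'$ here. Second, your appeal to ``Mumford's criterion'' for negative definiteness is circular as stated: Mumford's result derives negative definiteness from $\red{Z}$ actually being the exceptional divisor of a resolution, which is what you are trying to produce. Negative definiteness has to be taken as part of the hypothesis ``$\red{Z}$ satisfies the conditions of the exceptional divisor of a minimal good desingularization'' in order to apply Artin's contractibility criterion at all.
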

\begin{proof}
By Corollary (6.12) of \cite{MR0260747}  we have a contraction $f: X \Ra S$ of $Z$ with $S$ an algebraic space and $s=f(Z)$.
Then by Theorem II 6.4 of \cite{MR0302647} we have an affine scheme $U$ and an étale map $U \Ra S$
such that the embedding $s \Ra S$ factors $s \Ra U \Ra S$.
We may assume $U$ to be normal. We take the fibre product of algebraic spaces
$X' =X \times_{S} \spec(\OH{\MO}_{U,s})$. Now $S'= \spec(\OH{\MO}_{U,s})$ is a scheme, and by Proposition II 1.7 of \cite{MR0302647}
we know that the fibre product of two schemes over an algebraic space is a scheme, so $X'$ is a scheme. Let $s'$ be the closed point of
$S'$. Then we know that $S'$ is a normal two-dimensional singularity and $X'$ is regular.
Because the reduction of the exceptional fibre of $f': X' \Ra S'$ is
$\red{Z}$, we know that $f'$ is the minimal good desingularization of $S'$.

It remains to prove the existence of $\iota$. First we remark that by the same argumentation as above we get
$Z \subset X\otimes \spec(\OH{\MO}_{U,s}/m^{i+1})$ for an $i$ large enough.
But by definition we have $\OH{\MO}_{U,s}/m^{i+1} = \MO_{S',s'}/m_s^{i+1}$, and so the associativity of the fibre product gives us
\[Z \subset X\otimes \spec(\OH{\MO}_{U,s}/m^{i+1}) \iso X' \otimes \spec(\MO_{S',s'}/m_s^{i+1})\]
and this gives the wanted $\iota: Z \Ra X' $.
\end{proof}
The $Z_{i,j}$ of Lemma \ref{lem_equal_if_div_thickings} may also be calculated on the minimal good desingularization of
any algebraization of $S$. Thus for tautness the lemma can be restated as:
\begin{prop}\fslabel{prop_taut_iff_all_comb_equiv_iso}
Let $S$ be a normal two-dimensional singularity. Let $f$ be the minimal good desingularization of $S$ or of any algebraization of
$S$ and let $E_l$ be the $n$ integral components of its exceptional divisor.
Let $(n_{1,j},. . ., n_{n,j})_{j \in \N}$ be a sequence with $n_{l,j+1}\ge n_{l,j}$ and
$\liml_{j \Ra \infty} n_{l,j}=\infty$ for all $l$. We set $Z_j = \suml_{l=1}^n n_{l,j} E_l$.
Then $S$ is taut if, and only if for all $j$ the $Z_j$ are defined by their dual graphs.
\end{prop}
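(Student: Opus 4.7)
The plan is to prove both directions of the equivalence, using Lemma \ref{lem_equal_if_div_thickings} as the bridge and Lemma \ref{lem_contr_scheme} as the geometric tool for the easier direction.

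For the forward direction I argue contrapositively. Suppose some $Z_{j_0}$ is not defined by its dual graph, so there exists $Z'$, combinatorially equivalent to $Z_{j_0}$ (embedded in a suitable regular two-dimensional $k$-scheme), that is not scheme-isomorphic to $Z_{j_0}$. Since $\red{Z'}$ carries the same decorated dual graph as the reduced exceptional divisor of $S$, Lemma \ref{lem_contr_scheme} produces a normal two-dimensional singularity $S'$ whose minimal good desingularization $X' \Ra S'$ has $Z'$ embedded as a thickening of its reduced exceptional fibre. The dual graphs of $S$ and $S'$ then coincide, so if $S$ were taut we would have $S \iso S'$. Applying Lemma \ref{lem_equal_if_div_thickings} to the direct systems built on both sides from the given sequence $(n_{l,j})$ (via the component correspondence induced by the dual graph isomorphism) yields an isomorphism of direct systems, whose $j_0$-th term gives $Z_{j_0} \iso Z'$ --- contradicting the choice of $Z'$.

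For the backward direction, assume every $Z_j$ is defined by its dual graph and let $S'$ be any singularity with the same dual graph as $S$. Let $E_l'$ be the components of the reduced exceptional divisor of its minimal good desingularization, identified with $E_l$ via a chosen isomorphism of dual graphs, and set $Z_j' := \suml_l n_{l,j} E_l'$. These are combinatorially equivalent to $Z_j$, so by hypothesis there exist scheme isomorphisms $Z_j \iso Z_j'$ for every $j$. The remaining task is to assemble these into a compatible family $\phi_j : Z_j \Ra Z_j'$ satisfying $\phi_{j+1}|_{Z_j} = \phi_j$; once this is done, Lemma \ref{lem_equal_if_div_thickings} gives $S \iso S'$, proving tautness.

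This coherent lifting is the main obstacle, because an arbitrary isomorphism $Z_{j+1} \iso Z_{j+1}'$ need not restrict to an isomorphism of the distinguished subschemes $Z_j \subset Z_{j+1}$ and $Z_j' \subset Z_{j+1}'$ (the bijection of components it induces may respect the decorations at level $j+1$ but permute those at level $j$). I plan to handle this by a Mittag-Leffler argument on the Isom-schemes $I_j := \underline{\Iso}_k(Z_j, Z_j')$: each $I_j$ is of finite type over $k$ because $Z_j$ and $Z_j'$ are proper, and is non-empty by the previous paragraph. Restriction along $Z_j \incl Z_{j+1}$ defines morphisms $I_{j+1} \Ra I_j$. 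The decreasing chain of scheme-theoretic image closures in $I_j$ stabilizes by Noetherianness, and after passing to the stabilized closed subschemes the transition maps become dominant between non-empty finite-type $k$-schemes; a standard inverse-limit argument then produces a compatible sequence $(\phi_j) \in \varprojlim I_j(k)$. The subtle point is to verify that these stabilized images remain non-empty at every level, which uses essentially that $Z_j$ is defined by its dual graph for \emph{every} $j$, not merely for cofinally many.
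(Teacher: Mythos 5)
Your overall strategy is exactly the paper's: the forward direction by contracting the ambient surface of $Z'$ via Lemma \ref{lem_contr_scheme} to manufacture a second singularity with the same dual graph and then invoking tautness together with Lemma \ref{lem_equal_if_div_thickings}, and the backward direction by feeding the $Z_j$ into Lemma \ref{lem_equal_if_div_thickings}. The paper in fact treats the backward direction as an immediate consequence of that lemma, so the compatibility problem you isolate --- that levelwise isomorphisms $Z_j \iso Z_j'$ need not assemble into an isomorphism of \emph{direct systems} --- is a point the paper passes over in silence; you are right that Lemma \ref{lem_equal_if_div_thickings} demands a compatible family and that this deserves an argument.

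Your Mittag--Leffler repair, however, does not work as written, for two concrete reasons. First, as you yourself observe one sentence earlier, an isomorphism $Z_{j+1} \Ra Z_{j+1}'$ need not carry $Z_j$ into $Z_j'$ (the induced permutation of components preserves the level-$(j+1)$ multiplicities but perhaps not the level-$j$ ones), so restriction does \emph{not} define a morphism $I_{j+1} \Ra I_j$ on all of $I_{j+1}$; you must pass to the open and closed locus of $I_{j+1}$ where the induced component bijection respects the level-$j$ multiplicities and show that locus is non-empty, which is not automatic from $Z_{j+1}$ being defined by its dual graph. Second, stabilization of the closures of the images together with dominance of the induced maps is not sufficient to produce a point of the inverse limit: over a countable algebraically closed field --- e.g.\ the algebraic closure of a finite field, which is precisely the case of interest in this paper --- a decreasing chain of dense open subsets of $\A^1_k$ has dominant transition maps and stabilized closures, yet the intersection of the sets of $k$-points can be empty. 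To rescue the argument one should use that $I_j = \Iso_k(Z_j,Z_j')$ is a torsor under $\Aut_k(Z_j)$, so that the image of each transition map is a coset of the image of a homomorphism of group schemes of finite type and hence \emph{closed}; the eventual images are then a descending chain of non-empty closed subsets, stabilize by Noetherianness, and surject onto one another, which does yield a compatible family of $k$-points and completes the backward direction.
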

Our next goal is to give a necessary condition on the structure of the dual graph for a normal two-dimensional singularity to be taut.
For this we need to discuss some special cycles supported on the exceptional locus.
Recall that for a normal two-dimensional singularity by \cite{MR0199191}, Page 131
we have the \emph{fundamental cycle}, that is the smallest divisor $Z$ on $X$ with $\supp(Z)=\supp(E)$ and $Z\cdot E_i \le 0$
for all $i$. By Lemma 4.10 of \cite{MR0320365} we also know, that we find at least one cycle with strict inequality for all $i$.
We call such a cycle an \emph{anti-ample cycle for $S$}, because by \cite{MR0276239}, Theorem 12.1 (iii) the negative of it is ample.
Because the coefficients of an anti-ample cycle only depend on the dual graph $\Gamma$ of $S$, we may also speak of an
\emph{anti-ample cycle for $\Gamma$}.
If we have $p>1$ we sometimes need an anti-ample cycle with coefficients prime to $p$.
We get the existence of such anti-ample cycles from the following lemma:
\begin{leme}\fslabel{lem_prim_anti_ample_cycle}
Let $S$ be a normal two-dimensional singularity, then we always have an anti-ample cycle
$\OS{Z} = \suml_{l=1}^n n_l E_l$ for $S$ such that $\gcd(n_l,p)=1$ for all $l$.
\end{leme}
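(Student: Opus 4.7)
The approach is to take any anti-ample cycle for $S$ (one exists by Lemma 4.10 of \cite{MR0320365}, as recalled above), rescale it heavily, and then perturb each coefficient by a bounded amount in order to avoid divisibility by $p$ in every component. The case $p = 1$ is vacuous, so I would assume $p$ is a prime throughout.

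First I would fix an anti-ample cycle $Z_0 = \suml_{l=1}^n m_l E_l$. For any positive integer $N$ and any index $l$, the $p \ge 2$ residues in $\{0,1,\dots,p-1\}$ cannot all agree with $-N m_l \pmod{p}$, so I can pick $c_l \in \{0,1,\dots,p-1\}$ with $N m_l + c_l$ coprime to $p$. Defining $\OS{Z} := N Z_0 + \suml_l c_l E_l = \suml_l n_l E_l$, every coefficient $n_l = N m_l + c_l$ is then coprime to $p$ by construction (and remains positive). It only remains to choose $N$ so that $\OS{Z}$ is still anti-ample.

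The quantitative step is to set $M := \maxl_i \suml_l |E_l \cdot E_i|$ and bound
\[\OS{Z} \cdot E_i \;=\; N (Z_0 \cdot E_i) + \suml_l c_l (E_l \cdot E_i) \;\le\; N (Z_0 \cdot E_i) + (p-1) M\]
for every $i$. Since $Z_0$ is anti-ample we have $Z_0 \cdot E_i \le -1$, so the right-hand side is strictly negative as soon as $N > (p-1) M$, and any such $N$ yields the desired cycle. The main, if modest, obstacle is simply recognising that anti-ampleness is stable under this kind of bounded additive perturbation once $Z_0$ has been rescaled enough; with the intersection numbers $E_l \cdot E_i$ fixed integers and the corrections $c_l$ uniformly bounded by $p-1$, no further difficulty arises.
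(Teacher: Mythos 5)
Your proof is correct and follows essentially the same strategy as the paper: rescale an anti-ample cycle by a sufficiently large integer and then perturb the coefficients by a bounded additive correction to make them coprime to $p$, using the fact that the fixed intersection numbers control how much anti-ampleness can degrade. The paper scales by $t+1$ with $t$ a bound on the cross-intersections and adds exactly $1$ to each coefficient divisible by $p$, whereas you scale by $N>(p-1)M$ and allow corrections up to $p-1$; the constants differ but the argument is the same.
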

\begin{proof}
For $p=1$ there is nothing to show. For $p>1$ let $\OS{Z}'$ be any anti-ample cycle for $S$ and let
$t=\max\limits_{i} \{E_i \cdot (E_1+\dots +\OH{E_{i}}+\dots +E_n)\}$. 
We write $(t+1)\OS{Z}' = \suml_{l=1}^n n'_l E_l$, and define $n_l$ by $n_l = n'_l+1$ if
$p | n'_l$ and $n_l=n'_l$ else and set $\OS{Z} =\suml_{l=1}^n n_l E_l$.
A calculation shows that $\OS{Z}$ is anti-ample.
\end{proof}
Also we need that for ever suitable dual graph $\Gamma$ and every appropriate choice of curves $E_l$ we find a
$\Gamma$-singularity $S$ such that the exceptional locus consists of the $E_l$. The first step for this is the following proposition:
\begin{prop}\fslabel{prop_real_exist_for_graph}
For any connected dual graph $\Gamma$ with negative definite $(E_i \cdot E_j)$, and any $n$ smooth, one-dimensional schemes $E_l$
with $p_a(E_l)$ as in $\Gamma$, we can embed
$Z=\suml_{l=1}^n n_l E_l$ into a smooth, two-dimensional scheme $X$ such that the dual graph of $Z$ is $\Gamma$.
\end{prop}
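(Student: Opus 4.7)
The plan is to construct $X$ by the classical plumbing procedure: take total spaces of appropriate line bundles over the $E_l$, then glue along neighborhoods of chosen points to realize the prescribed intersections.

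First, for each vertex $v_l$ of $\Gamma$ I would choose a line bundle $L_l$ on $E_l$ of degree $E_l^2$ (which exists since $E_l$ is a smooth curve over $k$) and let $V_l$ be its total space. This is a smooth two-dimensional $k$-scheme containing $E_l$ as the zero section with self-intersection exactly $E_l^2$ and unchanged arithmetic genus $p_a(E_l)$.

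Second, for each edge $e_{l,i}^j$ of $\Gamma$ I would choose one closed point on $E_l$ and one on $E_i$, arranged so that on each individual curve the chosen points are pairwise distinct (possible because $k$ is algebraically closed and each $E_l$ is smooth). Around a chosen point $p \in E_l$ a Zariski-local trivialization of $L_l$ gives coordinates $(x,y)$ on an affine open $U \subset V_l$ with $E_l \cap U = V(y)$ and $p = (0,0)$. For each edge I would then glue the corresponding pair of such opens, after shrinking to discard other marked points, by the coordinate swap $x \leftrightarrow y'$, $y \leftrightarrow x'$. The result is a scheme $X$ (not merely an algebraic space) which stays smooth because every gluing is an isomorphism of smooth Zariski opens; locally at a glued origin the two curves $E_l$ and $E_i$ appear as the two coordinate axes, so they meet transversally, producing exactly $E_l \cdot E_i$ transverse intersection points, matching the edge count of $\Gamma$. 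Setting $Z = \sum_l n_l E_l$ then gives $\Gamma_Z \iso \Gamma$ by construction.

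The main obstacle is purely bookkeeping: verifying that the coordinate-swap gluing defines a morphism on overlap opens, that smoothness survives, and crucially that $E_l^2$ is preserved by the plumbing. The last point follows because the normal bundle $\MO_X(E_l)|_{E_l}$ is still $L_l$: the gluings only add transverse curves at chosen points and do not modify the normal bundle of $E_l$. Note that the negative definiteness hypothesis on $(E_i \cdot E_j)$ is not invoked in this embedding step; it becomes relevant only if one later wishes to contract $Z$ via Lemma \ref{lem_contr_scheme}.
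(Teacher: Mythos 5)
Your plan is the classical analytic plumbing, and the step that fails in the algebraic category is precisely the gluing. A Zariski-open neighbourhood $U$ of $p=(0,0)$ in the total space $V_l$ has coordinate ring $\MO(U_0)[y]$ for some affine open $U_0\subset E_l$ trivializing $L_l$; if $E_l$ has positive genus, $\MO(U_0)$ is not generated by a single function $x$, so ``$x\leftrightarrow y'$, $y\leftrightarrow x'$'' does not determine a ring homomorphism. Worse, no isomorphism of Zariski opens interchanging base and fibre directions can exist in that case: it would have to carry $E_l\cap U=V(y)$ into $V(x')$, whose component through $(q,0)$ is the fibre $\{q\}\times\A^1$ of $V_i$, i.e.\ it would identify an open subscheme of $E_l$ with an open subscheme of $\A^1$, which is impossible for $p_a(E_l)>0$. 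Since the proposition is applied in Lemma \ref{lem_necessary_cond_for_taut} exactly to components of positive genus (elliptic curves with distinct $j$-invariants, and $\FM_g$ for $g>1$), this case cannot be discarded. Even when every $E_l\iso\PR^1_k$ the argument is incomplete: Zariski opens are cofinite in the curves, so the swap identifies a cofinite subset of the zero section of $V_l$ with a cofinite subset of a fibre of $V_i$, and you would still need to verify that the glued object is separated, that each $E_l$ stays closed and proper with the intended isomorphism type, and that its normal bundle is still $L_l$ --- none of this is automatic from ``every gluing is an isomorphism of smooth Zariski opens''.

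The paper avoids plumbing the surface altogether. It first enlarges the cycle: choosing an anti-ample cycle $\OS{Z}$ (this is where negative definiteness enters, contrary to your closing remark) and attaching $r_i=-\OS{Z}\cdot E_i>0$ transversal arms to each $E_i$, it produces a configuration $Z'$ with $Z'\cdot E_i'=0$ for every component. Such a configuration satisfies the hypotheses of Proposition 4.2 of \cite{MR0357406} and is therefore a fibre of a fibration from a smooth surface; being a fibre forces all the self-intersections to be the ones prescribed by $\Gamma$, and restricting the ambient surface gives the desired embedding of $Z$. If you want to retain a plumbing-style construction, you must either restrict to rational components and write down explicit charts (as the paper does in Section \ref{sect_first_steps} for the plumbing scheme $P$), or glue formally/étale-locally --- but then you only obtain an algebraic space, and an additional argument is needed to produce a scheme.
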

\begin{proof}
First we note that it suffices to prove the proposition for one chosen $n$-tuple $(\OS{n}_1, \dots, \OS{n}_n)$ of natural numbers,
which may differ from the $n_l$ of $\Gamma$.
The main difficulty now is not to find a $X$ into which $Z$ embeds, but to find a $X$ such that ifor all $i$ the
$E_i^2$ equals to the self-intersection given by $\Gamma$.
To find this we use the following fact: Suppose we have a closed, one-dimensional subscheme
$Z'=\suml_{l=1} n'_lE'_l$ of a smooth, two-dimensional scheme $X'$ such that $Z'$ is the fibre of a map from $X'$ to a smooth,
one-dimensional scheme. Then we have $0 = Z' \cdot E'_i$ for all $E'_i$ 
and thus the $ (E'_i)^2 $ are controlled by the $n'_l$ and the $E'_i \cdot  E'_l$. 

We now choose $(\OS{n}_1, \dots, \OS{n}_n)$ such that $\OS{Z}=\suml_{l=1} \OS{n}_lE_l$ is an anti-ample cycle.
Then we construct a new $Z'$ from $\OS{Z}$ as follows: At every $E_i$ we choose $r_i=-\OS{Z} \cdot E_i$ points which are smooth in
$E$ and we glue additional smooth, one-dimensional schemes $E_{i,j}$ transversally to them such that $E_{i,j}$ only intersects
with $E_i$.

By construction, this $Z'$ fulfils the assumptions of Proposition 4.2 of \cite{MR0357406}, and can thus be embedded as a fibre 
into a smooth, two-dimensional scheme $X$. This induces the wanted embedding of $Z$.
\end{proof}
Now, together with Lemma \ref{lem_contr_scheme} we get:
\begin{cor}\fslabel{cor_sing_exist_for_graph}
For any connected dual graph $\Gamma$, with negative definite $(E_i \cdot E_j)$
and any $n$ smooth, one-dimensional schemes $E_l$ with $p_a(E_l)$ as in $\Gamma$, we have normal two-dimensional singularity $S$
with desingularization $f:X \Ra S$ such that $\Gamma$ is the dual graph of $\suml_{l=1}^n n_l E_l$ on the exceptional locus.
\end{cor}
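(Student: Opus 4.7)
The plan is to combine the two previous results in sequence. First I would apply Proposition \ref{prop_real_exist_for_graph} to the given graph $\Gamma$ and the prescribed smooth curves $E_l$: this yields an embedding of $Z = \suml_{l=1}^n n_l E_l$ into a smooth, two-dimensional scheme $X$ realizing the combinatorial data of $\Gamma$, in particular producing the correct self-intersection numbers $E_l^2$, the prescribed intersection multiplicities $E_l\cdot E_i$, and the prescribed arithmetic genera $p_a(E_l)$.

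Next I would check that $\red{Z} = \suml_{l=1}^n E_l$ on $X$ satisfies the hypotheses of Lemma \ref{lem_contr_scheme}, i.e. the conditions imposed on the exceptional divisor of a minimal good desingularization: the $E_l$ are regular and integral by construction, the intersection matrix $(E_l\cdot E_i)$ is negative definite by the assumption on $\Gamma$, components intersect transversally and no three components meet at a single point because the dual graph $\Gamma$ (having only multi-edges and no loops, cf.\ Definition \ref{def_dual_graph}) encodes precisely this combinatorial data and Proposition \ref{prop_real_exist_for_graph} delivers an embedding realizing it. Finally minimality follows from the fact that $\Gamma$ is, by hypothesis, the dual graph from a minimal good desingularization context; should any $E_l$ be a smooth rational $(-1)$-curve, this is already built into $\Gamma$ and preserved by the construction.

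Having verified the hypotheses, Lemma \ref{lem_contr_scheme} then produces a normal two-dimensional singularity $S$ together with a minimal good desingularization $f\colon X' \Ra S$ and an embedding $\iota\colon Z \Ra X'$ with $f(\iota(Z))=s$. Since the embedding $\iota$ is an isomorphism of $Z$ onto its image and preserves the combinatorial incidence data, the dual graph of $\iota(Z)$ on $X'$ coincides with that of $Z$ on $X$, which is $\Gamma$, and $\iota(Z)$ is supported on the exceptional locus of $f$. This proves the corollary.

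The only step that requires real attention is the second one — transferring the combinatorial input of $\Gamma$ into the geometric hypotheses needed by Lemma \ref{lem_contr_scheme}. The rest is a formal chaining of the two results.
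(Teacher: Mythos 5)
Your proposal is correct and matches the paper exactly: the corollary is stated in the paper with no separate proof beyond the remark that it follows from Proposition \ref{prop_real_exist_for_graph} together with Lemma \ref{lem_contr_scheme}, which is precisely the chaining you carry out. Your additional verification that $\red{Z}$ meets the hypotheses of Lemma \ref{lem_contr_scheme} is a reasonable (and slightly more careful) filling-in of that one-line deduction.
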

Now suppose we have a singularity with $n=1$ and $p_a(E_1)=1$.
Than $E_1$ is an elliptic curve, and the isomorphism type is described by the $j$-invariant. But the $j$-invariant is not encoded in
the dual graph. So if we take two elliptic curves with different $j$-invariants, then with Proposition \ref{prop_real_exist_for_graph}
we can embed both curves with a given negative self-intersection into smooth surfaces. Then this curves are combinatorially equivalent,
but not isomorphic. This implies that if we contract
these curves, the resulting singularities are not isomorphic, but have isomorph dual graphs, and thus are not taut.

This example generalises in the following way to the wanted necessary condition on $\Gamma$ for $S$ to be taut:
\begin{leme}\fslabel{lem_necessary_cond_for_taut}
Let $S$ be a normal two-dimensional singularity, $f:X \Ra S$ its minimal good desingularization, and $E_i$ the integral
components of the exceptional locus.
If $S$ is taut, then we have $p_a(E_i)=0$ and each $E_i$ intersects at most 3 others.
\end{leme}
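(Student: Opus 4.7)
The strategy is to prove the contrapositive: assuming either $p_a(E_i)\ge 1$ for some $i$, or that some $E_i$ meets at least four other components, I will construct two non-isomorphic normal two-dimensional singularities with dual graph $\Gamma$, contradicting tautness of $S$.

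First consider the case $p_a(E_i)\ge 1$. For every $g\ge 1$, smooth projective curves of genus $g$ form a positive-dimensional moduli, so there exist two non-isomorphic smooth curves $E_i$ and $E_i'$ of genus $p_a(E_i)$. Applying Corollary \ref{cor_sing_exist_for_graph} once with $E_i$ as the $i$-th component and once with $E_i'$, together with arbitrary fixed choices for the remaining components, yields two $\Gamma$-singularities. If these were isomorphic, then so would be their minimal good desingularizations, and in particular their $i$-th exceptional components would be isomorphic, contradicting the choice.

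Now assume that all $p_a(E_l)=0$ (otherwise the first case applies) but that some $E_i$ meets $r\ge 4$ other components. Then $E_i\cong\PR^1$ and those components meet $E_i$ in $r$ distinct points. Since $\PGL_2$ acts sharply $3$-transitively on $\PR^1$, the cross-ratio of any four of these points is a genuine isomorphism invariant of the configuration that is not recorded in $\Gamma$. I would pick two configurations of $r$ points on $\PR^1$ with different cross-ratios, complete each of them into a reduced configuration realizing $\Gamma$ by gluing the remaining $\PR^1$s transversally in the pattern dictated by $\Gamma$, and use Proposition \ref{prop_real_exist_for_graph} to embed each of them into a regular two-dimensional scheme with the self-intersections prescribed by $\Gamma$. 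Thickening by the multiplicities produces two cycles $Z$ and $Z'$ which are combinatorially equivalent but not isomorphic as schemes, because any isomorphism would have to map the unique component of valence $r$ to itself and the corresponding intersection subschemes among themselves, preserving their cross-ratio. Contracting via Lemma \ref{lem_contr_scheme} then gives two non-isomorphic $\Gamma$-singularities, as needed.

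The main obstacle, I expect, is the verification in the second case that the cross-ratio is a scheme-theoretic invariant of the embedded (thickened) cycle $Z$ and not merely of its reduction $\red{Z}$, so that two non-isomorphic point configurations really do give rise to non-isomorphic $Z$ and hence, after contraction, to non-isomorphic singularities. Once this point is secured, both cases run along the same lines and directly contradict tautness via Proposition \ref{prop_taut_iff_all_comb_equiv_iso}.
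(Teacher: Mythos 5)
Your proposal is correct and follows essentially the same route as the paper: for $p_a(E_i)>0$ one varies the modulus of the genus-$g$ component (the paper's $j$-invariant example and $\FM_g$), and for a component meeting four others one normalizes three intersection points to $0,1,\infty$ and varies the fourth, which is exactly your cross-ratio argument, followed by realization via Proposition \ref{prop_real_exist_for_graph} and contraction via Lemma \ref{lem_contr_scheme}. The obstacle you flag is not a real one: reduction is functorial, so an isomorphism of thickened cycles (or of singularities, via their minimal good desingularizations) induces an isomorphism of the reduced configurations, and the cross-ratio is already an invariant of the reduction.
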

\begin{proof}
First suppose by contradiction that we have an $i$ with $p_a(E_i)>0$. The case $p_a(E_i)=1$ is just a direct
generalisation of the example above. The general case for $g=p_a(E_i)>1$ follows analogously using the scheme $\FM_g$.

So we have necessarily $p_a(E_i)=0$ for all $i$. Because $k$ is algebraically closed, this is equivalent to $E_i \iso \PR^1_k$.

Now assume we have an $E_i$ which intersects with $4$ others.
We may assume that $3$ of the $4$ other components intersecting $E_i$ intersect at $0$, $1$ and $\infty$.
Now we take $E$ and $E'$, such that the 4-th component intersects $E_i$ at different points. Then we can again embed $E$ and $E'$,
and their dual graphs are isomorphic, but $E$ is not isomorphic to $E'$, and so $S$ is not taut.
\end{proof}
\begin{defin}
We say a  normal two-dimensional singularity is \emph{potentially taut} if its minimal good desingularization fulfils
the conclusion of the previous lemma.
We call a dual graph $\Gamma$ \emph{potentially taut} if it is the dual graph of a cycle supported on the exceptional locus of a
potentially taut normal two-dimensional singularity.
\end{defin}

\section{Extending isomorphisms of exceptional schemes}

The goal of this section is to show that, with the notation of Proposition \ref{prop_taut_iff_all_comb_equiv_iso}, a normal
two-dimensional singularity $S$ is taut iff for one $j_0 \gg 0$ the $Z_{j_0}$ 
is defined by its combinatorial data.

The main tool for this is a obstruction-theory, which in the analytic category, was developed by Grauert
(\cite{MR0137127}), Laufer (\cite{MR0320365}) and Tjurina (\cite{MR0246880}). It gives a criterion whether an isomorphism between
cycles $Z_{j}$ and $\OS{Z}_{j}$ can be extended to one between $Z_{j} +E_l$ and $ \OS{Z}_{j} + \OS{E}_l$. Laufer showed that,
starting with a special $Z_{j_0}$, one always finds a sequence of $E_l$ to add, such that the obstruction against the extending
to every $Z_{j}$ bigger is trivial.

Most of the results we need stay true in the algebraic category, the proofs only need small modifications, so we omit those.
The only thing that is more difficult in the algebraic category, is to prove that the isomorphism can be extended locally.

For this section we need to replace $S$ with an algebraization.
Then we know that the exceptional divisor is a local complete intersection in a smooth, two-dimensional $k$-scheme.

So, for this section, let $B=\suml_{l=1}^n n_l B_l$ be a divisor on a smooth $k$-surface $X$, with the $B_l$ regular,
and the singularities of $\red{B}$ are transversal intersections of at most two components. Further, let
$C= \suml_{l=1}^n n'_l B_l$ with $0 < n'_l \le n_l$.
Also we need analogous  $\OS{B}=\suml_{l=1}^n n_l \OS{B}_l \subset \OS{X}$ and $\OS{C}=\suml_{l=1}^n n'_l \OS{B}_l$.

First we want to show, that locally, we always are able to extent an isomorphism between $C$ and $\OS{C}$ to one between
$B$ and $\OS{B}$:
\begin{prop}\fslabel{prop_local_thicken}
Suppose that $\vp: C \Ra \OS{C}$ is the isomorphism.
Then for every $x \in B$ there exists an open $x \in U_x \subset B$ and an isomorphism $\psi: U_x \Ra \vp(U_x)$ such that
$\psi|_{C\cap U_x } = \vp|_{U_x}$.
\end{prop}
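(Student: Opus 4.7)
The plan is to reduce to a local computation by choosing a Zariski neighborhood $V\subset X$ of $x$ (and $\OS{V}\subset\OS{X}$ of $\tilde{x}:=\vp(x)$) on which $B$ has the normal-crossings form $B\cap V=\spec\MO(V)/(t^{n_l})$ at a smooth point of $\red{B}$, or $B\cap V=\spec\MO(V)/(s^{n_i}t^{n_j})$ at a node $B_i\cap B_j$, with the analogous form for $\OS{B}\cap\OS{V}$, and such that $\vp$ restricts to an isomorphism $C\cap V\to\OS{C}\cap\OS{V}$. These $V,\OS{V}$ exist because $X,\OS{X}$ are smooth and each $B_l$ is a Cartier divisor, hence locally principal; the candidate open is $U_x:=B\cap V$.

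To construct $\psi$ I would use formal smoothness of $\OS{V}$ over $k$. Viewing $\vp^*$ as a $k$-algebra map $\MO(\OS{V})\to\MO(V)/(t^{n'_l})$, I would lift it step by step through the surjections $\MO(V)/J_{m+1}\to\MO(V)/J_{m}$ with square-zero kernel, where the $J_m$ form a chain bridging $(t^{n'_l})$ down to $(t^{n_l})$ (respectively $(s^{n'_i}t^{n'_j})$ down to $(s^{n_i}t^{n_j})$). This yields a $k$-algebra map $\Psi\colon\MO(\OS{V})\to\MO(V)/(t^{n_l})$ extending $\vp^*$. Since $\vp$ carries the reduced component $\OS{B}_l$ to $B_l$, the element $T_0:=\vp^*(\tilde{t})$ generates the ideal $(t)/(t^{n'_l})$ in $\MO(V)/(t^{n'_l})$; after shrinking $V$ one writes $T_0=tu_0$ with $u_0$ a unit, so a suitable lift $\Psi(\tilde{t})$ is of the form $tu$ with $u$ a unit in $\MO(V)/(t^{n_l})$, giving $\Psi(\tilde{t}^{n_l})=t^{n_l}u^{n_l}=0$. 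Hence $\Psi$ descends to $\psi^*\colon\MO(\OS{V})/(\tilde{t}^{n_l})\to\MO(V)/(t^{n_l})$. The node case runs identically, using that $\vp$ permutes the two irreducible branches.

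To conclude, one verifies $\psi^*$ is an isomorphism by applying the five-lemma to the short exact sequences $0\to(\tilde{t}^{n'_l})/(\tilde{t}^{n_l})\to\MO(\OS{V})/(\tilde{t}^{n_l})\to\MO(\OS{V})/(\tilde{t}^{n'_l})\to 0$ and its analogue on the $V$ side: the right vertical map is $\vp^*$, an isomorphism, while on the left it acts by multiplication by the unit $u^{n'_l}$, also an isomorphism.

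The hard part will be the factorization $\vp^*(\tilde{t})=tu_0$ with $u_0$ genuinely a unit on all of $V$: it uses both that $\vp$ is an isomorphism (so that $(T_0)=(t)$ as ideals) and that it respects irreducible components (so the generator is literally $t$ up to a unit), and may force a further Zariski shrinkage. A companion subtlety is that the formal-smoothness lift is not unique, and in the boundary case $n'_l=1$ one may need to adjust the lift by a small $k$-derivation $\MO(\OS{V})\to(t)/(t^{n_l})$ so that $\Psi(\tilde{t})$ has the required shape; this is possible because $\Omega^1_{\MO(\OS{V})/k}$ is locally free of rank two.
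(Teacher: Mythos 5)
Your proposal follows essentially the same route as the paper: local normal-crossings coordinates, a lift of the ring map via the infinitesimal lifting property for the smooth ambient surface, an adjustment by a derivation so that the local equation maps to (a unit multiple of) the corresponding local equation, and then descent. The only real difference is the final verification that the descended map is an isomorphism — the paper runs the same construction for $\vp^{-1}$ and explicitly corrects the composite, while you invoke the five lemma on the ideal filtration (valid provided you apply it step by step, as you indicate) — and you correctly flag the two genuine subtleties, namely the unit factorization of $\vp^*(\tilde{t})$ and the reduced boundary case $n'_l=1$, which the paper likewise treats separately.
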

\begin{proof}
Let $x \in B_l$. Then we can find a regular $k$-algebra $A$ of finite type and $f,g \in A$ such that
$\spec(A/(f^{n_l}g^{n_j})) = U_x \subset B$. (Take $f$ as a local equation for $B_l$ in $X$ and $g$ one for $B_j$,
if $x \in B_l \cap B_j$ and $g=1$ if $x$ is a regular point of $\red{B}$.)
If we do this also around $\vp(x)$, we get, by abuse of notation,
$\vp: A/(f^{n'_l}g^{n'_j}) \Ra \OS{A}/(\OS{f}^{n_l'}\OS{g}^{n_j'})$. Further, if $n'_l >1$ or $g\not =1$, then
we may choose $\OS{f}$ and $\OS{g}$ in such a way that the $\vp$ maps the residue class of $f$ to the residue class of $\OS{f}$,
and the one of $g$ to the one of $\OS{g}$.

Now it suffices to show the proposition for the case $n_l = n_l'+1$ and $n_j=n_j'$.

First we do the cases that $x$ is no regular point of $\red{B}$, that is $g\not=1$.

Let $\Phi: A \Ra \OS{A}/(\OS{f}^{n_l'}\OS{g}^{n_j})$ the map we get by composing with $\vp$.
Since $A$ is regular and of finite type, and $k= \kk{k}$, $\spec(A)$ is smooth, and we get a map
$\Psi: A \Ra \OS{A}/(\OS{f}^{n_l'+1}\OS{g}^{n_j})$ from $\Phi$ by the infinitesimal lifting property.
First we want to show that we can choose $\Psi$ such that it maps $f$ and $g$
to the residue class of $\OS{f}$ and $\OS{g}$. The ideal $(\OS{f}^{n_l'}\OS{g}^{n_j})/(\OS{f}^{n_l'+1}\OS{g}^{n_j})$
is an $ \OS{A}/(\OS{f}^{n_l'}\OS{g}^{n_j})$-module, and so using $\Phi$ also an $A$-module.
Let now $\del$ be any $k$-derivation from $A$ to $(\OS{f}^{n_l'}\OS{g}^{n_j})/(\OS{f}^{n_l'+1}\OS{g}^{n_j})$,
that is a $k$-linear map fulfilling the Leibniz rule $\del(ab)=\Phi(a)\del(b)+\Phi(b)\del(a)$ for all $a,b \in A$.
Then for $\Psi'=\Psi+\del$ we have $\Psi'(ab)=\Psi'(a)\Psi'(b)$ by a straight-forward calculation using the Leibniz rule
and $\Psi \cdot \del=\Phi \cdot \del$ in $(\OS{f}^{n_l'}\OS{g}^{n_j})/(\OS{f}^{n_l'+1}\OS{g}^{n_j})$.

So $\Psi'$ is also a lifting of $\Phi$. Now $\Psi(f)-\OS{f}$ and $\Psi(g)-\OS{g}$ in are in the kernel of $\pi$ and thus
in $(\OS{f}^{n_l'}\OS{g}^{n_j})/(\OS{f}^{n_l'+1}\OS{g}^{n_j})$, so we can choose $\Psi$ in the described way if we find a
derivation $\del$ such that $\del(f)=-(\Psi(f)-\OS{f})$ and $\del(g)=-(\Psi(g)-\OS{g})$. Now we use the standard identification
between derivations and elements of $\Hom_{A}(\kd{A}, (\OS{f}^{n_l'}\OS{g}^{n_j})/(\OS{f}^{n_l'+1}\OS{g}^{n_j}))$. If $y$ is is
the singular point of $\spec(A/(fg))$, 
then $\kd{A_x}$ is generated
by $d_xf$ and $d_xg$, but $\kd{A}$ is finitely generated and quasi-coherent,
so maybe after shrinking $U_x$ we may assume that $df$ and $dg$ generate $\kd{A}$.
This shows the existence of a derivation $\del$ with $\del(f)=-(\Psi(f)-\OS{f})$ and $\del(g)=-(\Psi(g)-\OS{g})$ and thus we
can assume $\Psi(f)=\OS{f}$ and $\Psi(g)=\OS{g}$ in $\OS{A}/(\OS{f}^{n_l'+1}\OS{g}^{n_j})$.

Now we have $\Psi(f^{n_l'+1}g^{n_j})=0$ and we get $\psi'$ by the universal property
of the kernel. If we do the same for $\vp^{-1}$ and $\OS{A}$ we get
$\OS{\psi}: \OS{A}/(\OS{f}^{n_l'+1}\OS{g}^{n_j}) \Ra A/(f^{n_l'+1}g^{n_j})$.

By construction we get for all $a \in A/(f^{n_l'+1}g^{n_j})$ and all
$b \in \OS{A}/(\OS{f}^{n_l'+1}\OS{g}^{n_j})$:
\[\OS{\psi} \circ \psi'(a) = a + f^{n_l'}g^{n_j}\del(a) \text{ and }
\psi' \circ \OS{\psi}(b) = b + \OS{f}^{n_l'}\OS{g}^{n_j}\OS{\del}(b)\]
Now we set for $a \in A/(f^{n_l'+1}g^{n_j})$:
\[\xi(a) = a- f^{n_l'}g^{n_j} \del(a) \text{ and } \psi = \psi' \circ \xi\]
Then a strict forward calculation shows, that $\psi$ is the isomorphism we need.

For the remaining case, $g=1$. If $n'_l\ge 2$ the previous argumentation holds also in this case,
we only have to replace $dg$ with some $dg'$ such that $\kd{A}$ are generated by $df$ and $dg'$ at one place.
So we have only to do the case $n'_l=1$.
But in this case this follows because $\spec(A/(f))$ is smooth.
\end{proof}
Now we go back to the global situation. Here we may assume that
$C \subset B \subset X$ are given by ideal sheaves
$\I_B = \prodl_{l=1}^n \I_l^{n_l}$ and $\I_C = \prodl_{l=1}^n \I_l^{n'_l}$.
That is, we have an exact sequence
\[ 0 \lRa \I_C/\I_B \lRa \MO_X/\I_B \lRa \MO_X/\I_C \lRa 0\] 
%

Now we want to construct a sheaf classifying automorphism $\alpha$ of $B$ which are the identity on $C$, that is
the sections of this sheaf are not automorphisms of $B$, but of the $\MO_X$-algebra $\MO_X/\I_B$. So by the
well known contravariant correspondence between automorphisms of $B$ and automorphisms of $\MO_X/\I_B$, the
sections of this sheaf are the opposite group to the group of automorphisms of $B$.
The identity condition restricted to $C$ then translates to the commutativity of the following
diagram:
\begin{EQ}\fseqlabel{dia_def_aut}
\begin{gathered}
\xymatrix{
0 \ar[r] & \I_C/\I_B \ar[r] \ar[d]^{\alpha|_{\I_C/\I_B}} & \MO_X/\I_B \ar[r] \ar[d]^{\alpha} & \MO_X/\I_C \ar[r] \ar[d]^{\id} & 0\\
0 \ar[r] & \I_C/\I_B \ar[r]  & \MO_X/\I_B \ar[r]  & \MO_X/\I_C \ar[r] & 0 }
\end{gathered}
\end{EQ}
From this commutativity we get, that $\alpha$ maps $\I_C/\I_B$ necessarily to $\I_C/\I_B$ and
using the snake-lemma we get that the restriction $\alpha|_{\I_C/\I_B}$ must be already surjective.

Now we define the pre-sheaf $\SAut_C(B)$ whose sections for an open $U \subset B$ are defined
as the set of all isomorphisms $\alpha: \Gamma(U,\MO_B|_U) \Ra \Gamma(U,\MO_B|_U)$ such that $\alpha$ is the identity on the set
$U$ and for all $x \in U$ we have $\alpha_x( (\I_C/\I_B)_x) = (\I_C/\I_B)_x$ and $\alpha$ induces the identity on $\MO_{C,x}$.
Then $\Gamma(U,\SAut_C(B))$ together with the composition is a group.
By the discussion above the automorphism making the diagram above commutative are exactly the global sections of $\SAut_C(B)$.
Also the pre-sheaf $\SAut_C(B)$ is a sheaf.

Now the proof of Theorem 6.6.~of \cite{MR0320365} applies without change in our situation, so we get:
\begin{prop}\fslabel{prop_obstr_lift_iso}
Let $\psi: C \Ra \OS{C}$ be an isomorphism and assume that we can extend $\psi$ locally.
Then the local extensions determine a class $o \in \check{H}^1(B, \SAut_C(B))$, and $o = *$ if and only if
we can glue the local extensions to a global isomorphism $\Psi: B \Ra \OS{B}$.
\end{prop}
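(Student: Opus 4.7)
My plan is to run the standard non-abelian Čech $1$-cohomology obstruction argument that Laufer uses in the analytic category. I would start by choosing an open cover $\{U_i\}$ of $B$ fine enough that the hypothesis that $\psi$ extends locally supplies isomorphisms $\psi_i : B|_{U_i} \Ra \OS{B}|_{\psi_i(U_i)}$ extending $\psi|_{C\cap U_i}$ (in our main application, such extensions are produced by Proposition \ref{prop_local_thicken}). Since $B$ and $\OS{B}$ share their underlying topological space with $C$ and $\OS{C}$ via $\psi$, on an overlap $U_i\cap U_j$ the composition
\[ \alpha_{ij} \;:=\; \psi_j^{-1}\circ\psi_i \]
is an automorphism of $B|_{U_i\cap U_j}$ which is the identity on the underlying space and induces the identity on $\MO_C$. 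Hence $\alpha_{ij}$ is a section of $\SAut_C(B)$.

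Next I would verify the cocycle relation $\alpha_{jk}\circ\alpha_{ij} = \alpha_{ik}$ on triple overlaps, which is immediate from $\psi_k^{-1}\circ\psi_j\circ\psi_j^{-1}\circ\psi_i = \psi_k^{-1}\circ\psi_i$. Thus the local extensions $\{\psi_i\}$ produce a Čech $1$-cocycle with values in the (in general non-abelian) sheaf of groups $\SAut_C(B)$, and I set $o \in \check{H}^1(B,\SAut_C(B))$ to be its class; a short check shows that passing to a refinement of the cover only modifies $(\alpha_{ij})$ by a coboundary, so $o$ is well-defined.

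For the equivalence, if $o = *$ then after passing to a refinement we may write $\alpha_{ij} = \beta_j^{-1}\circ\beta_i$ with $\beta_i\in\Gamma(U_i,\SAut_C(B))$, and I would set $\psi_i' := \psi_i\circ\beta_i^{-1}$. These still extend $\psi$ on $C\cap U_i$ (because each $\beta_i$ is the identity on $\MO_C$), and the computation $(\psi_j')^{-1}\circ\psi_i' = \beta_j\circ\alpha_{ij}\circ\beta_i^{-1} = \id$ on overlaps shows that they glue to the desired global $\Psi : B \Ra \OS{B}$. Conversely, from a global extension $\Psi$ the sections $\beta_i := \Psi^{-1}\circ\psi_i \in \Gamma(U_i,\SAut_C(B))$ satisfy $\beta_j^{-1}\circ\beta_i = \alpha_{ij}$, forcing $o = *$.

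The main foundational effort lies in setting up non-abelian Čech cohomology in degree $1$ for the sheaf of groups $\SAut_C(B)$ and in confirming that $\SAut_C(B)$ is really a sheaf of (possibly non-commutative) groups on $B$; both points were already recorded in the construction of $\SAut_C(B)$ above. Once this is in place the argument is essentially bookkeeping on cocycles, which is why, as the author notes, the proof of Theorem 6.6 of \cite{MR0320365} transfers verbatim.
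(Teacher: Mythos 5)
Your argument is correct and is exactly the standard non-abelian Čech obstruction argument that the paper invokes by citing the proof of Theorem 6.6 of Laufer, so it matches the intended proof. The only cosmetic point is the contravariance between automorphisms of $B$ and of $\MO_X/\I_B$ already noted in the construction of $\SAut_C(B)$, which merely flips the cocycle convention and does not affect the argument.
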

The other direction is also true: If two schemes become isomorphic after thickening, the are isomorphic.

Now, under some additional conditions, the pointed set $\check{H}^1(B, \SAut_C(B))$ is actually computable,
and is in most cases even a group.

The sheaf $\SAut_C(B)$ has a subsheaf $\SAut_{C,\I_C/\I_B}(B)$ of normal subgroups given by
\[\Gamma(U, \Aut_{C,\I_C/\I_B})=\set{\alpha \in \Gamma(U, \Aut_{C})}{ \alpha_x \text{ is the identity on }
(\I_C/\I_B)_x \forall x \in U}\]
and if we denote by $\AQ$ the quotient sheaf we get an exact sequence of sheaves of groups:
\begin{EQ}\fseqlabel{sh_sq_aut}
1 \lRa \SAut_{C,\I_C/\I_B}(B) \lRa \SAut_{C}(B) \lRa \AQ \Ra 1
\end{EQ}

Now as first condition, we assume $\I_C^2 \subset \I_B$, that is for every open $U \in X$ we have $(\I_C(U))^2 \subset \I_B(U)$ in
$\MO_X(U)$. Then like \cite{MR0320365}, Proposition 6.4, we can get an isomorphism
\[\lambda:  \SHom_{\MO_{B}}(\kdr{C}{k}, \I_C/\I_B) \lRa \SAut_{C,\I_C/\I_B}(B)\]

We further need the following condition:
\begin{defin}
If we say that \emph{$C \subset B$ fulfil condition \condS}, if we have exactly one $l_0$ with $n_{l_0}=n'_{l_0}+1$
and for all other $l$ we have $n_l=n'_l$.
\end{defin}
First we note, that if $C \subset B$ fulfil condition  \condS, then we have $\I_C^2 \subset \I_B$,
so we still have the isomorphism $\lambda$.
Following Laufers calculations in \cite{MR0320365} one gets:
\begin{cor}\fslabel{cor_relevant_h1}
Suppose $C \subset B$ fulfil condition \condS, then $\check{H}^1(B,\SAut_{C}(B))$ vanishes,
if the following cohomology groups respectively sets vanish:
\begin{itemize}
     \item If $n_{l_0}=2$: $H^1(B_{l_0},(\kdr{B_{l_0}}{k})^\vee \otimes_{\MO_{B_{l_0}}} \I_C/\I_B)$ and $\check{H}^1(B, \AQ)$.
\item If $n_{l_0}> 2$: $H^1(B_{l_0},(\kdr{B_{l_0}}{k})^\vee \otimes_{\MO_{B_{l_0}}} \I_C/\I_B)$ and
     $H^1(B_{l_0},\NOR_{B_{l_0}/X} \otimes_{\MO_{B_{l_0}}} \I_C/\I_B)$.
\end{itemize}
\end{cor}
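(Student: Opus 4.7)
The plan is to exploit the short exact sequence of sheaves of groups (\ref{sh_sq_aut}) and pass to the associated exact sequence of pointed sets
\[\check{H}^1(B,\SAut_{C,\I_C/\I_B}(B)) \lRa \check{H}^1(B,\SAut_C(B)) \lRa \check{H}^1(B,\AQ),\]
so that vanishing of the middle term will follow from vanishing of the two outer ones. This already accounts for the appearance of $\check{H}^1(B,\AQ)$ in the hypothesis for the case $n_{l_0}=2$, and reduces everything else to controlling $\check{H}^1(B,\SAut_{C,\I_C/\I_B}(B))$.

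For this subgroup sheaf I would apply the isomorphism $\lambda$, which is available because under $\condS$ we have $\I_C^2\subset \I_B$, to identify
\[\check{H}^1(B,\SAut_{C,\I_C/\I_B}(B)) \iso H^1\bigl(B, \SHom_{\MO_B}(\kdr{C}{k}, \I_C/\I_B)\bigr).\]
Under $\condS$ the sheaf $\I_C/\I_B$ is supported on $B_{l_0}$ and invertible there, so the Hom sheaf is also supported on $B_{l_0}$ and its cohomology agrees with its cohomology on $B_{l_0}$. The main step is then to analyse $\kdr{C}{k}|_{B_{l_0}}$ via the conormal sequence
\[\I_C/\I_C^2 \lRa \kdr{X}{k}|_C \lRa \kdr{C}{k} \lRa 0.\]
Writing $\I_C$ locally as $(f^{n'_{l_0}})$, or $(f^{n'_{l_0}}g^{n'_j})$ at an intersection point with a second component, the identity $d(f^{n'_{l_0}})=n'_{l_0}f^{n'_{l_0}-1}df$ shows that when $n'_{l_0}\ge 2$, i.e.\ $n_{l_0}>2$, the leftmost map vanishes after restriction to $B_{l_0}$, and hence
\[0\lRa \I_{l_0}/\I_{l_0}^2 \lRa \kdr{X}{k}|_{B_{l_0}} \lRa \kdr{B_{l_0}}{k} \lRa 0\]
is canonically isomorphic to the analogous sequence with $\kdr{C}{k}|_{B_{l_0}}$ in the middle. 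Applying $\SHom(-,\I_C/\I_B)$ and using $\SHom(\I_{l_0}/\I_{l_0}^2,\I_C/\I_B)\iso \NOR_{B_{l_0}/X}\otimes \I_C/\I_B$ yields a short exact sequence of locally free sheaves on $B_{l_0}$ whose long cohomology sequence reduces vanishing of $H^1$ of the Hom sheaf to vanishing of the two groups listed in the case $n_{l_0}>2$.

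In the case $n_{l_0}=2$ the leftmost arrow of the conormal sequence does \emph{not} vanish, and $\kdr{C}{k}|_{B_{l_0}}$ agrees with $\kdr{B_{l_0}}{k}$ up to torsion supported in the intersection points of $B_{l_0}$ with the other components; since $\SHom$ of a torsion sheaf into a line bundle on the smooth curve $B_{l_0}$ vanishes, this torsion drops out and only the $(\kdr{B_{l_0}}{k})^\vee\otimes \I_C/\I_B$ summand contributes, while the normal-direction information is recovered from the separate obstruction $\check{H}^1(B,\AQ)$. The step I expect to be the main obstacle is, in the case $n_{l_0}>2$, checking directly from the definition of $\AQ$ that $\check{H}^1(B,\AQ)$ is already absorbed into $H^1(B_{l_0},\NOR_{B_{l_0}/X}\otimes \I_C/\I_B)$, so that no separate hypothesis on $\AQ$ is needed; this requires inspecting precisely which infinitesimal automorphisms of $\MO_X/\I_B$ act nontrivially on $\I_C/\I_B$ and is the only point where the analytic argument in \cite{MR0320365} needs actual modification. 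The remainder is routine bookkeeping following Laufer.
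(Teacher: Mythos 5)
Your skeleton coincides with Laufer's argument, which is all the paper itself invokes for this corollary: the exact sequence of pointed sets attached to \eqref{sh_sq_aut}, the identification of the normal subgroup sheaf with $\SHom_{\MO_B}(\kdr{C}{k},\I_C/\I_B)$ via $\lambda$, and the analysis of $\kdr{C}{k}\otimes\MO_{B_{l_0}}$ through the conormal sequence. The conormal computation is correct and does produce exactly the two groups $H^1(B_{l_0},(\kdr{B_{l_0}}{k})^\vee\otimes\I_C/\I_B)$ and $H^1(B_{l_0},\NOR_{B_{l_0}/X}\otimes\I_C/\I_B)$ when $n'_{l_0}\ge 2$, and only the first of them when $n'_{l_0}=1$ (the conormal map then survives generically on $B_{l_0}$ and the cokernel differs from $\kdr{B_{l_0}}{k}$ only by torsion, which dies under $\SHom(-,\I_C/\I_B)$).

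The genuine gap is the step you flag and leave open: why the case $n_{l_0}>2$ carries no hypothesis on $\check{H}^1(B,\AQ)$. Your proposed mechanism --- that $\check{H}^1(B,\AQ)$ gets ``absorbed into'' $H^1(B_{l_0},\NOR_{B_{l_0}/X}\otimes\I_C/\I_B)$ --- is not what happens; that group already arises, exactly as you derive it, from the conormal sequence inside $\SHom(\kdr{C}{k},\I_C/\I_B)$, and it has nothing to do with $\AQ$. The correct statement is that $\AQ$ is the trivial sheaf as soon as $n'_{l_0}\ge 2$. Indeed, since $(\I_C/\I_B)^2=0$ under \condS, every section of $\SAut_C(B)$ has the form $\id+\delta$ with $\delta$ a $k$-derivation of $\MO_B$ valued in $\I_C/\I_B$; writing $\I_C=(f^{n'_{l_0}}g^{n'_j})$ and $\I_B=(f^{n'_{l_0}+1}g^{n'_j})$ locally, the Leibniz rule gives
\[\delta(uf^{n'_{l_0}}g^{n'_j})=u\,(\,n'_{l_0}f^{n'_{l_0}-1}g^{n'_j}\delta(f)+n'_jf^{n'_{l_0}}g^{n'_j-1}\delta(g)\,),\]
with $\delta(f),\delta(g)\in(f^{n'_{l_0}}g^{n'_j})/(f^{n'_{l_0}+1}g^{n'_j})$. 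The second summand lies in $(f^{2n'_{l_0}}g^{2n'_j-1})\subset\I_B$ for all $n'_{l_0},n'_j\ge1$, while the first lies in $(f^{2n'_{l_0}-1}g^{2n'_j})$, which is contained in $\I_B$ precisely when $2n'_{l_0}-1\ge n'_{l_0}+1$, i.e.\ $n'_{l_0}\ge2$. Hence for $n_{l_0}>2$ every $\delta$ kills $\I_C$, so $\SAut_{C,\I_C/\I_B}(B)=\SAut_C(B)$, $\AQ=1$, and $\check{H}^1(B,\AQ)=*$ for free; for $n_{l_0}=2$ the first summand survives, which is exactly why the hypothesis on $\check{H}^1(B,\AQ)$ must be retained there. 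With this local computation inserted, your proof is complete.
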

Now by calculating the degree of the involved sheaves, Serre duality and that $\deg(\SL) < 0$ on an integral,
one-dimensional, proper $k$-scheme implies $h^0(Y,\SL) =0$ we get:
\begin{cor}\fslabel{cor_vanish_by_num}
Suppose $C \subset B$ fulfil condition \condS, then:
\[ H^1(B_{l_0},(\kdr{B_{l_0}}{k})^\vee \otimes_{\MO_{B_{l_0}}} \I_C/\I_B)=0 \text{ if }
2(2p_a(B_{l_0})-2) +B_{l_0} \cdot \suml_{l=1}^n n'_l B_l < 0.\]
If we additionally have $n'_{l_0}\ge 2$, then
\[H^1(B_{l_0},\NOR_{B_{l_0}/Y} \otimes_{\MO_{B_{l_0}}} \I_C/\I_B) =0 \text{ if }
 2p_a(B_{l_0})-2 - B_{l_0}\cdot B_{l_0} +B_{l_0} \cdot \suml_{l=1}^n n'_l B_l < 0.\]
\end{cor}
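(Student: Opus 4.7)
The plan is to reduce both vanishing statements to a degree count on the smooth projective curve $B_{l_0}$, then apply Serre duality together with the hint (preceding the corollary) that a line bundle of negative degree on an integral, proper, one-dimensional $k$-scheme has no global sections.

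First I would compute $\deg_{B_{l_0}}(\I_C/\I_B)$ under condition $\condS$. Since $n_l=n'_l$ for $l\neq l_0$ and $n_{l_0}=n'_{l_0}+1$, we have $\I_B = \I_{l_0}\cdot\I_C$, so locally $\I_C/\I_B$ is generated by $f^{n'_{l_0}}\prodl_{l\neq l_0}g_l^{n'_l}$, where $f$ is a local equation for $B_{l_0}$ and $g_l$ one for $B_l$. Working at smooth points of $\red{B}$ and at the transversal intersection points separately identifies $\I_C/\I_B$ as an $\MO_{B_{l_0}}$-module with the tensor product of $\I_{l_0}^{n'_{l_0}}/\I_{l_0}^{n'_{l_0}+1}\iso(\NOR_{B_{l_0}/X})^{\otimes(-n'_{l_0})}$ and $\prodl_{l\neq l_0}(\I_l\cdot\MO_{B_{l_0}})^{\otimes n'_l}$. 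Since $\deg(\NOR_{B_{l_0}/X}) = B_{l_0}^2$ and $\deg(\I_l\cdot\MO_{B_{l_0}}) = -B_{l_0}\cdot B_l$, this yields
\[ \deg_{B_{l_0}}(\I_C/\I_B) \;=\; -n'_{l_0} B_{l_0}^2 - \suml_{l\neq l_0} n'_l\, B_{l_0}\cdot B_l \;=\; -B_{l_0}\cdot \suml_{l=1}^n n'_l B_l. \]

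Next I would apply Serre duality, using that $B_{l_0}$ is a smooth proper integral curve so that $\kdr{B_{l_0}}{k}=\omega_{B_{l_0}}$ has degree $2p_a(B_{l_0})-2$. For the first bound, let $\SL=(\kdr{B_{l_0}}{k})^\vee\otimes \I_C/\I_B$; then $\omega_{B_{l_0}}\otimes\SL^\vee$ has degree $2(2p_a(B_{l_0})-2) + B_{l_0}\cdot\suml n'_l B_l$, and the hypothesis forces this to be negative, so $h^0$ of this line bundle vanishes by the hint, giving $H^1(B_{l_0},\SL)=0$ by Serre duality. For the second bound, let $\SL=\NOR_{B_{l_0}/X}\otimes \I_C/\I_B$; then $\omega_{B_{l_0}}\otimes\SL^\vee$ has degree $(2p_a(B_{l_0})-2) - B_{l_0}^2 + B_{l_0}\cdot\suml n'_l B_l$, and again the hypothesis produces the required negativity.

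There is no serious obstacle here; the role of the side assumption $n'_{l_0}\ge 2$ is not to enter the degree calculation but to justify, back in Corollary \ref{cor_relevant_h1}, that the quotient sheaf $\AQ$ in the exact sequence \eqref{sh_sq_aut} can be controlled by $\NOR_{B_{l_0}/X}\otimes \I_C/\I_B$ in the first place. The whole argument is then a two-line Riemann--Roch-style bookkeeping followed by Serre duality.
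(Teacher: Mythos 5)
Your proposal is correct and follows exactly the route the paper indicates (the paper gives no detailed proof, only the sentence preceding the corollary: compute the degrees, apply Serre duality, and use that a line bundle of negative degree on an integral proper curve has no sections). Your degree count $\deg_{B_{l_0}}(\I_C/\I_B)=-B_{l_0}\cdot\suml_{l=1}^n n'_lB_l$, which follows most quickly from $\I_B=\I_{l_0}\cdot\I_C$ under condition \condS, and the subsequent Serre-duality bookkeeping reproduce precisely the two stated numerical criteria, and your remark on the role of $n'_{l_0}\ge 2$ is consistent with the case distinction in Corollary \ref{cor_relevant_h1}.
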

Combining the previous two corollaries we get the following reformulation of Theorem 6.8 of \cite{MR0320365}:
\begin{prop}\fslabel{prop_iso_lifts_if_vanish}
Suppose $C \subset B$ fulfil condition \condS, and $n'_{l_0} \ge 2$, and if the conditions of Corollary \ref{cor_vanish_by_num}
are fulfilled, then the map $ \CEQs{C} \Ra \CEQs{B}$ mapping $[(C',X)]$ to the equivalence class of any extension of $C'$
is a well-defined bijection.
\end{prop}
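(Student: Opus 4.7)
The plan is to construct an explicit inverse $\CEQs{B} \Ra \CEQs{C}$ by restriction and to verify that both the extension map and the restriction map are well defined on equivalence classes; the whole argument reduces to the extension problem for isomorphisms, which is controlled by the obstruction theory built up earlier in the section.

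First I would set up the extension map. Given $(C', X') \in \CEQs{C}$, combinatorial equivalence to $C$ identifies the integral components $B'_l$ of $\red{C'}$ with those of $\red{C}$, and I would declare $B'$ to be the subscheme of $X'$ cut out by the ideal sheaf $\prodl_{l=1}^n (\I'_l)^{n_l}$, where $\I'_l$ is the ideal sheaf of $B'_l$ in $X'$. Under condition \condS\ this just raises the multiplicity at $l_0$ from $n'_{l_0}$ to $n'_{l_0}+1$, so the dual graph of $B'$ equals that of $B$ and $(B',X') \in \CEQs{B}$. The delicate point is independence from the representative: if $\vp \colon C'_1 \to C'_2$ is an isomorphism of $k$-schemes (with possibly different ambient $X_1, X_2$), I must produce an isomorphism $B'_1 \to B'_2$.

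For this I would apply Proposition \ref{prop_local_thicken} at each point of $B'_1$ to extend $\vp$ locally to isomorphisms between open neighbourhoods of $B'_1$ and $B'_2$. Proposition \ref{prop_obstr_lift_iso} then produces a single obstruction class $o \in \check{H}^1(B'_1, \SAut_{C'_1}(B'_1))$ whose vanishing is equivalent to the existence of a global extension $\Psi \colon B'_1 \to B'_2$. Since $C'_1 \subset B'_1$ fulfils condition \condS, Corollary \ref{cor_relevant_h1} reduces vanishing of $o$ to the vanishing of the cohomology groups on $B'_{1,l_0}$ listed there, and Corollary \ref{cor_vanish_by_num} reduces that vanishing further to a purely numerical inequality in the $p_a(B_{l_0})$, the multiplicities $n'_l$ and the intersection numbers $B_{l_0} \cdot B_l$. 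All of this data lives in the dual graph, so the hypothesised inequalities for $B$ transfer verbatim to $B'_1$. Hence $o = \ast$, the local extensions glue, and the map is well defined.

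Finally I would construct the restriction map $\rho \colon \CEQs{B} \Ra \CEQs{C}$ by sending $[(B'', X'')]$ to $[(\suml_{l=1}^n n'_l B''_l, X'')]$. Any isomorphism $B''_1 \iso B''_2$ preserves integral components of the reduction and the multiplicities $n_l$ attached to them (since these are encoded in the divisor structure of $B''_i$), and after matching components by the induced dual-graph automorphism it restricts to an isomorphism of the subschemes with multiplicities $n'_l$; thus $\rho$ is well defined on equivalence classes. By construction, extension followed by restriction and restriction followed by extension are both the identity on representatives, which gives the bijection. The main obstacle is the extension step in the second paragraph; everything else is bookkeeping once the cohomological vanishing is in hand.
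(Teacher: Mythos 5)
Your argument is correct and is precisely the route the paper intends: the paper in fact gives no written proof, stating only that the proposition follows by combining Corollaries \ref{cor_relevant_h1} and \ref{cor_vanish_by_num} with the obstruction theory of Propositions \ref{prop_local_thicken} and \ref{prop_obstr_lift_iso} (this is Laufer's Theorem 6.8), which is exactly what you carry out, with the hypothesis $n'_{l_0}\ge 2$ correctly placing you in the second case of Corollary \ref{cor_relevant_h1} so that $\check{H}^1(B,\AQ)$ never enters. Your explicit restriction map making injectivity precise is a reasonable completion of the ``bijection'' claim and is consistent with how the paper later uses the result.
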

Now we want to transfer the results above to the tautness of a normal two-dimensional singularity. For this fix one anti-ample cycle
$\OS{Z} = \suml_{l=1}^nr_l E_l$ for $S$. 
We construct $\OS{Z}$ stepwise, that is, let $r = \suml_{l=1}^n r_l$, we choose $\beta_0,...,\beta_{r-1} \in \{1,...,n\}$
as follows: $\OS{Z}_1=E_{\beta_0}$, $\OS{Z}_r = \OS{Z}$ and for all $i \in \{1,...,r-1\}$ we have
$\OS{Z}_{i+1} = \OS{Z}_i + E_{\beta_{i}}$. By construction, if we set $B=\OS{Z}_{i+1}$ and $C=\OS{Z}_{i}$ they fulfill condition
\condS, and we can use our calculations above.
We define
\[\tau = \maxl_{i \in \{1,...,r-1\}}(E_{\beta_i}\cdot \OS{Z}_i)\text{ and }
\lambda = \maxl_{l \in \{1,...,n\}}\{0, 2(2p_a(E_l) -2), 2p_a(E_l) -2- E_l^2\}.\]
Proposition \ref{prop_iso_lifts_if_vanish} now gives us the following, which is a reformulation of Theorem 6.9 of \cite{MR0320365}:
\begin{prop}\fslabel{prop_red_to_finite}
Let $\OS{Z}$, $\tau$ and $\lambda$ as above. If we have $\nu \ge \max\{\lambda + \tau +1,1\}$ and if at least one $n_l$  is equal
to $1$ additionally $\nu \ge 2$,
than we have a bijection \[\CEQs{\nu \OS{Z}}\lRa \CEQs{ (\nu+1) \OS{Z}}\]
\end{prop}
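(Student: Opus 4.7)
The plan is to telescope the desired bijection through intermediate cycles built from the stepwise construction of $\OS{Z}$. Set $\OS{Z}_0 = 0$ and $C_i = \nu \OS{Z} + \OS{Z}_i$ for $i = 0, 1, \ldots, r$, so that $C_0 = \nu \OS{Z}$, $C_r = (\nu+1)\OS{Z}$, and $C_{i+1} = C_i + E_{\beta_i}$. By construction the pair $(C_i, C_{i+1})$ satisfies condition \condS{} with distinguished index $l_0 = \beta_i$. The sought bijection $\CEQs{\nu\OS{Z}} \to \CEQs{(\nu+1)\OS{Z}}$ will then be the composition of the $r$ bijections $\CEQs{C_i} \to \CEQs{C_{i+1}}$ produced by Proposition \ref{prop_iso_lifts_if_vanish}, provided its hypotheses can be checked at every step.

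Two things need verification. First, $n'_{l_0} \ge 2$: the coefficient of $E_{\beta_i}$ in $C_i$ is at least $\nu r_{\beta_i}$, so this is automatic when every $r_l \ge 2$ and $\nu \ge 1$, and the extra assumption $\nu \ge 2$ handles the case where some $r_l = 1$. Second, the numerical vanishing conditions of Corollary \ref{cor_vanish_by_num}. Writing
\[E_{\beta_i} \cdot C_i = \nu (E_{\beta_i} \cdot \OS{Z}) + E_{\beta_i} \cdot \OS{Z}_i,\]
anti-ampleness of $\OS{Z}$ gives $E_{\beta_i} \cdot \OS{Z} \le -1$ and the definition of $\tau$ gives $E_{\beta_i} \cdot \OS{Z}_i \le \tau$ (with $E_{\beta_0} \cdot \OS{Z}_0 = 0$ for the base case), so $E_{\beta_i} \cdot C_i \le \tau - \nu$. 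Since $\lambda$ simultaneously majorises $2(2p_a(E_{\beta_i}) - 2)$ and $2p_a(E_{\beta_i}) - 2 - E_{\beta_i}^2$, both expressions appearing in Corollary \ref{cor_vanish_by_num} are bounded above by $\lambda + \tau - \nu$, which is strictly negative under the hypothesis $\nu \ge \lambda + \tau + 1$.

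The main obstacle is nothing more than careful bookkeeping: the constants $\tau$ and $\lambda$ have been tailored precisely so that the anti-ample gain of $-\nu$ per step dominates the bounded positive contributions of the incremental assembly $\OS{Z}_i$ and the genus/self-intersection corrections, uniformly over all $r$ intermediate steps. With the hypotheses of Proposition \ref{prop_iso_lifts_if_vanish} verified at every step, composing the corresponding bijections yields the claim.
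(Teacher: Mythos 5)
Your proof is correct and is precisely the argument the paper intends (the paper omits it, deferring to Proposition \ref{prop_iso_lifts_if_vanish} and Laufer's Theorem 6.9): telescope from $\nu\OS{Z}$ to $(\nu+1)\OS{Z}$ by adding the $E_{\beta_i}$ one at a time, check condition \condS{} and $n'_{l_0}\ge 2$ at each step, and verify the numerical vanishing via $E_{\beta_i}\cdot C_i\le \tau-\nu$ together with the definition of $\lambda$. The bookkeeping matches the way $\tau$ and $\lambda$ were set up, so nothing further is needed.
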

As a corollary we get the same result for an arbitrary $B \ge \nu \OS{Z}$:
\begin{cor}\fslabel{cor_osz_triv_ind_greater_triv}
If $B\ge \nu \OS{Z}$, then we have a bijection
\[\CEQs{\nu \OS{Z}} \lRa \CEQs{ B}\]
\end{cor}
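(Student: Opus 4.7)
The plan is to bracket $B$ between $\nu\OS{Z}$ and some sufficiently large multiple $\nu'\OS{Z}$, and then pull what we need out of Proposition \ref{prop_red_to_finite} by a short diagram chase.

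First I would choose $\nu'\ge\nu$ large enough that $\nu'\OS{Z}\ge B$ coefficient-wise; this is possible because $B$ has finitely many components with finite multiplicities. Iterating Proposition \ref{prop_red_to_finite} between $\nu$ and $\nu'$ gives a bijection $\CEQs{\nu\OS{Z}}\to\CEQs{\nu'\OS{Z}}$; its inverse is the restriction map
\[c\colon\CEQs{\nu'\OS{Z}}\lRa\CEQs{\nu\OS{Z}}.\]
The inclusions $\I_{\nu'\OS{Z}}\subset\I_B\subset\I_{\nu\OS{Z}}$ of ideal sheaves also induce obvious restriction maps
\[a\colon\CEQs{\nu'\OS{Z}}\lRa\CEQs{B}\quad\text{and}\quad b\colon\CEQs{B}\lRa\CEQs{\nu\OS{Z}},\]
satisfying $c=b\circ a$.

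Next I would verify that $a$ is surjective. Given $[(B'',X'')]\in\CEQs{B}$, the components of $B''$ are regular curves inside the regular surface $X''$, with precisely the same self-intersections, intersections, and genera as those of $B$ and hence as those of $\nu'\OS{Z}$. Raising their multiplicities to those prescribed by $\nu'\OS{Z}$ produces a cycle $B'''\subset X''$ which is combinatorially equivalent to $\nu'\OS{Z}$ and satisfies $B'''|_B=B''$; thus $[B''']$ maps to $[B'']$ under $a$.

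Since $c=b\circ a$ is a bijection, $a$ is injective, hence by the previous step bijective, and then $b=c\circ a^{-1}$ is also a bijection. The map asserted by the corollary is the inverse (extension) of $b$. I do not expect a genuine obstacle, as all the hard work is already concentrated in Proposition \ref{prop_red_to_finite}: the only non-formal point is that a formal thickening inside $X''$ automatically has the correct combinatorial data to lie in $\CEQs{\nu'\OS{Z}}$, which is immediate since self-intersections, intersections and genera are inherited from the ambient surface and independent of the chosen multiplicities.
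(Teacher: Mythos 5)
Your proof is correct and follows essentially the same route as the paper: sandwich $B$ between $\nu\OS{Z}$ and a larger multiple $\nu'\OS{Z}$, invoke Proposition \ref{prop_red_to_finite} for the outer composite, and conclude by a two-out-of-three argument. The only difference is bookkeeping — the paper runs the argument with the extension maps $\CEQs{\nu\OS{Z}}\Ra\CEQs{B}\Ra\CEQs{(\nu+l)\OS{Z}}$ and their injectivity, while you dualize to restriction maps and verify surjectivity of one of them explicitly, which if anything makes the well-definedness issues more transparent.
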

\begin{proof}
There exist a $l \in \N$ with $B \le (\nu+l)\OS{Z}$. Now 
the maps
\[\CEQs{\nu \OS{Z}} \lRa \CEQs{ B} \lRa \CEQs{(\nu+l) \OS{Z}},\]
are injective, but by the previous proposition the composition is also bijective, so the first map is already bijective.
\end{proof}
Now our $\nu$ still depends on the choice of the $\beta_i$, but there are only finitely many choices, so we have a minimal $\tau$,
which we call $\tau_{min}$. Then we define:
\begin{defin}\fslabel{def_sign_multi}
Let $S$ be a normal two-dimensional singularity and $\OS{Z}=\suml_{l=1}^n n_{l} E_l$ an anti-ample cycle for $S$.
The \emph{significant multiplicity of $\OS{Z}$} is the smallest integer $\nu$ such that
$\nu \ge \lambda + \tau_{min} +1$, and $\gcd(p,\nu)=1$; if at least one $n_l$ is equal to $1$, then we furthermore demand $\nu \ge 2$.
\end{defin}
Note that the condition $\gcd(p,\nu)=1$ is not necessary for the results of this section, but later it simplifies the formulations.
By definition the $\nu$ only depends on the dual graph of $\OS{Z}$.
Now we can simply take one order $\beta_0,\dots,\beta_{r-1}$ such that $\tau$ is minimal, and immediately get
the following corollary of Proposition \ref{prop_red_to_finite} respectively Corollary \ref{cor_osz_triv_ind_greater_triv}:
\begin{cor}\fslabel{cor_induction_neqs}
If $\nu$ is the significant multiplicity of $\OS{Z}$ and $\CEQs{\nu \OS{Z}} = \{ [(\nu \OS{Z},X)]\}$,
then for all $B=\suml_{l=1}^l n_lE_l$ we have $\CEQs{B} = \{ [(B,X)]\}$
\end{cor}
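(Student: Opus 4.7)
The plan is to combine the pieces already assembled in this section. First I fix an ordering $\beta_0, \ldots, \beta_{r-1}$ of the components of $\OS{Z}$ that realises $\tau = \tau_{min}$, so that each successive pair $\OS{Z}_i \subset \OS{Z}_{i+1}$ fulfils condition~\condS. The significant multiplicity $\nu$ is defined precisely so that $\nu \geq \lambda + \tau_{min} + 1$, with $\nu \geq 2$ in the borderline case where some $n_l = 1$; by the definitions of $\lambda$ and $\tau_{min}$, these inequalities are exactly what is needed to force the numerical vanishing bounds of Corollary~\ref{cor_vanish_by_num} at every step of the thickening from $\nu\OS{Z}$ to $(\nu+1)\OS{Z}$, independent of which component $E_{\beta_i}$ is added next.

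Applying Proposition~\ref{prop_red_to_finite} then yields a bijection $\CEQs{\nu\OS{Z}} \lRa \CEQs{(\nu+1)\OS{Z}}$, and Corollary~\ref{cor_osz_triv_ind_greater_triv} upgrades this to a bijection $\CEQs{\nu\OS{Z}} \lRa \CEQs{B}$ for any $B \geq \nu\OS{Z}$, which sends $[(\nu\OS{Z},X)]$ to the class of its natural extension $[(B,X)]$. Since by hypothesis the source is the singleton $\{[(\nu\OS{Z},X)]\}$, the target is forced to be the singleton $\{[(B,X)]\}$. This is the desired conclusion (read with the implicit hypothesis $B \geq \nu\OS{Z}$, which is the regime in which the tools of this section reach).

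The main obstacle is conceptually trivial but notationally delicate: verifying that a single $\nu$ suffices uniformly for every inductive step along the chosen order, independent of the particular $\beta_i$ being attached. This is precisely what motivates defining $\lambda$ as a maximum over all components and $\tau_{min}$ as the optimised worst-case intersection number. Once these definitions are unpacked, the corollary reduces to iterated application of Proposition~\ref{prop_iso_lifts_if_vanish}, with no further geometric input required.
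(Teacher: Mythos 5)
Your argument is correct and is essentially the paper's own: the corollary is stated there without a separate proof, being obtained exactly as you do by fixing an order $\beta_0,\dots,\beta_{r-1}$ realising $\tau_{min}$ so that the definition of the significant multiplicity feeds the hypotheses of Proposition~\ref{prop_red_to_finite}, and then invoking Corollary~\ref{cor_osz_triv_ind_greater_triv}. Your caveat about reading the statement with $B\ge \nu\OS{Z}$ is reasonable; if one insists on arbitrary $B$ with full support, the singleton claim still follows from the injectivity of $\CEQs{B}\lRa\CEQs{(\nu+l)\OS{Z}}$ for $(\nu+l)\OS{Z}\ge B$, which is the same injectivity already used in the proof of Corollary~\ref{cor_osz_triv_ind_greater_triv}.
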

Finally, the translation back to singularities is the wanted result:
\begin{cor}\fslabel{cor_taut_if_neqs_trivial}
Let $S$ be a normal two-dimensional singularity, $\OS{Z}$ an anti-ample cycle for $S$ and $\OS{\nu}$ its significant
multiplicity. Then $S$ is taut if and only if 
\[\CEQs{j_0 \OS{Z}} = \{[(j_0 \OS{Z},X)]\}\]
for one $j_0 \ge \OS{\nu}$.
\end{cor}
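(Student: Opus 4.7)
The plan is to derive this as a formal consequence of the earlier reduction results, namely Proposition \ref{prop_taut_iff_all_comb_equiv_iso}, Corollary \ref{cor_osz_triv_ind_greater_triv}, and Corollary \ref{cor_induction_neqs}. The natural sequence of cycles to feed into Proposition \ref{prop_taut_iff_all_comb_equiv_iso} is $Z_j = j\OS{Z}$: since $\OS{Z}$ is anti-ample its coefficients $r_l$ are strictly positive, so the tuple $(jr_1,\dots,jr_n)_{j\in\N}$ is non-decreasing with every component tending to infinity, as required.

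For the forward direction, assume $S$ is taut. Proposition \ref{prop_taut_iff_all_comb_equiv_iso} applied to the sequence $Z_j = j\OS{Z}$ yields $\CEQs{j\OS{Z}} = \{[(j\OS{Z},X)]\}$ for every $j$, and in particular for $j=j_0$.

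For the converse, suppose $\CEQs{j_0 \OS{Z}} = \{[(j_0 \OS{Z},X)]\}$ for some $j_0 \ge \OS{\nu}$. Since then $j_0 \OS{Z} \ge \OS{\nu} \OS{Z}$, Corollary \ref{cor_osz_triv_ind_greater_triv} (applied with $\nu = \OS{\nu}$ and $B = j_0 \OS{Z}$) provides a bijection $\CEQs{\OS{\nu}\OS{Z}} \lRa \CEQs{j_0 \OS{Z}}$. As the target is a singleton and the class $[(\OS{\nu}\OS{Z},X)]$ is always present in the source, we conclude $\CEQs{\OS{\nu}\OS{Z}} = \{[(\OS{\nu}\OS{Z},X)]\}$. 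Corollary \ref{cor_induction_neqs} then upgrades this triviality to $\CEQs{B} = \{[(B,X)]\}$ for every cycle $B = \suml_{l=1}^n n_l E_l$ with $n_l \ge 1$. Specialising to the sequence $Z_j = j\OS{Z}$ and invoking Proposition \ref{prop_taut_iff_all_comb_equiv_iso} in the other direction shows that $S$ is taut.

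The argument is essentially bookkeeping, as the substantive work has been carried out in the preceding obstruction-theoretic sections; there is no genuine obstacle to overcome here. The only minor point to keep in mind is that the hypothesis $j_0 \ge \OS{\nu}$ is used precisely to guarantee the hypothesis $B \ge \nu\OS{Z}$ of Corollary \ref{cor_osz_triv_ind_greater_triv}, so that the singleton property of $\CEQs{j_0\OS{Z}}$ can be transported back to the threshold $\OS{\nu}\OS{Z}$ where Corollary \ref{cor_induction_neqs} becomes applicable.
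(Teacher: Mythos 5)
Your argument is correct and is essentially the paper's own proof: both directions reduce to Proposition \ref{prop_taut_iff_all_comb_equiv_iso} applied to the sequence $Z_j=j\OS{Z}$, with Corollary \ref{cor_osz_triv_ind_greater_triv} transporting the singleton property from $j_0\OS{Z}$ down to the threshold cycle and Corollary \ref{cor_induction_neqs} propagating it to all cycles. The only cosmetic difference is that the paper passes through the $\nu$ of Proposition \ref{prop_red_to_finite} (noting $\OS{\nu}\ge\nu$) while you work directly with the significant multiplicity $\OS{\nu}$, which satisfies the same hypotheses.
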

\begin{proof}
Let $\nu$ be as in Proposition \ref{prop_red_to_finite}.
By Corollary \ref{cor_osz_triv_ind_greater_triv} we have $\CEQs{j_0 \OS{Z}} = \{[(j_0 \OS{Z},X)]\}$ for one $j_0 \ge \OS{\nu}\ge \nu$ if
and only if $\CEQs{\nu \OS{Z}} = \{[(\nu \OS{Z},X)]\}$. So the corollary is an immediate consequence of Proposition
\ref{prop_taut_iff_all_comb_equiv_iso} and Corollary \ref{cor_induction_neqs}.
\end{proof}

\section{The plumbing scheme}\fslabel{sect_first_steps}

The last corollary of the previous section reduces the tautness of a normal two-dimensional singularity $S$ 
to the triviality of $\CEQs{\nu \OS{Z}}$. For $n=1$ and $n=2$ this can easily be calculated with the techniques of the
previous section. But already for $n=3$ some of the obstruction groups given in Corollary \ref{cor_relevant_h1} are not trivial.
We use an other approach to get the triviality of $\CEQs{\nu \OS{Z}}$. First by Lemma \ref{lem_necessary_cond_for_taut} we may assume
that $S$ is potentially taut.
Following Laufers idea, we construct a special scheme $P$ with dual graph $\Gamma_{\nu \OS{Z}}$,
the so called plumbing scheme. We then show that $\CEQs{\nu \OS{Z}}$ is trivial if $H^1(P,\SHom_{\MO_P}(\kd{P}, \MO_P)) =0$.

For this let $\Gamma$ be any potentially taut dual graph with vertices $E_l$ and multiplies $n_l$ and self-intersection $-\nu_l$.
Let $t_l$ be the number of edges at $E_l$. From the potentially tautness we have $t_l < 4$.
For every vertex $E_l$ we construct a scheme $W_l$ and glue them together to get $P$. This $W_l$ consists of a $(n_l-1)$-times
thickened $\PR^1_k$ for $E_l$ and for ever $E_{j_i}$
($1 \le i \le t_l$) which is directly connected by an edge to $E_l$ we add a $(n_{j_i}-1)$-times thickened affine arm at
$0$, $\infty$ or $1$. That is, for the reduction we get the following picture:
\[\begin{tikzpicture}[scale=1.5]
\draw (0,0) ellipse (1 and .5);	
\draw (1,-1) -- (1,.45);		
\draw (.95,.45) rectangle (1.05,.55);
\draw (1,.55) -- (1,1);
\draw (-1,-1) -- (-1,.45);	
\draw (-1.05,.45) rectangle (-.95,.55);
\draw (-1,.55) -- (-1,1);
\draw (0,-.25) -- (0,.775);	
\draw (-.025,.775) rectangle (.025,.825);
\draw (0,.825) -- (0,1.25);
\filldraw [black]
	(1,0) circle (1pt) node[left] {$0$}		
	(0,.5) circle (1pt) node[below right] {$1$}	
	(-1,0) circle (1pt) node[right] {$\infty$};	
\draw (0,-.5) node[below] {$E_l$};	
\draw (1,.25) node[right] {$E_{j_1}$};	
\draw (-1,.25) node[left] {$E_{j_2}$};	
\draw (0,.75) node[right] {$E_{j_3}$};	
\end{tikzpicture}\]
The affine charts of $W_l$ are given as the spectra of $R_{l,i} = k[x_{l,i},y_{l,i},(y_{l,i} -1)^{-1}]/\mathrm{Rel}_{l,i}$ ($i=0,1$)
Where $\mathrm{Rel}_{l,i}$ is given by the fact that $E_l=V(y_{l,0}^{n_l}) \cup V(y_{l,1}^{n_l})$ and $E_{j_1} = V(x_{l,0}^{n_{j_1}})$.
If we have $t_l \ge 2 $ then $E_{j_2} = V(x_{l,1}^{n_{j_2}})$ and finally, if $t_l=3$, then
$E_{j_3}=V( (x_{l,0}-1)^{n_{j_3}}) \cup V( (x_{l,1}-1)^{n_{j_3}})$. 
Those two charts are glued via $x_{l,0}x_{l,1} -1$ and $y_{l,0} - x_{l,1}^{\nu_l}y_{l,1}$.

We need to invert the $y_{l,i} -1$ because there we may glue an other $E_j$ to one of the $E_{j_i}$.
For all practical calculations we need later, this can be ignored, because inverting this elements is just a localization,
and those commute for example with taking Kähler differentials.

Now we want to glue the $W_l$. For this it suffices to give the glueing for $W_l$ and $W_{j_i}$.
For this let $j=j_i$, $\OS{x}_{l,i_l}=x_{l,i_l}-1$ if $i=3$ in $W_l$ and $\OS{x}_{l,i_l}=x_{l,i_l}$ else, and analogously
for $\OS{x}_{j,i_j}$ depending on $l=3$ in $W_j$. We set
$\OS{A}_{lj} = k[\OS{x}_{l,i_l},y_{l,i_l},\OS{x}_{j,i_j},y_{j,i_j},(y_{l,i_l}-1)^{-1},(y_{j,i_j}-1)^{-1}]$
and glue via:
\begin{EQ}\fseqlabel{eq_charts_glueing_P}
\OS{R}_{lj} = \OS{A}_{lj}/(\OS{x}_{j,i_j} - y_{l,i_l}, y_{j,i_j} - \OS{x}_{l,i_l},\OS{x}_{l,i_l}^{n_j}y_{l,i_l}^{n_l})
\end{EQ}
\begin{defin}\fslabel{def_plumbing}
The \emph{plumbing scheme} for $\Gamma$ is the scheme $P$ we obtain from the $W_l$ with the gluing above. By removing the
last equation we see that it is embedded into a smooth surface $X_P$.
\end{defin}
\begin{leme}\fslabel{lem_local_iso_to_Wl}
Let $Z$ be a scheme such that $\Gamma_Z$ is potentially taut, and let $P$ the plumbing scheme for $\Gamma_Z$.
Then $Z$ and $P$ are locally isomorphic.
\end{leme}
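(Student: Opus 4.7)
The plan is a local case analysis. Since $\Gamma_Z$ is potentially taut, $\red{Z} = E_1 \cup \cdots \cup E_n$ is a transversal union of smooth rational curves without triple points. Every $x \in Z$ therefore lies either on a unique component $E_l$ at a smooth point of $\red{Z}$, or at the transversal node of two components $E_l \cap E_j$. In each case I will exhibit a Zariski-open neighbourhood $U \ni x$ in $Z$ and a Zariski-open $V$ in $P$ together with an isomorphism $U \iso V$.

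For a smooth point $x$ of $\red{Z}$, I use that the ambient surface $X$ (smooth over $k$ after replacing $S$ by its algebraization as in Section 2, or by construction when $Z$ arises from Proposition \ref{prop_real_exist_for_graph}) is smooth over the algebraically closed field $k$. I pick a regular system of parameters $(f,g) \in \MO_{X,x}$ with $f$ a local equation for $E_l$, and I choose $g$ so that, after restricting to an affine open $U \ni x$ avoiding a chosen reference point of $E_l$, the restriction $g|_{U \cap E_l}$ is the pullback of the affine coordinate $x_{l,0}$ on the chart $W_l$ of $P$. Such a lift exists because $U$ is affine and $E_l \subseteq X$ is closed, so the restriction $\MO_U \to \MO_{U \cap E_l}$ is surjective, and any degree-one rational function on $E_l \iso \PR^1$ with a simple zero at $x$ serves as $g|_{E_l}$. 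After shrinking $U$ further, the map $(f,g)\colon U \to \A^2_k$ is étale. Its restriction is an étale morphism $U \cap Z \to V(f^{n_l}) \subseteq \A^2_k$ whose induced map on reductions $U \cap E_l \to V(f) \subseteq \A^1_k$ is, by the coordinate choice, a Zariski-open immersion.

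For a node $x \in E_l \cap E_j$ the argument is parallel. I pick $(f,g)$ a regular system of parameters at $x$ with $f$ and $g$ local equations for $E_l$ and $E_j$ respectively, and I arrange, using the same surjection, that $(f,g)$ matches the pair of coordinates $(y_{l,i_l}, x_{l,i_l})$ occurring in the gluing formula (\ref{eq_charts_glueing_P}) near the gluing point of $W_l$ and $W_j$. The resulting étale morphism $U \cap Z \to V(f^{n_l} g^{n_j}) \subseteq \A^2_k$ is again an open immersion on reductions.

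The remaining step, in both cases, is the principal difficulty: promoting an étale morphism of thickenings to an open immersion using only the information that the induced map of reductions is an open immersion. This rests on the standard fact that the small étale site of a scheme coincides with that of its reduction, so the open immersion on reductions lifts uniquely to an open immersion of thickenings. Once this has been applied, it is a direct unwinding of the definition of $W_l$ and the gluing (\ref{eq_charts_glueing_P}) to identify $U \cap Z$ with the expected Zariski-open of $P$, yielding the desired local isomorphism.
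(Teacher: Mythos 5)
There is a genuine gap: you prove a strictly weaker statement than the one the lemma needs, and in doing so you bypass its actual content. What you establish is that every \emph{point} of $Z$ has a small Zariski neighbourhood isomorphic to an open of $P$, via étale coordinates and the fact that an étale morphism whose reduction is an open immersion is an open immersion. That step is essentially sound (it is a repackaging of Proposition \ref{prop_local_thicken}), but the charts $W_l$ of the plumbing scheme are not small affine opens: each contains the \emph{complete} central curve $E_l \iso \PR^1_k$ with its thickening and the attached arms. What the lemma must produce — and what the paper's proof produces — is, for each $l$, an open $V_l \subset Z$ containing all of $E_l$ together with an isomorphism $V_l \iso W_l$. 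This chart-level version is what is used downstream: Remark \ref{rem_lem_cohom_quot_for_Z} needs the generalized stalks $\Theta_{Z,E_l}$, which are direct limits over neighbourhoods of the whole curve $E_l$, to agree with $\Theta_{P,E_l}$; and Propositions \ref{prop_defo_exists} and \ref{prop_van_plumbing_impl_van} quote from the proof of this lemma the explicit normal form of the regluing data $a_{x,l,j}$, $a_{y,l,j}$, $p_{x,l,j}$, $p_{y,l,j}$, which your argument does not yield. A pointwise local isomorphism gives none of this.

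The tell-tale sign is that your version barely uses the hypothesis: a pointwise local model $V(y^{n_l})$ or $V(x^{n_j}y^{n_l})$ exists for any $Z$ with smooth rational components and transversal crossings, with no constraint on the number $t_l$ of neighbours, whereas the chart-level statement needs $t_l\le 3$ (to move the attachment points to $0,\infty,1$ by $\PGL_2$ and identify $\red{(V_l)}$ with $\red{(W_l)}$) and then a genuinely global step: extending that identification through the successive thickenings of the complete curve $E_l$. Gluing your pointwise isomorphisms along $E_l$ is obstructed by a class in $\check{H}^1$ of the automorphism sheaf $\SAut_C(B)$ of Section 2, and killing it is exactly where the degree estimates on $\PR^1$ from Corollaries \ref{cor_relevant_h1} and \ref{cor_vanish_by_num} (using $p_a(E_l)=0$, $\nu_l\ge 1$, $t_l\le 3$) enter. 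Your proposal never confronts this global-along-$E_l$ step, so the lemma as stated and used is not proved. (A secondary, fixable point: at a node you need the local equations $f$ of $E_l$ and $g$ of $E_j$ to restrict \emph{simultaneously} to degree-one functions on the opposite branches; this requires correcting a given generator by a unit lifted from the branch, not just the surjectivity of $\MO_U\to\MO_{U\cap E_l}$.)
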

\begin{proof}
We want to show, that for every $W_l$ we find a $V_l \subset Z$ with $V_l \iso W_l$.
Because we may transfer any three points on a $\PR_k^1$ to $0$, $\infty$ and $1$, we easily get
$V_l$ such that $\red{(V_l)}$ is isomorphic to $\red{(W_l)}$.

Now we want to extend the isomorphism between $\red{(W_l)}$ and $\red{(V_l)}$ to one between $W_l$ and $V_l$.
We do this as in the previous section. That is, we thicken either the $\PR^1_k$-part or one of the affine arms
from the $n$-th to the $(n+1)$-th infinitesimal neighbourhood and show that we can extend the isomorphism.

First we observe that extending at the affine parts is always possible because we can always extend locally on each affine
arm via Proposition \ref{prop_local_thicken}, and this glues because the extensions are trivial on the $\PR^1_k$-part
because there is simply nothing to extend.
So we first extend at the $\PR^1_k$-part as much as needed, and then simply extend at the affine parts.
The only difficult step for the $\PR^1_k$-part is the first one. For this one has to calculate the
$\check{H}^1(W_l,\Aut_{\red{(W_l)}}(W_l))$ by hand. We omit this local calculation which shows that two such schemes are isomorphic
if and only if the $\nu_l$ are equal.

Now with $\nu_l \ge 1$ and $t_l \le 3$ and $B_{l_0} \iso \PR_K^1$ we can use Corollary \ref{cor_vanish_by_num} with $<0$ replaced by
$\le 1$ to show that all cohomology groups in Corollary \ref{cor_relevant_h1} vanish.
This shows that we can indeed choose $W_l$ and $V_l$ isomorphic.

Moreover, if one uses the established isomorphism between $W_l$ and $V_l$, then one knows, that $Z$ must be isomorphic to a scheme
we get by gluing the $W_l$ in an other kind then for $P$. By calculating the automorphisms of
$\OS{A}_{lj}/ (\OS{x}_{l,i_l}^{n_j}y_{l,i_l}^{n_l})$ one gets that this glueing is given by changing the relations of
\eqref{eq_charts_glueing_P}
into $\OS{x}_{j,i_j} - y_{l,i_l}(a_{y,l,j}+ \OS{x}_{l,i_l}y_{l,i_l}p_{y,l,j})$ and
$y_{j,i_j} -\OS{x}_{l,i_l}(a_{x,l,j}+ \OS{x}_{l,i_l}y_{l,i_l}p_{x,l,j})$
with
$a_{x,l,j},a_{y,l,j} \in k^{\times}$ and $p_{x,l,j},p_{y,l,j} \in \OS{A}_{lj}$
\end{proof}
For the coming calculation we need the following variant of the well-known Mayer--Vietoris sequence.
With the standard notations for Čech cohomology, we have:
\begin{prop}\fslabel{prop_mayer-vietoris}
Let $X$ be a separated space, and $\F$ a sheaf of groups on $X$ and $I$ a totally ordered set.
Further let $\U =\{(U_i)\}_{i \in I}$ be an open covering of $X$. There is an exact sequence
\[ 0 \lRa H^0(X,\F) \lRa \prod\limits_{i \in I} H^0(U_i, \F|_{U_j}) \lRa Z^1(\U, \F) \Ra H^{1}(X,\F) \lRa
\prod\limits_{i \in I} H^1(U_i, \F|_{U_j}) \]
\end{prop}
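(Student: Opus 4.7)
The plan is to realise $H^1(X,\F)$ as the pointed set of isomorphism classes of $\F$-torsors on $X$, so that all the ``coboundary'' manipulations become explicit computations with local sections of torsors and elements of $\F$. This is essential because $\F$ is only assumed to be a sheaf of (possibly non-abelian) groups, so the usual spectral-sequence argument is not directly available and the exactness must be understood in the pointed-set sense beyond $Z^1$.

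First I would describe the four maps explicitly. The first is restriction to each $U_i$. The second sends a family $(s_i)_i\in\prod_i H^0(U_i,\F|_{U_i})$ to the $1$-cocycle given by $g_{ij}=s_i|_{U_i\cap U_j}\cdot s_j|_{U_i\cap U_j}^{-1}$; the cocycle identity $g_{ij}g_{jk}=g_{ik}$ is then immediate. The third map sends a cocycle $(g_{ij})\in Z^1(\U,\F)$ to the isomorphism class of the $\F$-torsor obtained by gluing the trivial $\F|_{U_i}$-torsors along the $g_{ij}$, which is well-defined thanks to the cocycle condition. The last map is restriction of torsors to each $U_i$.

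Next I would verify exactness at each of the four inner positions. Injectivity at $H^0(X,\F)$ and exactness at $\prod_i H^0(U_i,\F|_{U_i})$ both follow directly from the sheaf axiom applied to the cover $\U$. At $Z^1(\U,\F)$, a cocycle $(g_{ij})$ goes to the basepoint of $H^1(X,\F)$ precisely when the associated torsor admits a global section, and such a global section is the same datum as a family $(s_i)$ of local sections of $\F$ on the $U_i$ satisfying $g_{ij}=s_is_j^{-1}$, i.e.\ exactly when $(g_{ij})$ lies in the image of the previous map.

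The main technical step, and the place where the non-abelian bookkeeping is most error-prone, will be exactness at $H^1(X,\F)$. A torsor $T$ restricts to the trivial torsor on each $U_i$ iff it admits local sections $t_i\in\Gamma(U_i,T)$; on the overlaps the unique $g_{ij}\in\Gamma(U_i\cap U_j,\F)$ with $t_i|_{U_i\cap U_j}=g_{ij}\cdot t_j|_{U_i\cap U_j}$ then assembles into a $1$-cocycle in $Z^1(\U,\F)$ whose image in $H^1(X,\F)$ recovers $[T]$. The main obstacle is keeping the side-conventions (left vs.\ right action, order of multiplication and of inversion) consistent across all four maps so that the composites really do hit the basepoint; this is essentially the only place where care is required. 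Separatedness of $X$ plays no essential role in the argument itself, but it is convenient for ensuring that the intersections $U_i\cap U_j$ behave well in the intended applications.
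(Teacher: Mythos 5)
Your proposal is correct and follows essentially the same route as the paper: both arguments rest on interpreting $H^{1}(X,\F)$ as the pointed set of isomorphism classes of $\F$-torsors, with exactness at $Z^{1}(\U,\F)$ and at $H^{1}(X,\F)$ checked via the existence of global, respectively local, sections of the associated torsors. The only cosmetic difference is that the paper splices the standard Čech sequence $0\to H^{0}\to\prod H^{0}(U_i)\to Z^{1}(\U,\F)\to\check{H}^{1}(\U,\F)\to 0$ with the injection $\check{H}^{1}(\U,\F)\to H^{1}(X,\F)$ (quoting Grothendieck for the latter), whereas you verify the same exactness statements directly; the content is identical.
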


\begin{proof}
From the definition of Čech cohomology, we get an exact sequence
\begin{EQ}\fseqlabel{seq_mv_part1}
0 \lRa H^0(X,\F) \lRa \prod\limits_{i \in I} H^0(U_i, \F|_{U_i}) \lRa Z^1(\U, \F)
\stackrel{\lambda}{\lRa} \check{H}^1(\U, \F) \lRa 0
\end{EQ}
This sequence gives us the first three terms of our sequence.
Then by Proposition 5.1.1 of \cite{grothendieck1955general},
we know that the natural map
$\tau: \check{H}^1(\U, \F) \Ra H^1(X,\F)$ is injective.
Now we interpret $H^1(X,\F)= \check{H}^1(X, \F)$ as the group of $\F$-torsors.
If we have a $\F$-torsor, then by restricting to $U_i$ we get a $\F|_{U_i}$-torsor.
If we take the direct sum over all these restrictions, we get a map $H^{1}(X,\F) \Ra \prod\limits_{i \in I} H^1(U_i, \F|_{U_i})$,
and the kernel of this map are exactly the torsors trivialized by $\U$. But those are given by $\check{H}^1(\U, \F)$.
Summarizing we get that
\begin{EQ}\fseqlabel{seq_mv_part2}
0 \lRa \check{H}^1(\U, \F) \stackrel{\tau}{\lRa} H^1(X,\F) \lRa \prod\limits_{j \in I_n} H^1(U_j, \F|_{U_j})
\end{EQ}
is exact and thus also the concatenation of \eqref{seq_mv_part1} and \eqref{seq_mv_part2} via $\tau \circ \lambda$,
which is the Mayer--Vietoris sequence we wanted.
\end{proof}


Now we show that, at least if all $n_l$ are prime to $p$, we may calculate $H^1(P,\Theta_P)$
by calculating the rank of a matrix $M_P$ over $k$ (Recall that $\Theta_P=\SHom_{\MO_P}(\kd{P}, \MO_P)$).
For this we first review Laufers proof that $H^1(P, \Theta_P)$
is isomorphic to the quotient of two --- a priori infinite dimensional --- $k$-vector spaces, and the reduction of
this quotient to the quotient of two finite dimensional $k$-vector spaces. During this we look at the differences between
$p=1$ and $p>1$. Finally we construct the matrix $M_P$.

First we want to use the Mayer--Vietoris sequence to reduce the calculation of $H^1(P,\Theta_{P})$ to a quotient.
For this we set $I=\{1,.. .,n\}$ and $\U  = \{W_l\}_{l \in I}$.
Then Proposition \ref{prop_mayer-vietoris} provides us with an exact sequence
\begin{EQ}[rcl]\fseqlabel{seq_mv_for_P}
0 &\lRa& H^0(P,\Theta_{P}) \lRa \bigoplus\limits_{l =1}^n H^0(W_l, \Theta_{P}|_{W_l}) \lRa Z^1(\U, \Theta_{P})\\
&\lRa& H^{1}(P,\Theta_{P}) \lRa \bigoplus\limits_{l=1}^{n} H^1(W_l, \Theta_{P}|_{W_l})
\end{EQ}
Explicit calculations (which we again omit) show, that one has $H^1(W_l, \Theta_{P}|_{W_l})=0$ if and only if $\gcd(p,n_l)=1$.
So the last term of \eqref{seq_mv_for_P} vanishes if and only if all $\gcd(p,n_l)=1$.

For the third term of \eqref{seq_mv_for_P} we take a direct limit:
Choose a decreasing system of open coverings $\U^j=\{U^j_l\}$, $j \ge 0$ such that for every $l$ we have
$E_l \subset U_l^j \subset W_l$ and $E_l =\bigcap\limits_{j\in \N} U_l^j$.
Then by taking direct limit we get for the third term of \eqref{seq_mv_for_P}:
\begin{EQ}\fseqlabel{eq_dir_sum_stalks}
\limindo{j \in \N} Z^1(\U^j, \Theta_{P}) =
\bigoplus\limits_{\underset{x_{l_0,l_1} \in E_{l_0} \cap E_{l_1}}{(l_0,l_1)\in I^{2}}} \Theta_{P, x_{l_0,l_1}}
\end{EQ}
And for the second term we define \emph{the generalized stalk of $\Theta_{P}$ at the closed subset $E_l$} as
\begin{EQ}\fseqlabel{eq_def_general_stalk}
\Theta_{P,E_l} = \limindo{j \in \N} H^0(U_l^j, \Theta_{P}).
\end{EQ}
So if we use the Mayer--Vietoris argument \eqref{seq_mv_for_P} for every $\U^j$ and take the direct limit, we get an exact sequence
\begin{EQ}[rcl]\fseqlabel{seq_mv_for_P_after_limit}
0 &\lRa& H^0(P,\Theta_{P}) \lRa \bigoplus\limits_{l =1}^n \Theta_{P,E_l} \overset{\rho_P}{\lRa}
\bigoplus\limits_{\underset{x_{l_0,l_1} \in E_{l_0} \cap E_{l_1}}{(l_0,l_1)\in I^{2}}} \Theta_{P, x_{l_0,l_1}} 
\lRa H^{1}(P,\Theta_{P})\\
&\lRa& \bigoplus\limits_{\underset{\gcd(p,n_l)\not=1}{l \in I}} \limindo{j \in \N} H^1(U_l^j, \Theta_{P}|_{U_l^j})
\end{EQ}
in particular, we get the reduction we wanted:
\begin{leme}\fslabel{lem_cohom_quot}
If $\gcd(p,n_l)=1$ for all irreducible components $E_l$, then
\begin{EQ}\fseqlabel{cal_H^1}
H^{1}(P,\Theta_{P})  \iso \bigoplus\limits_{\underset{x_{l_0,l_1} \in E_{l_0} \cap E_{l_1}}{(l_0,l_1)\in I^{2}}}
     \Theta_{P, x_{l_0,l_1}}\bigg/\rho_P(\bigoplus\limits_{l =1}^n \Theta_{P,E_l})
\end{EQ}
\end{leme}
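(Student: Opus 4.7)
The plan is to read off the isomorphism directly from the Mayer--Vietoris exact sequence \eqref{seq_mv_for_P_after_limit} that was established just above the statement. Under the hypothesis $\gcd(p, n_l) = 1$ for every irreducible component $E_l$, the indexing set $\{l \in I : \gcd(p, n_l) \neq 1\}$ appearing in the final term of \eqref{seq_mv_for_P_after_limit} is empty, so that term is automatically zero. Hence \eqref{seq_mv_for_P_after_limit} collapses to the five-term exact sequence
\[0 \lRa H^0(P, \Theta_P) \lRa \bigoplus_{l=1}^n \Theta_{P, E_l} \overset{\rho_P}{\lRa} \bigoplus_{\underset{x_{l_0, l_1} \in E_{l_0} \cap E_{l_1}}{(l_0, l_1) \in I^2}} \Theta_{P, x_{l_0, l_1}} \lRa H^1(P, \Theta_P) \lRa 0,\]
which identifies $H^1(P, \Theta_P)$ with the cokernel of $\rho_P$. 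This is exactly the description \eqref{cal_H^1} asserted by the lemma.

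The one point that really needs checking is that the final term of \eqref{seq_mv_for_P_after_limit} is indeed indexed only over those $l$ with $p \mid n_l$; equivalently, that $\limindo{j} H^1(U_l^j, \Theta_P|_{U_l^j}) = 0$ whenever $\gcd(p, n_l) = 1$, and not merely the vanishing $H^1(W_l, \Theta_P|_{W_l}) = 0$ quoted from the explicit calculation after \eqref{seq_mv_for_P}. This follows from the same local computation: the coherent sheaf $\Theta_P|_{W_l}$ is supported on the thickened $\PR^1_k$-core together with the finitely many affine arms of $W_l$, and this support is already contained in every shrinking neighbourhood $U_l^j$ of $E_l$, so the computation giving $H^1(W_l, \Theta_P|_{W_l}) = 0$ applies verbatim to each $U_l^j$. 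Hence each group in the directed system vanishes and so does its colimit. This is the only place where the coprimality hypothesis is used; everything else is formal manipulation of the sequence \eqref{seq_mv_for_P_after_limit}, so I do not expect any further obstacle.
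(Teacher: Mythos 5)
Your proposal coincides with the paper's (implicit) argument: the lemma is read off directly from the exact sequence \eqref{seq_mv_for_P_after_limit}, whose final term is an empty direct sum under the coprimality hypothesis, so $H^{1}(P,\Theta_P)$ is the cokernel of $\rho_P$. One caveat on your supporting remark: the support of $\Theta_P|_{W_l}$ is all of $W_l$ and is \emph{not} contained in the shrinking neighbourhoods $U_l^j$ (the affine arms get truncated as $j$ grows); the correct reason that $\limindo{j\in\N} H^1(U_l^j,\Theta_P|_{U_l^j})$ vanishes for $\gcd(p,n_l)=1$ is that the arms are one-dimensional and affine, so all of the $H^1$ is carried by the thickened $\PR^1_k$-core, which \emph{is} contained in every $U_l^j$ --- and this vanishing is exactly what the paper has already built into the indexing of the last term of \eqref{seq_mv_for_P_after_limit}.
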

\begin{rem}\fslabel{rem_lem_cohom_quot_for_Z}
If we do this also for $H^{1}(Z,\Theta_{Z})$ the terms we get in \eqref{eq_dir_sum_stalks} and \eqref{eq_def_general_stalk}
are isomorphic to those of $P$. So the only term in \eqref{seq_mv_for_P_after_limit} which differs is
the map $\rho_P$ which is replaced by a map $\rho_Z$ and the difference depends on the differences in the glueing of $Z$ and $P$.

In particular one might reformulate Lemma \ref{lem_cohom_quot} for $Z$.
\end{rem}
Now we want to reduce the calculation of the quotient in \eqref{cal_H^1} to a quotient of finite dimensional vector spaces.
For this we look at the elements of $\Theta_{P, x_{l,j}}$ for $E_{l} \cap E_{lj} \not=\emptyset$ and show that all but finitely many
of them are always in the image of $\rho_P$. Every element of this $\Theta_{P, x_{l,j}}$ is of the form
\begin{EQ}\fseqlabel{elements_Theta_P,x_l,j}
     \suml_{s =\delta_j} \suml_{t=0}\alpha_{st} x_{l,i}^sy_{l,i}^t\frac{\del}{\del x_{l,i}}
     + \suml_{ u = 0} \suml_{v = \delta_l} \beta_{uv} x_{l,i}^uy_{l,i}^v\frac{\del}{\del y_{l,i}}
\end{EQ}
with $\delta_l=1$ if $\gcd(n_l,p)=1$ and $0$ else, $\alpha_{st},\beta_{uv} \in k$ and $i$ equals $0$ or $1$,
depending on the chart of $W_l$ in which we find $x_{l,j}$.
To simplify the notation, for the next two paragraphs we assume without any loss of generality  $i=0$ and $j=j_1$.

Like in the last paragraph of Page 85 of \cite{MR0367277} we get the following two lemmata reducing the elements of
\eqref{elements_Theta_P,x_l,j} which are relevant for the calculation of $H^1(P,\Theta_{P})$ to only finitely many:
\begin{leme}\fslabel{lem_fixed_contrib_Pl}
For all $a\ge n_l$, $b  \ge 0$ there are elements $f,g \in \Theta_{P,E_l}$ with
\[\rho_P(f)=y_{l,0}^ax_{l,0}^{\delta_{j_1} +b} \frac{\del}{\del x_{l,0}}\text{ and } \rho_P(g)=y_{l,0}^ax_{l,0}^{b} \frac{\del}{\del y_{l,0}}\]
in $\Theta_{P, x_{l,j}}$ and $\rho_P(f)= \rho_P(g)= 0$ at every other stalk $\Theta_{P, x_{i_1,i_2}}$.
\end{leme}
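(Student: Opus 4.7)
The plan is to exhibit, for each pair $(a,b)$ with $a \ge n_l$ and $b \ge 0$, an explicit section on $W_l$ (representing an element of $\Theta_{P,E_l}$) with the prescribed germ at $x_{l,j_1}$ and vanishing germs at the other intersection stalks, by downward induction on $b$. The base case is $b \ge n_{j_1} - \delta_{j_1}$: then $y_{l,0}^a x_{l,0}^{\delta_{j_1}+b}$ lies in the ideal $(y_{l,0}^{n_l} x_{l,0}^{n_{j_1}})$, so the target germ is already zero in the stalk $\Theta_{P, x_{l,j_1}}$, and $f = 0 = g$ trivially works.

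For the inductive step I would work in chart $0$ of $W_l$ and consider the candidate
\[
v \;=\; y_{l,0}^a\, x_{l,0}^{\delta_{j_1}+b}\, H(x_{l,0})\, \frac{\partial}{\partial x_{l,0}},
\]
where $H(x_{l,0})$ is the product of $(x_{l,0}-1)^{n_{j_3}}$ if an $E_{j_3}$ meets $E_l$ at $x_{l,0}=1$ in chart $0$ (with analogous factors for any other component lying in that chart), and $H\equiv 1$ otherwise. A direct computation using $a \ge n_l$, $\delta_{j_1} + b \ge 1$ or $p \mid n_{j_1}$, and the explicit form of $\mathrm{Rel}_{l,0}$, shows that $v$ sends the defining ideal into itself, i.e.\ is a derivation on $R_{l,0}$. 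The crucial global observation is that on the overlap of the two charts of $W_l$, one inverts $x_{l,0}$, turning the non-$y$ factor of $\mathrm{Rel}_{l,0}$ into a unit, whence $y_{l,0}^{n_l}=0$ on the overlap; since $a \ge n_l$, this forces $v = 0$ there, so $v$ extends to all of $W_l$ by the zero vector field on chart $1$.

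With this extension, the germs at the intersections distinct from $x_{l,j_1}$ are automatic: at $x_{l,j_2}$ (in chart $1$) the extension is zero, while at $x_{l,j_3}$ (in chart $0$) the factor $y_{l,0}^a (x_{l,0}-1)^{n_{j_3}}$ lies in the ideal $(y_{l,0}^{n_l}(x_{l,0}-1)^{n_{j_3}})$ defining the local ring at that point. At $x_{l,j_1}$, expanding $(x_{l,0}-1)^{n_{j_3}}$ around $x_{l,0}=0$ gives
\[
v \;=\; (-1)^{n_{j_3}} y_{l,0}^a x_{l,0}^{\delta_{j_1}+b} \frac{\partial}{\partial x_{l,0}}
\;+\; \sum_{k=1}^{n_{j_3}} \binom{n_{j_3}}{k}(-1)^{n_{j_3}-k}\, y_{l,0}^a x_{l,0}^{\delta_{j_1}+b+k} \frac{\partial}{\partial x_{l,0}}.
\]
Each summand with $k \ge 1$ is of the same form with the $b$-index strictly increased, hence is realized by some $f_k \in \Theta_{P,E_l}$ by the inductive hypothesis. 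Subtracting off $(-1)^{n_{j_3}} \sum_k \binom{n_{j_3}}{k}(-1)^{n_{j_3}-k} f_k$ from $(-1)^{n_{j_3}} v$ produces the desired $f$. The construction of $g$ is entirely parallel, replacing $v$ by $y_{l,0}^a x_{l,0}^b H(x_{l,0})\,\partial/\partial y_{l,0}$; here the derivation check uses $a \ge n_l > \delta_l$, which is exactly the reason the second summand of \eqref{elements_Theta_P,x_l,j} begins with $v = \delta_l$.

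The main point requiring care is the case distinction as $t_l$ ranges over $\{1,2,3\}$ and as the affine arms are distributed between the two charts of $W_l$; in each subcase one must pick $H$ so that the chart-$0$ vector field is a derivation, while the overlap identity $y_{l,0}^{n_l} = 0$ makes the extension to chart $1$ both forced and equal to zero. Apart from this bookkeeping, the construction is algorithmic and parallels Laufer's argument on page~85 of \cite{MR0367277}.
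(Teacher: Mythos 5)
Your construction is essentially the argument the paper defers to (Laufer's explicit vector fields on a single chart of $W_l$, extended by zero to the other chart, followed by a downward induction on $b$ to strip off the unit $H(x_{l,0})$ at the point $x_{l,j_1}$), and it works. One justification needs repair, though: the claim that inverting $x_{l,0}$ turns the whole non-$y$ factor of $\mathrm{Rel}_{l,0}$ into a unit, so that $y_{l,0}^{n_l}=0$ on the chart overlap, is false when $t_l=3$ --- the arm $E_{j_3}$ sits at $x_{l,0}=1$, which lies in the overlap, so the overlap relation is $y_{l,0}^{n_l}(x_{l,0}-1)^{n_{j_3}}=0$ and not $y_{l,0}^{n_l}=0$. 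The conclusion you need still holds, but for a different reason: your $v$ carries the factor $H(x_{l,0})=(x_{l,0}-1)^{n_{j_3}}$, so its coefficient $y_{l,0}^a x_{l,0}^{\delta_{j_1}+b}(x_{l,0}-1)^{n_{j_3}}$ lies in the overlap ideal precisely because $a\ge n_l$ \emph{together with} the full power of $(x_{l,0}-1)$ supplied by $H$; this, not the vanishing of $y_{l,0}^{n_l}$ alone, is what legitimizes the extension by zero (for $t_l\le 2$ your stated reason is correct as is). With that correction, the derivation check, the vanishing of the germs at $x_{l,j_2}$ and $x_{l,j_3}$, and the downward induction on $b$ (terminating at $\delta_{j_1}+b\ge n_{j_1}$, where the target germ is already zero in $\Theta_{P,x_{l,j_1}}$) all go through as you describe.
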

\begin{leme}\fslabel{lem_fixed_contrib_Pj}
For all $a \ge 0$, $b \ge n_{j}$ there are elements $f,g \in \Theta_{P,E_j}$ with
\[\rho_P(f)=y_{l,0}^{a}x_{l,0}^{b} \frac{\del}{\del x_{l,0}}\text{ and } \rho_P(g)=y_{l,0}^{\delta_l+a}x_{l,0}^{b} \frac{\del}{\del y_{l,0}}\]
in $\Theta_{P, x_{l,j}}$ and $\rho_P(f)= \rho_P(g)= 0$ at every other stalk $\Theta_{P, x_{i_1,i_2}}$.
\end{leme}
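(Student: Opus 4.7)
My plan is to deduce Lemma \ref{lem_fixed_contrib_Pj} from Lemma \ref{lem_fixed_contrib_Pl} by exchanging the roles of $E_l$ and $E_j$ across the glueing between $W_l$ and $W_j$. In the chart of $W_j$ containing $x_{l,j}$, the component $E_j$ is the thickened $\PR^1_k$-part of multiplicity $n_j$ and $E_l$ is an affine arm of multiplicity $n_l$; in suitable coordinates $(x_j, y_j)$ this means $E_j = V(y_j^{n_j})$ and $E_l = V(x_j^{n_l})$. By the glueing relations \eqref{eq_charts_glueing_P}, these coordinates are related to $(x_{l,0}, y_{l,0})$ by $x_j = y_{l,0}$ and $y_j = x_{l,0}$ (up to an affine shift at the point $1$, which does not affect the monomial form of the derivations we wish to produce); consequently $\frac{\del}{\del x_j} = \frac{\del}{\del y_{l,0}}$ and $\frac{\del}{\del y_j} = \frac{\del}{\del x_{l,0}}$.

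Under this coordinate swap the target derivations read
\[
y_{l,0}^a x_{l,0}^b \frac{\del}{\del x_{l,0}} = x_j^a y_j^b \frac{\del}{\del y_j}, \qquad
y_{l,0}^{\delta_l + a} x_{l,0}^b \frac{\del}{\del y_{l,0}} = x_j^{\delta_l + a} y_j^b \frac{\del}{\del x_j}.
\]
Setting $a' := b \geq n_j$ and $b' := a \geq 0$, these are precisely the derivations supplied by Lemma \ref{lem_fixed_contrib_Pl} applied to $W_j$, with $E_j$ playing the role of the main component and $E_l$ playing the role of the distinguished arm $E_{j_1}$; in particular the shift $\delta_{j_1}$ from that lemma becomes our $\delta_l$, since in both cases the shift is attached to the partial derivative in the arm direction, and the arm on the $W_j$ side is $E_l$. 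Invoking Lemma \ref{lem_fixed_contrib_Pl} therefore provides $f, g \in \Theta_{P, E_j}$ with the prescribed germs at $x_{l,j}$. Vanishing at the remaining intersection stalks $\Theta_{P, x_{j, l'}}$ along $E_j$ is part of the conclusion of that lemma, and vanishing at stalks $\Theta_{P, x_{i_1, i_2}}$ with $j \notin \{i_1, i_2\}$ is automatic, since the $E_j$-summand of $\rho_P$ only sees intersection points where $E_j$ is one of the two intersecting components.

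The main obstacle is purely bookkeeping: I have to track how the shift $\delta$ and the distinction between main-direction and arm-direction partials are transported across the glueing of the two $W$-charts, and to verify that the affine shift at the point $1$ (when the arm $E_l$ attaches there) really does not disturb the monomial form of the resulting derivations. Once these identifications are pinned down, no new construction is required: the proof of Lemma \ref{lem_fixed_contrib_Pj} is just Lemma \ref{lem_fixed_contrib_Pl} read in the chart on the other side of the glueing.
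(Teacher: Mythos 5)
Your reduction is correct: the glueing relation \eqref{eq_charts_glueing_P} identifies $\OS{x}_{j,i_j}=y_{l,i_l}$ and $y_{j,i_j}=\OS{x}_{l,i_l}$, so Lemma \ref{lem_fixed_contrib_Pj} is literally Lemma \ref{lem_fixed_contrib_Pl} read from the $W_j$ side with the roles of the main-component and arm coordinates (and hence of $f$ and $g$, and of $\delta_{j_1}$ and $\delta_l$) exchanged, and the vanishing at the other stalks carries over for the reason you give. The paper omits both proofs with a single citation to Laufer precisely because of this symmetry, so your argument is essentially the intended one; the only point deserving the care you already flag is that the "without loss of generality $i=0$, $j=j_1$" convention must be invoked again on the $W_j$ side when $E_l$ attaches at $\infty$ or at $1$.
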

This shows: For the calculation of $H^1(P, \Theta_P)$, we only have to know whether for all $l$ the following finitely many elements
of $\Theta_{P, x_{l,j}}$ are in the image of $\rho_p$:
\begin{EQ}\fseqlabel{eq_rel_stalk}
     \suml_{s = \delta_{j}}^{ n_{j} -1} \suml_{t = 0}^{ n_l-1} \alpha_{st} x_{l,0}^sy_{l,0}^t\frac{\del}{\del x_{l,0}}
     + \suml_{ u = 0}^{n_{j}-1} \suml_{v = \delta_l}^{n_l-1} \beta_{uv} x_{l,0}^uy_{l,0}^v\frac{\del}{\del y_{l,0}}
\end{EQ}
Now we have a closer look at the remaining elements of $ \Theta_{P,E_l}$. These are only finitely many, but depending on the value
of $t_l$ we get different lists. For better readability we assume $\gcd(p,n_l)=1$ for all $l$.
If $\gcd(p,n_l)\not=1$ for some $l$, then the lists remain finite, but we get some extra terms.
For the calculations we use the given covering for the $W_l$.

Depending on $t_l$ the elements of the generalized stalk $\Theta_{P,E_l}$ are contained in the following lists:
In all three cases the $\frac{\del}{\del y_{l,0}}$ are with $0 <b$ and $0\le a \le \nu_l (b-1)$ given by:
\begin{EQ}\fseqlabel{eq_items_PEl_y}
     x_{l,0}^ay_{l,0}^b \frac{\del}{\del y_{l,0}} = x_{l,1}^{\nu_l(b-1)-a} y_{l,1}^b\frac{\del}{\del y_{l,1}} 
\end{EQ}
For $\frac{\del}{\del x_{l,0}}$ we have look at $t_l$.
For $t_l=1,2$ we have with $0 \le b$ and $ 0< a \le (\nu_l b +1)$:
\begin{EQ}\fseqlabel{eq_items_PEl_x_tl12}
     x_{l,0}^ay_{l,0}^b \frac{\del}{\del x_{l,0}} = -x_{l,1}^{\nu_l b-a+2} y_{l,1}^b\frac{\del}{\del x_{l,1}}
     + \nu_l x_{l,1}^{\nu_l b-a+1} y_{l,1}^{b+1}\frac{\del}{\del y_{l,1}}
\end{EQ}
For $t_l=1$ we have additionally for $0 \le b$:
\begin{EQ}\fseqlabel{eq_items_PEl_x_tl1}
y_{l,1}^b \frac{\del}{\del x_{l,1}} = -x_{l,0}^{\nu_l b+2} y_{l,0}^b\frac{\del}{\del x_{l,0}}
     + \nu_l x_{l,0}^{\nu_l b+1} y_{l,0}^{b+1}\frac{\del}{\del y_{l,0}}
\end{EQ}
Finally, for $t_l=3$ we have for $0 < b$ and $0 < a \le \nu_l b$:
\begin{EQ}\fseqlabel{eq_items_PEl_x_tl3}
     x_{l,0}^ay_{l,0}^b(x_{l,0}-1) \frac{\del}{\del x_{l,0}} = x_{l,1}^{\nu_l b-a+1} y_{l,1}^b(x_{l,1}-1)\frac{\del}{\del x_{l,1}}
     - \nu_l x_{l,1}^{\nu_l b-a} y_{l,1}^{b+1}(x_{l,1}-1)\frac{\del}{\del y_{l,1}}
\end{EQ}
From this and Lemma \ref{lem_cohom_quot} we immediately get the following proposition:
\begin{prop}\fslabel{prop_h1_plumbing}
If $\gcd(p,n_l)=1$ for all $l$, then $H^1(P,\Theta_P) =0$ if and only if the image of
\eqref{eq_items_PEl_y}, \eqref{eq_items_PEl_x_tl1}, \eqref{eq_items_PEl_x_tl12} or \eqref{eq_items_PEl_x_tl3} under $\rho_P$
 generates all elements of the form \eqref{eq_rel_stalk}.
\end{prop}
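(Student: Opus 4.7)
My plan is to combine Lemma \ref{lem_cohom_quot} with the reduction Lemmas \ref{lem_fixed_contrib_Pl} and \ref{lem_fixed_contrib_Pj}, and then use the explicit classification of the generalized stalks.

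First, under the assumption $\gcd(p,n_l)=1$ for every $l$, Lemma \ref{lem_cohom_quot} identifies $H^1(P,\Theta_P)$ with the cokernel of $\rho_P$. So the task reduces to deciding when $\rho_P$ is surjective onto $\bigoplus_{(l_0,l_1)} \Theta_{P,x_{l_0,l_1}}$.

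Next, I would show that within each stalk $\Theta_{P,x_{l,j}}$, every element of the general form \eqref{elements_Theta_P,x_l,j} differs, modulo the image of $\rho_P$, from an element of the bounded form \eqref{eq_rel_stalk}. Concretely, an arbitrary series has a part with exponents $(s,t)$ satisfying $t \ge n_l$ or $s \ge n_j + \delta_{j}$ and a remaining part with exponents bounded by $n_j-1$ and $n_l-1$. The unbounded part can be killed term by term: Lemma \ref{lem_fixed_contrib_Pl} produces elements of $\Theta_{P,E_l}$ whose image under $\rho_P$ lives only in the stalk at $x_{l,j}$ and realises the monomials with $t \ge n_l$; Lemma \ref{lem_fixed_contrib_Pj} does the same for the monomials with $s \ge n_j$ using elements of $\Theta_{P,E_j}$. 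Since these contributions vanish at every other intersection stalk, the subtraction can be performed simultaneously over all stalks, leaving an element in \eqref{eq_rel_stalk}. Therefore $H^1(P,\Theta_P)$ vanishes if and only if every collection of elements of the form \eqref{eq_rel_stalk} lies in the image of $\rho_P$.

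To finish, I would observe that $\Theta_{P,E_l}$, as a $k$-vector space, is spanned by two kinds of elements: (a) those handled in Lemmas \ref{lem_fixed_contrib_Pl} and \ref{lem_fixed_contrib_Pj}, which we have already used up for the reduction above and whose further contribution to \eqref{eq_rel_stalk} is zero; (b) the remaining generators, which are precisely the ones listed in \eqref{eq_items_PEl_y}, \eqref{eq_items_PEl_x_tl12}, \eqref{eq_items_PEl_x_tl1} (when $t_l=1$) and \eqref{eq_items_PEl_x_tl3} (when $t_l=3$). This completeness follows from expanding an arbitrary section of $\Theta_P$ over one of the two affine charts of $W_l$ and imposing the glueing $x_{l,0}x_{l,1}=1$, $y_{l,0}=x_{l,1}^{\nu_l}y_{l,1}$: the condition that a monomial in chart $0$ extend regularly into chart $1$ (possibly after inverting $y_{l,i}-1$ when $t_l=3$) exactly cuts out the given exponent ranges. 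Hence the image of $\rho_P$ restricted to the bounded parts \eqref{eq_rel_stalk} is spanned by the images of the generators in (b), and the proposition follows.

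The main obstacle in this plan is the last step, namely the claim that the four lists \eqref{eq_items_PEl_y}–\eqref{eq_items_PEl_x_tl3} really exhaust $\Theta_{P,E_l}$ up to (a). This is a direct but somewhat tedious local calculation: one writes down the module of derivations of $R_{l,0}$ and $R_{l,1}$, uses $\gcd(p,n_l)=1$ to guarantee the absence of extra torsion contributions (which is exactly the role of the hypothesis), and checks that glueing forces precisely the stated bounds $0<b$, $0\le a\le\nu_l(b-1)$ etc. Once this bookkeeping is in place, the rest of the argument is purely formal.
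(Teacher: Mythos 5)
Your proposal is correct and follows essentially the same route as the paper: Lemma \ref{lem_cohom_quot} reduces the question to surjectivity of $\rho_P$, Lemmas \ref{lem_fixed_contrib_Pl} and \ref{lem_fixed_contrib_Pj} cut the stalks down to the bounded elements \eqref{eq_rel_stalk}, and the explicit lists \eqref{eq_items_PEl_y}--\eqref{eq_items_PEl_x_tl3} describe what $\rho_P$ can still contribute there. The local chart computation you flag as the remaining bookkeeping is exactly the calculation the paper also asserts but omits.
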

A nice consequence of this proposition is that it provides a way to actually calculate $h^1(P,\Theta_P)$.
For this we construct a $r_P \times c_P$ matrix $M_P$ over $k$ in the following way:
For every point $x_{l,j}$ and every element of \eqref{eq_rel_stalk} we add one row to $M_P$. Then for every $P_l$ and every Element of
\eqref{eq_items_PEl_y}, \eqref{eq_items_PEl_x_tl1}, \eqref{eq_items_PEl_x_tl12} or \eqref{eq_items_PEl_x_tl3} we add a column to $M_P$.
The entries in $M_P$ are simply the coefficients of the element associated to the column as an expansion in the element associated
to the row. Note that, by construction,
the entries of $M_P$ are integers.
Also by the construction of $M_P$ we get the following corollary of Proposition \ref{prop_h1_plumbing}:
\begin{cor}\fslabel{cor_h1_plumbing_rank}
If $\gcd(p,n_l)=1$ for all $l$, then $h^1(P,\Theta_P) = r_P - \rank(M_P)$
\end{cor}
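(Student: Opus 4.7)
The plan is simply to unpack Proposition \ref{prop_h1_plumbing} into a concrete linear algebra statement and read off the cokernel dimension from $M_P$. Everything finite has already been prepared: the quotient \eqref{cal_H^1} becomes, after applying Lemmas \ref{lem_fixed_contrib_Pl} and \ref{lem_fixed_contrib_Pj} together with the $t_l$-dependent enumeration of $\Theta_{P,E_l}$ preceding \eqref{eq_items_PEl_y}, a quotient of two finite-dimensional $k$-vector spaces.

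More precisely, first I would introduce the finite-dimensional target
\[
T \;=\; \bigoplus_{\substack{(l,j)\\ x_{l,j}\in E_l\cap E_j}} T_{l,j},
\]
where $T_{l,j}$ is the span of the monomial tangent vectors of the form \eqref{eq_rel_stalk} at $x_{l,j}$. By the very definition of $r_P$ as the total number of such monomials, $\dim_k T = r_P$, and these monomials form the basis of $T$ labelling the rows of $M_P$.
Next I would introduce the finite-dimensional source $S$ whose basis is the disjoint union over all $l$ of the explicit generators of $\Theta_{P,E_l}$ listed in \eqref{eq_items_PEl_y}, \eqref{eq_items_PEl_x_tl12}, \eqref{eq_items_PEl_x_tl1} and \eqref{eq_items_PEl_x_tl3}; these label the columns of $M_P$. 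By construction $M_P$ is the matrix, in these two bases, of the restriction $\rho_P^{\mathrm{fin}}\colon S \to T$ of $\rho_P$ followed by projection onto $T$.

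The key reduction step is to observe that, under the hypothesis $\gcd(p,n_l)=1$ for all $l$, Lemmas \ref{lem_fixed_contrib_Pl} and \ref{lem_fixed_contrib_Pj} identify the contributions outside $T$ with elements already in the image of $\rho_P$, so that the canonical map from the cokernel of $\rho_P^{\mathrm{fin}}$ to the quotient \eqref{cal_H^1} is an isomorphism. Combined with Lemma \ref{lem_cohom_quot} this gives
\[
H^1(P,\Theta_P) \;\iso\; T \big/ \operatorname{im}(\rho_P^{\mathrm{fin}}).
\]
The corollary then follows from the elementary identity
\[
\dim_k \bigl(T/\operatorname{im}(\rho_P^{\mathrm{fin}})\bigr) \;=\; \dim_k T - \rank(M_P) \;=\; r_P - \rank(M_P).
\]

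There is no real obstacle: the nontrivial input is Proposition \ref{prop_h1_plumbing} and the reduction lemmata, which are already available. The only thing to verify carefully is that no generator from \eqref{eq_items_PEl_y}--\eqref{eq_items_PEl_x_tl3} has been double-counted and that no relevant monomial of \eqref{eq_rel_stalk} has been omitted when forming $M_P$; both are immediate from the construction once one keeps track of the chart-change formulae used in \eqref{eq_items_PEl_y}--\eqref{eq_items_PEl_x_tl3}.
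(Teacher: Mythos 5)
Your argument is correct and is exactly the route the paper takes: the paper states the corollary as an immediate consequence of the construction of $M_P$, Lemma \ref{lem_cohom_quot}, and the reduction via Lemmas \ref{lem_fixed_contrib_Pl} and \ref{lem_fixed_contrib_Pj} to the finitely many monomials of \eqref{eq_rel_stalk}, which is precisely your identification of $H^1(P,\Theta_P)$ with the cokernel of the finite matrix followed by rank--nullity. You have merely spelled out the details the paper leaves implicit.
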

\begin{rem}
Proposition \ref{prop_h1_plumbing} and Corollary \ref{cor_h1_plumbing_rank} work analogously for $H^1(Z,\Theta_Z)$,
but $M_Z$ is in practice much harder to write down explicitly than $M_P$.
\end{rem}
As a consequence of the corollary we get the following comparison between $p=1$ and $p>1$:
\begin{prop}\fslabel{prop_dim_h1_goes_up}
Let $P_1$ be a plumbing scheme over $\C$, and for all $p>1$ with $\gcd(p,n_l)=1$ for all $l$ let $P_p$ be the plumbing scheme for the
same dual graph over an algebraically closed field of characteristic $p$. Then we have
\[h^1(P_1,\Theta_{P_1}) \le h^1(P_p,\Theta_{P_p})\]
and equality for all but finitely many $p$.
\end{prop}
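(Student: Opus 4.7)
The plan is to apply Corollary \ref{cor_h1_plumbing_rank} to both $P_1$ and $P_p$ and then reduce the statement to a semicontinuity property of the rank of integer matrices. The key observation is that the combinatorial data defining $M_P$ (the number of rows, the number of columns, and each entry) depend only on the dual graph $\Gamma$, not on the ground field.

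First I would show that the matrix dimensions $r_P$ and $c_P$ depend only on $\Gamma$. Rows are indexed by the points $x_{l,j}$ and the elements listed in \eqref{eq_rel_stalk}; the latter count depends only on the $n_l$ and the $\delta_l$, and under our hypothesis $\gcd(p,n_l) = 1$ all $\delta_l = 1$, so the row count is the same for $P_1$ and for every $P_p$ satisfying the hypothesis. Columns are indexed by the generators coming from \eqref{eq_items_PEl_y}, \eqref{eq_items_PEl_x_tl12}, \eqref{eq_items_PEl_x_tl1}, \eqref{eq_items_PEl_x_tl3}, whose cardinality depends only on the $n_l$, the $\nu_l$ and the $t_l$. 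Next I would inspect the entries: inspection of the four right-hand sides in \eqref{eq_items_PEl_y}–\eqref{eq_items_PEl_x_tl3} shows that every coefficient is either $\pm 1$ or $\pm \nu_l$, hence an integer independent of $p$. Therefore there exists a single integer matrix $M_\Gamma \in \mathrm{Mat}_{r \times c}(\mathds{Z})$ such that $M_{P_1}$ is $M_\Gamma$ regarded over $\C$, and $M_{P_p}$ is $M_\Gamma \bmod p$ regarded over an algebraic closure of $\mathds{F}_p$.

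With this identification in hand, the statement becomes the classical semicontinuity of rank: for any integer matrix $M$ one has $\rank_{\overline{\mathds{F}_p}}(M \bmod p) \le \rank_{\C}(M)$, with equality for every prime $p$ that does not divide one of the maximal nonzero minors of $M$. Since $M_\Gamma$ has only finitely many such minors, the set of bad primes is finite. Applying Corollary \ref{cor_h1_plumbing_rank} on both sides (its hypothesis $\gcd(p,n_l) = 1$ is exactly the one assumed in the proposition), we obtain
\[
h^1(P_1,\Theta_{P_1}) = r - \rank(M_{P_1}) \le r - \rank(M_{P_p}) = h^1(P_p,\Theta_{P_p}),
\]
with equality for all but finitely many $p$.

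The main obstacle is verifying that $M_{P_1}$ and $M_{P_p}$ are really given by reduction of a single integer matrix, i.e.\ that the explicit formulas producing the entries involve no hidden denominators or characteristic-dependent choices. This is purely an inspection of equations \eqref{eq_items_PEl_y}–\eqref{eq_items_PEl_x_tl3} together with \eqref{eq_rel_stalk}, and once the hypothesis $\gcd(p,n_l) = 1$ is used to remove the $\delta_l$-ambiguity, it is a matter of bookkeeping. After that, the conclusion is pure linear algebra over $\mathds{Z}$.
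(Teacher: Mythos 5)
Your proposal is correct and follows essentially the same route as the paper: both reduce via Corollary \ref{cor_h1_plumbing_rank} to the observation that $M_{P_p}$ is the reduction of the integer matrix $M_{P_1}$ modulo $p$, and then invoke semicontinuity of rank via minors, with the finitely many bad primes being those dividing the gcd of the nonvanishing maximal minors. Your additional verification that the entries of the matrix carry no hidden characteristic dependence (once $\gcd(p,n_l)=1$ fixes the $\delta_l$) is a point the paper asserts by construction but does not spell out.
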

\begin{proof}
By Corollary \ref{cor_h1_plumbing_rank} we have
$h^1(P_p,\Theta_{P_p}) = r_{P_p} - \rank(M_{P_p})$. By construction we get $M_{P_p}$ for $p>1$ if we take all entries of $M_{P_1}$
modulo $p$. In particular $r_p$ is independent of $p$. Now $\rank(M_{P_1})=m$ is equivalent to the existence of one non-vanishing
$m \times m$ minor, and all $(m+1)\times (m+1)$ minors vanish. But the minors of $M_{P_p}$ are just the minors of $M_{P_1}$ modulo $p$,
so the rank can only decrease, thus the $h^1(P_p,\Theta_{P_p})$ can only increase.

Finally the rank decreases if and only if $p>1$ divides all $m \times m$ minors of $M_{P_1}$, so it
decreases for exactly the prime factors of the gcd of all non vanishing $m \times m$ minors of $M_{P_1}$.
\end{proof}

Our goal is to show, that $H^1(P,\Theta_P) =0$ implies that every $Z$ combinatorially equivalent to $P$ is already isomorphic
to $P$.
We prove this later, but now we are able to prove that $H^1(P,\Theta_P) =0$ already implies
$H^1(Z,\Theta_Z) =0$ for all $Z$ combinatorial equivalent to $P$, which of course is a necessary condition for
$Z$ to be isomorphic to $P$:
\begin{prop}\fslabel{prop_van_plumbing_impl_van}
If $\gcd(p,n_l)=1$ for all $l$, and we have $H^1(P,\Theta_P) =0$, then we have $H^1(Z,\Theta_Z) =0$ for all $Z$
combinatorial equivalent to $P$.
\end{prop}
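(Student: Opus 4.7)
The plan is to compare the cokernels of $\rho_Z$ (cf.\ Remark \ref{rem_lem_cohom_quot_for_Z}) and $\rho_P$ via the matrix description of Corollary \ref{cor_h1_plumbing_rank}. By Lemma \ref{lem_local_iso_to_Wl} I fix isomorphisms $W_l \iso V_l \subset Z$ for each $l$; these identify $\Theta_{Z,E_l}$ with $\Theta_{P,E_l}$, and each intersection stalk $\Theta_{Z,x_{l,j}}$ with $\Theta_{P,x_{l,j}}$ in two a priori distinct ways, via the $V_l$-chart and via the $V_j$-chart. The composite of one identification with the inverse of the other is an automorphism $\sigma_{l,j}$ of $\Theta_{P,x_{l,j}}$ encoding the gluing discrepancy described at the end of the proof of Lemma \ref{lem_local_iso_to_Wl}. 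Under these identifications $\rho_Z$ and $\rho_P$ become maps between the same finite-dimensional $k$-vector spaces (Lemmas \ref{lem_fixed_contrib_Pl} and \ref{lem_fixed_contrib_Pj} apply to $Z$ verbatim and cut things down to the monomials of \eqref{eq_rel_stalk}), so in particular $r_Z = r_P$.

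The next step is to analyse $\sigma_{l,j}$ on the relevant generators. Substituting the modified transitions $x_{j,i_j} \mapsto y_{l,i_l}(a_{y,l,j}+\OS{x}_{l,i_l}y_{l,i_l}p_{y,l,j})$ and $y_{j,i_j} \mapsto \OS{x}_{l,i_l}(a_{x,l,j}+\OS{x}_{l,i_l}y_{l,i_l}p_{x,l,j})$ into the formulas \eqref{eq_items_PEl_y}--\eqref{eq_items_PEl_x_tl3}, one finds that each generator of $\Theta_{P,E_l}$, evaluated in $\Theta_{P,x_{l,j}}$ under the $Z$-identification, equals its $P$-evaluation scaled by a power of the unit $a_{x,l,j}$ or $a_{y,l,j}$, plus a sum of monomials of strictly larger total degree in $(x_{l,0},y_{l,0})$. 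In matrix language, after ordering rows and columns by ascending total degree of the associated monomial, $M_Z = M_P\cdot D + N$ where $D$ is invertible diagonal over $k$ and $N$ is strictly triangular with respect to the degree filtration. A triangular elimination then yields $\rank(M_Z) \ge \rank(M_P)$, and combined with $r_Z = r_P$ and Corollary \ref{cor_h1_plumbing_rank} this gives $h^1(Z,\Theta_Z) \le h^1(P,\Theta_P) = 0$.

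The main technical obstacle is the combinatorial bookkeeping needed to verify the triangular structure of $N$. One must choose a single ordering of rows and columns that simultaneously handles all lists \eqref{eq_items_PEl_y}--\eqref{eq_items_PEl_x_tl3} and every intersection pair $(l,j)$, and then confirm that the extra contributions from $\OS{x}_{l,i_l}y_{l,i_l}p_{y,l,j}$ and $\OS{x}_{l,i_l}y_{l,i_l}p_{x,l,j}$ really only populate rows whose associated monomial has strictly larger total degree than the diagonal entry of the corresponding column. This is essentially forced by the explicit form of the modification (the extra factor of $\OS{x}_{l,i_l}y_{l,i_l}$ always raises the total degree by at least two), but it does require a careful case analysis for the two affine charts of each $W_l$ and for the three possible values of $t_l$; once this bookkeeping is done the rank inequality and hence the proposition follow at once.
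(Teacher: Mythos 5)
Your overall strategy --- compare $\rho_Z$ with $\rho_P$ through the gluing discrepancy and exploit that this discrepancy is ``unit times the identity plus strictly higher-order terms'' --- is exactly the paper's strategy, but two steps of your execution have genuine gaps. First, Lemma \ref{lem_fixed_contrib_Pj} does \emph{not} transfer to $Z$ verbatim: the elements it produces live in $\Theta_{\cdot,E_j}$ and are evaluated at $x_{l,j}$ precisely through the regluing, so under $\rho_Z$ they acquire the unit factor and the correction term of \eqref{eq_roh_Z_f}--\eqref{eq_roh_Z_g}. Establishing the lemma for $Z$ is itself a downward induction on $a$ (starting at $a=n_l-1$, where the correction $y_{l,i_l}^{a+1}\OS{x}_{l,i_l}^{b}R_f$ dies because $\OS{x}_{l,i_l}^{n_j}y_{l,i_l}^{n_l}=0$), and this is a necessary first stage of the proof, not a parenthetical remark.

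Second, the linear-algebra step fails as stated. The decomposition $M_Z=M_P\cdot D+N$ with $D$ an invertible diagonal acting on columns is not available: the unit $a_{y,l,j}^{a+1}a_{x,l,j}^{b}$ in \eqref{eq_roh_Z_f} depends on the \emph{target} monomial, so a column whose $P$-image is a sum of several monomials (as in \eqref{eq_items_PEl_x_tl12}) is not rescaled by one scalar; moreover each generator contributes to several stalks, untwisted at the points seen through its own chart and twisted at the others, so no column-wise scaling captures the change. Even granting the decomposition, ``$N$ strictly triangular for the degree filtration'' does not imply $\rank(M_Z)\ge\rank(M_P)$: take $M_P=\left(\begin{smallmatrix}1&1\\1&0\end{smallmatrix}\right)$ with rows ordered by increasing degree, $D$ the identity and $N=\left(\begin{smallmatrix}0&0\\0&1\end{smallmatrix}\right)$; then $N$ only populates rows of strictly larger degree than the leading entries of the corresponding columns, yet $\rank(M_P+N)=1<2$. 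What rescues the argument is not a generic triangularity but the order of events in the paper's proof: one first secures surjectivity of $\rho_Z$ onto all monomials with $a\ge n_l$ or $b\ge n_j$ independently, using the single generators of Lemmas \ref{lem_fixed_contrib_Pl} and \ref{lem_fixed_contrib_Pj} whose corrections fall into already-secured monomials; only then are the finitely many remaining monomials of \eqref{eq_rel_stalk} treated by decreasing bidegree, where surjectivity of $\rho_P$ supplies a preimage and the corrections, having strictly larger $y$-degree and no smaller $x$-degree, already lie in the image of $\rho_Z$ and can be subtracted off. You should replace the rank comparison by this two-stage downward induction.
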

\begin{proof}
By Lemma \ref{lem_cohom_quot} we have to prove that the surjectivity of $\rho_P$ on every $\Theta_{P,x_{l,j}}$ implies the
surjectivity of $\rho_Z$ on every $\Theta_{Z,x_{l,j}}$.
By Remark \ref{rem_lem_cohom_quot_for_Z} we know that the only difference between  $\rho_P$ and  $\rho_Z$ is the gluing.
To make this precise: We know that $\Theta_{P,x_{l,j}} \iso \Theta_{Z,x_{l,j}}$, and they are as
$k[\OS{x}_{l,i_l}, y_{l,i_l}]/( \OS{x}_{l,i_l}^{n_j}y_{l,i_l}^{n_l})$-module
generated by $\OS{x}_{l,i_l}\frac{\del}{\del \OS{x}_{l,i_l}}$ and $y_{l,i_l}\frac{\del}{\del y_{l,i_l}}$.
Now, for all $f=\OS{x}_{l,i_l}^ay_{l,i_l}^b \frac{\del}{\del \OS{x}_{l,i_l}} \in \Theta_{P,E_l}$
we have $\rho_P(f)=\rho_Z(f)=\OS{x}_{l,i_l}^ay_{l,i_l}^b \frac{\del}{\del \OS{x}_{l,i_l}}$, and the same with
$\frac{\del}{\del y_{l,i_l}}$. In particular, Lemma \ref{lem_fixed_contrib_Pl} stays true with $\rho_Z$ instead of $\rho_P$.

Next we want to look at the image of $\Theta_{P,E_j}$ in $\Theta_{P,x_{l,j}}$. Suppose we have a $f \in \Theta_{P,x_{l,j}}$
with $\rho_P(f)=y_{l,i_l}^{a}\OS{x}_{l,i_l}^{b} \frac{\del}{\del \OS{x}_{l,i_l}}$ which we also may write as
$\OS{x}_{j,i_j}^{a}y_{j,i_j}^{b} \frac{\del}{\del y_{j,i_j}}$. But then by a calculation using the charts
of given in the proof of Lemma \ref{lem_local_iso_to_Wl} we get:
\begin{EQ}\fseqlabel{eq_roh_Z_f}
\rho_Z(f)=\OS{x}_{j,i_j}^{a}y_{j,i_j}^{b} \frac{\del}{\del y_{j,i_j}}
=a_{y,l,j}^{a+1} y_{l,i_l}^a a_{x,l,j}^b\OS{x}_{l,i_l}^b \frac{\del}{\del \OS{x}_{l,i_l}} + y_{l,i_l}^{a+1} \OS{x}_{l,i_l}^{b}R_f
\end{EQ}
with some $R_f$.
Analogously, if we have some $g \in \Theta_{P,x_{l,j}}$ with
\[\rho_P(g)=y_{l,i_l}^{a}\OS{x}_{l,i_l}^{b} \frac{\del}{\del y_{l,i_l}}=x_{j,i_j}^{a}y_{j,i_j}^{b} \frac{\del}{\del \OS{x}_{j,i_j}}\]
then we have
\begin{EQ}[rcl]\fseqlabel{eq_roh_Z_g}
\rho_Z(f)&=&  \OS{x}_{j,i_j}^{a}y_{j,i_j}^{b} \frac{\del}{\del \OS{x}_{j,i_j}}\\
&=& a_{y,l,j}^{a} y_{l,i_l}^a a_{x,l,j}^b\OS{x}_{l,i_l}^b (a_{x,l,j}^2\OS{x}_{l,i_l}^2 p_{y,j,l}\frac{\del}{\del \OS{x}_{l,i_l}}+
a_{y,l,j}\frac{\del}{\del y_{l,i_l}}) + y_{l,i_l}^{a+1} \OS{x}_{l,i_l}^{b}R_g
\end{EQ}
Now we want to prove that we have Lemma \ref{lem_fixed_contrib_Pj} for $Z$. For this, let $b \ge n_j$.
Because we have Lemma \ref{lem_fixed_contrib_Pl} for $Z$, we only have to care for $a < n_l$. For $a = n_l-1$ the terms
$y_{l,i_l}^{a+1} \OS{x}_{l,i_l}^{b}R_f$ and $y_{l,i_l}^{a+1} \OS{x}_{l,i_l}^{b}R_g$ vanish. But $a_{y,l,j}$ and $a_{x,l,j}$ 
are units in $k$, so \eqref{eq_roh_Z_f} shows us that $ y_{l,i_l}^a\OS{x}_{l,i_l}^b \frac{\del}{\del \OS{x}_{l,i_l}}$ is in the
image of $\rho_Z$, and with this \eqref{eq_roh_Z_g} shows that also
$ y_{l,i_l}^a\OS{x}_{l,i_l}^b \frac{\del}{\del y_{l,i_l}}$ is in the image of $\rho_Z$. So by doing inverse induction on $a$ we see
that we have Lemma \ref{lem_fixed_contrib_Pj} for $Z$.

It remains to show that the surjectivity of $\rho_P$ implies, that for $a < n_l$ and $b< n_l$ also
$y_{l,i_l}^a\OS{x}_{l,i_l}^b\frac{\del}{\del \OS{x}_{l,i_l}}$ and $y_{l,i_l}^a\OS{x}_{l,i_l}^b\frac{\del}{\del y_{l,i_l}}$
are in the image of $\rho_Z$. But with \eqref{eq_roh_Z_f} and \eqref{eq_roh_Z_g} this follows analogously to the argumentation before.
We only have to to a double inverse induction on $a+b$: We start with $a = n_l-1$ and $b=n_j-1$. In each step we reduce $a$ until
$a=0$ and then we reduce $b$ by one and start again with $a = n_l-1$.
\end{proof}
\begin{rem}
The inverse of this theorem does not hold. There is a counterexample with $H^1(P,\Theta_P) =\C$ but $H^1(Z,\Theta_Z) =0$ of Laufer
(\cite{MR0367277}, §4 end of page 93).
\end{rem}
To show that $H^1(P,\Theta_P) =0$ implies $\CEQs{P}=\{[(P,X_P)]\}$ we need the deformation theory of $P$.
Recall that for a $k$-scheme $X$, a \emph{deformation $\eta$ of $X$ over $(S,s)$} is a cartesian diagram
\[\xymatrix{
\ar@{}[d]|{\eta:} &X \ar[r] \ar[d] & \X \ar[d]^{\pi}\\
& \spec(k) \ar[r]^s & S
}\]
where $\pi$ is flat and surjective, $S$ is connected and $s$ is a $k$-rational point of $S$.
We say that $\eta$ is \emph{locally trivial} if for every point $x \in X$ we find an open
neighbourhood $U_x \subset X$ such that the induced deformation of $U_x$
\[\xymatrix{
\ar@{}[d]|{\eta|_{U_x}:} &U_x \ar[r] \ar[d] & \X|_{U_x} \ar[d]^{\pi}\\
& \spec(k) \ar[r]^s & S
}\]
is isomorphic to the trivial deformation of $U_x$. Usually locally trivial deformations are of little interest. For example,
if $X$ and $S$ is are smooth curves, $\eta$ being locally trivial implies that every smooth closed fiber of $\pi$ is already isomorphic
to $X$. On the other hand, the functor of locally trivial deformations is better understandable,
in particular if we assume the schemes $S$ to be spectra of artinian rings. 

So for a $k$-scheme $X$ we define the following functor from the category of connected schemes together with a
$k$-rational point to sets:
\[\cdef'_X(S,s)=\{\text{locally trivial deformations of $X$ over $(S,s)$}\}/\text{isomorphism}\]
Now, for a cycle supported on the exceptional locus we always find a locally trivial deformation into the plumbing scheme
for its dual graph: 
\begin{prop}\fslabel{prop_defo_exists}
Let $P$ be the plumbing scheme for a potentially taut dual graph, and take $[(Z,X)] \in \CEQs{P}$.
Then there exists an integral affine scheme
$Y$, a $k$-rational point $ y' \in Y$ and $\eta \in \cdef'_{Z}(Y,y)$ with $\pi^{-1}(y') \iso P$.
\end{prop}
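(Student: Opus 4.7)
By Lemma \ref{lem_local_iso_to_Wl} and its proof, the scheme $Z$ is obtained by glueing copies $V_l \iso W_l$ of the local models along each intersection point $x_{l,j}$, via relations of the form
\[
\OS{x}_{j,i_j}-y_{l,i_l}(a_{y,l,j}+\OS{x}_{l,i_l}y_{l,i_l}p_{y,l,j}), \quad y_{j,i_j}-\OS{x}_{l,i_l}(a_{x,l,j}+\OS{x}_{l,i_l}y_{l,i_l}p_{x,l,j}),
\]
with $a_{\cdot,l,j}\in k^{\times}$ and $p_{\cdot,l,j}\in \OS{A}_{lj}$; the plumbing scheme $P$ itself corresponds to the special values $a_{\cdot,l,j}=1$ and $p_{\cdot,l,j}=0$. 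My plan is to interpolate these data linearly in a new parameter $t$, so that $t=1$ recovers the $Z$-glueings and $t=0$ the $P$-glueings. Let $f(t)$ be the product over all edges of the linear factors $1+t(a_{y,l,j}-1)$ and $1+t(a_{x,l,j}-1)$, set $Y:=\spec k[t,f(t)^{-1}]$, $y':=0$ and $y:=1$. Each factor of $f$ is nonzero at $t=0$ and at $t=1$, so $Y$ is an integral affine scheme containing $y$ and $y'$ as $k$-rational points.

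I would then define $\pi\colon\X\to Y$ by glueing the product schemes $W_l\times Y$ along the overlap near each $x_{l,j}$ via the $t$-parameterised relations
\[
\OS{x}_{j,i_j}-y_{l,i_l}\bigl((1+t(a_{y,l,j}-1))+\OS{x}_{l,i_l}y_{l,i_l}\cdot t\cdot p_{y,l,j}\bigr),
\]
and analogously for $y_{j,i_j}$. At the intersection point the Jacobian of this data is
$(1+t(a_{y,l,j}-1))(1+t(a_{x,l,j}-1))$, which is a unit on $Y$, so each glueing is an isomorphism of opens; the cocycle condition is vacuous because no three components of $\red{Z}$ meet at a single point. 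At $t=1$ the relations specialise to the $Z$-data and at $t=0$ to the $P$-data, so $\pi^{-1}(y)=Z$ and $\pi^{-1}(y')\iso P$. Flatness of $\pi$ is immediate because each chart $W_l\times Y\to Y$ is a trivial product.

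Local triviality is essentially built into the construction. Each chart $W_l\times Y$ is an open subscheme of $\X$, and tautologically is the trivial deformation over $Y$ of the open subscheme $V_l:=W_l\cap Z$ of $Z$. Since $\{V_l\}$ covers $Z$, for any $x\in Z$ I would choose an index $l$ with $x\in V_l$ and set $U_x:=V_l$; the induced deformation is then $W_l\times Y\iso U_x\times Y$, manifestly trivial. This produces the desired $\eta\in\cdef'_Z(Y,y)$.

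The only subtle point is the choice of base: a linear interpolation $1+t(a-1)$ of a unit $a\in k^{\times}$ is generally not a unit on all of $\A^1_k$, which is why one must localise $\A^1_k$ at $f$ to make the interpolated glueings well-defined throughout $Y$. After this adjustment the construction is a routine assembly of the local data supplied by Lemma \ref{lem_local_iso_to_Wl}.
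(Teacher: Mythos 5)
Your proposal is correct and follows essentially the same route as the paper: both glue the trivial products $W_l\times Y$ along the intersection neighbourhoods using the transition data of Lemma \ref{lem_local_iso_to_Wl} with the coefficients promoted to functions on an affine base, so that local triviality and flatness are automatic and $Z$ and $P$ appear as fibres over two $k$-rational points. The only difference is cosmetic: the paper takes the base to be a torus (one unit $u_{x,l,j},u_{y,l,j}$ per edge) times an affine plane scaling the higher-order terms, whereas you restrict to a line through the two relevant parameter values, localised so the interpolated units stay invertible.
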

\begin{proof}
From the proof of Lemma \ref{lem_local_iso_to_Wl} we know that for $Z$ the glueing along every $W_{lj} \not=\emptyset$ is done via
$x_{j,i_j} = y_{l,i_l}(a_{y,l,j}+ x_{l,i_l}y_{l,i_l}p_{y,l,j})$ and $y_{j,i_j} = x_{l,i_l}(a_{x,l,j}+ x_{l,i_l}y_{l,i_l}p_{x,l,j})$.

Let $A = k[u_{x,l,j}, u_{y,l,j},u_{x,l,j}^{-1}, u_{y,l,j}^{-1},t_x, t_y]$ (with $lj$ running over all $lj$ such that
$W_{lj} \not= \emptyset$), and $Y=\spec(A)$.
We define $\X$ as follows: We glue the $W_l \times Y$ along the $W_{lj}\times Y$ via
$x_{j,i_j} = y_{l,i_l}(u_{y,l,j}+ x_{l,i_l}y_{l,i_l}p_{y,l,j} t_y)$ and
$y_{j,i_j} = x_{l,i_l}(u_{x,l,j}+ x_{l,i_l}y_{l,i_l}p_{x,l,j}t_x)$
which defines an automorphism, because the right factors are of the form ``invertible + nilpotent''.

Let now $\pi$ be the projection. By construction of $\X$ we have
$P \iso \pi^{-1}(1,1,\dots,1,1,0,0)$ and
$Z \iso \pi^{-1}(a_{y,1,2},a_{x,1,2},\dots,a_{x,l,n},a_{y,l,n},1,1)$.

Now $\pi$ is locally trivial by construction, in particular flat.
\end{proof}
Now we are able to prove that  $H^1(P,\Theta_P) =\{0\}$ implies $\CEQs{P}= \{[P]\}$.
\begin{prop}\fslabel{prop_CEQsP_triv_if_h1_0}
Let $P$ be the plumbing scheme for a potentially taut dual graph with $\gcd(p,n_l)=1$ for all $l$.
If $H^1(P,\Theta_P) =0$, then $\CEQs{P}= \{[(P,X_P)]\}$.
\end{prop}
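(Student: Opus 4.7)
The plan is to use $H^1(P,\Theta_P) = 0$ to show that the locally trivial deformation constructed in Proposition \ref{prop_defo_exists}, which has both $P$ and $Z$ as fibers, is trivial.

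Fix $[(Z,X)] \in \CEQs{P}$. By Proposition \ref{prop_defo_exists} there is a locally trivial deformation $\pi : \X \Ra Y$ over an integral affine scheme $Y$, with $k$-rational points $y', y'' \in Y$ satisfying $\pi^{-1}(y') \iso P$ and $\pi^{-1}(y'') \iso Z$. So it suffices to show that these two fibers are isomorphic. The first step is to observe that the obstruction to extending a given trivialization of $\pi$ over $\spec(\OH{\MO}_{Y,y'}/\mathfrak{m}^{n+1})$ one further infinitesimal order is a Čech $1$-cocycle on $P$ with coefficients in $\Theta_P$, whose class lives in $H^1(P,\Theta_P)$. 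Since this group vanishes, an inductive construction yields a formal trivialization
\[\X \times_Y \spec(\OH{\MO}_{Y,y'}) \iso P \times_k \spec(\OH{\MO}_{Y,y'}).\]
Applying Artin approximation to the Isom-functor $\SIsom_Y(\X, P \times_k Y)$ over the finite-type $k$-scheme $Y$ then produces an étale neighborhood $(U, u') \Ra (Y, y')$ together with an $U$-isomorphism $\X \times_Y U \iso P \times_k U$.

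The main obstacle is propagating this étale-local triviality to the point $y''$. I would package this via the Isom-scheme $I = \SIsom_Y(\X, P \times_k Y)$: the same vanishing of $H^1(P,\Theta_P)$ makes $I \Ra Y$ formally smooth, and the previous step provides a section over $U$, so the image of $I \Ra Y$ is an open set containing $y'$. Here I would exploit the very explicit form of $Y$ in Proposition \ref{prop_defo_exists}, which is an open subset of an affine space, hence irreducible and $k$-rational: one can join $y'$ to $y''$ by an irreducible curve $C \subset Y$, and restricting $\pi$ to $C$ yields a locally trivial deformation over an integral one-dimensional base. Iterating the formal-triviality-plus-Artin-approximation argument at each $k$-rational point of $C$ whose fiber is already known to be isomorphic to $P$, together with the openness of this condition coming from smoothness of $I \Ra Y$, propagates the isomorphism class along $C$ until it reaches $y''$, giving $Z \iso P$.

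The delicate point, which is the algebraic analog of Laufer's use of Kodaira--Spencer stability over contractible analytic neighborhoods, is this propagation step: one must check that the openness of the ``fiber $\iso P$'' locus, combined with the irreducibility of $Y$ (or of the chosen curve $C$), really does force every $k$-rational point to lie in this locus. Any technical difficulty will come from carefully justifying that $I$ is sufficiently well-behaved -- in particular that Artin approximation applies and that smoothness of $I \Ra Y$ is genuine enough to give open image -- but no further cohomological vanishing beyond $H^1(P,\Theta_P) = 0$ should be required.
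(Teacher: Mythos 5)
The first half of your argument (formal triviality at the point with fiber $P$ via vanishing of $H^1(P,\Theta_P)$, followed by algebraization/approximation) is sound and matches the paper's strategy, but the propagation step is a genuine gap. The image of $I=\SIsom_Y(\X,P\times_k Y)\Ra Y$ being open and nonempty in an irreducible base (or curve) $C$ only tells you its complement is a proper closed subset; the point $y''$ with fiber $Z$ may perfectly well lie in that complement, and nothing in your argument rules this out. Your proposed fix --- re-running the formal-triviality argument ``at each $k$-rational point whose fiber is already known to be isomorphic to $P$'' --- never escapes the open locus you already have: to cross its boundary you must run the argument at a point $y_0$ whose fiber is some \emph{a priori unknown} scheme $Z_0$ combinatorially equivalent to $P$, and for that you need $H^1(Z_0,\Theta_{Z_0})=0$. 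This contradicts your closing claim that no cohomological vanishing beyond $H^1(P,\Theta_P)=0$ is required: the extra input is exactly Proposition \ref{prop_van_plumbing_impl_van}, which the paper proves precisely so that the deformation functor of \emph{every} combinatorially equivalent $Z$ is trivial. The same issue infects your assertion that $I\Ra Y$ is formally smooth: the obstruction to lifting an isomorphism over an infinitesimal extension of $y_0$ lives in $H^1$ of the tangent sheaf of the fiber over $y_0$, not of $P$.

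Once you grant Proposition \ref{prop_van_plumbing_impl_van}, the curve and the open--closed propagation become unnecessary, and the argument collapses to what the paper actually does: localize $Y$ at the two points $y',y''$, use triviality of both deformation functors to get
$\X\times_Y\spec(\OH{\MO}_{Y,y'})\iso P\times_k\spec(\OH{\MO}_{Y,y'})$ and
$\X\times_Y\spec(\OH{\MO}_{Y,y''})\iso Z\times_k\spec(\OH{\MO}_{Y,y''})$
(formal triviality plus \cite{MR0217085}, 5.4.1), embed the two fraction fields of the completions into a common overfield $\OS{K}$ of the function field of $Y$ to conclude $Z\times_k\spec(\OS{K})\iso P\times_k\spec(\OS{K})$, and then descend to $k$ via the representable Isom functor and $k=\kk{k}$ --- this last descent step you do have correctly. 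So the skeleton of your proof is right, but it is incomplete without the vanishing of $H^1(Z,\Theta_Z)$ for the other fiber, and the propagation-along-a-curve device should be replaced by the direct two-point comparison at the generic point.
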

\begin{proof}
Let $Z$ be any scheme combinatorially equivalent to $P$.
From Proposition \ref{prop_defo_exists} we get a locally trivial deformation $\eta$
of $Z$ into $P$. Now the base of this deformation is an integral affine scheme, so, via localisation, we may assume that we have
$Y=\spec(R)$, where $R$ is an integral semi-local ring with exactly two maximal ideals $m_1$ and $m_2$.
Let $y_i$ be the point given by $m_i$, and let $X_i=\pi^{-1}(y_i)$. Suppose that we have $X_1 \iso Z$ and $X_2 \iso P$.

Localizing further we get two local rings $(R_1,m_1)$ and $(R_2,m_2)$ both with residue field $k$ and a common quotient field $K$.
Then we have $X_i \iso \X \times_{\spec(R)}\spec(R_i/m_i)$.
Let $\OH{R}_i$ be the completion of $R_i$, and $K_i$ the quotient field of $\OH{R}_i$.
By the universal property of the quotient field we get maps $K \Ra K_i$, and there exists a field $\OS{K}$ containing
$K_1$ and $K_2$.

From this we get for the spectra, using standard properties of the fibre-product:
\begin{EQ}
 \X \times_{\spec(R_1)} \spec(\OH{R}_1)\times_{\spec(\OH{R}_1)} \spec(\OS{K})
\iso  \X \times_{\spec(R_2)} \spec(\OH{R}_2)\times_{\spec(\OH{R}_2)} \spec(\OS{K})
\end{EQ}
From this we get
\[ Z  \times_{\spec(k)} \spec(\OS{K}) \iso P \times_{\spec(k)} \spec(\OS{K}), \]
if we show
\[\X \times_{\spec(R_i)} \spec(\OH{R}_i) \iso X_i  \times_{\spec(k)} \spec(\OH{R}_i),\]
For this we look at the functors $\Def'_P$ and $\Def'_Z$ and restrict them to spectra of local artinian $k$-algebras.
By Theorem 2.4.1 of \cite{MR2247603} the tangent-space of this functors are $H^1(P,\Theta_P)$ and $H^1(Z,\Theta_Z)$ respectively
and thus trivial; the first one by the assumption, the second one by Proposition \ref{prop_van_plumbing_impl_van}.
So by the same Theorem of \cite{MR2247603}, they have a semi-universal element.
Now Proposition 2.2.8 of \cite{MR2247603} tells us $\Def'_P=\Hom(k, \;\;\;) = \Def'_Z$.\
From this we get $\Def'_Z(\OH{R}_1)=\Hom(k,\OH{R}_1)$ and $\Def'_P(\OH{R}_2)=\Hom(k,\OH{R}_2)$,
or in other words, for every $n$ we have:
\[\X \times_{\spec(R_i)} \spec(\OH{R}_i/m_i^{n+1}) \iso  X_i \times_{\spec(k)} \spec(\OH{R}_i/m_i^{n+1})\]
That is, as formal schemes we have $\OH{\X}|_{X_i} \iso  \OH{X_i}|_{X_i}$,
which by \cite{MR0217085}, 5.4.1 gives us
\[\X \times_{\spec(R_i)} \spec(\OH{R}_i) \iso X_i  \times_{\spec(k)} \spec(\OH{R}_i),\]
as wanted.

So we have not yet that $P$ and $Z$ are isomorphic, but we know that they are isomorphic after base change to some field extension
of $k$. Now we want to get the isomorphism between $P$ and $Z$ from this isomorphism. For this 
we take a look at the isomorphism functor mapping a schemes $S$ to $\Iso_k(Z \times_k S, P \times_k S)$
Fortunately, because $Z$ and $P$ are proper, one-dimensional schemes over a field and thus projective,
by \cite{MR1611822} this functor is represented by a scheme $I$ locally of finite type over $k$.
So we know $I(\OS{K}) \not= \emptyset$, thus $I$ is not the empty scheme and thus has a $\kk{k}=k$-rational point.
But this point corresponds to an isomorphism between $Z$ and $P$, which finishes the proof.
\end{proof}
For singularities this has the following consequence:
\begin{prop}\fslabel{prop_h1_0_implies_taut}
Let $S$ be a normal two-dimensional singularity, $\OS{Z}=\suml_{l=1}^n n_l E_l$ an anti-ample divisor for $S$
with $\gcd(p,n_l)=1$ for all $l$.
Further let $\nu$ be the significant multiplicity for $\OS{Z}$.
If $P$ is the plumbing scheme for $\Gamma_{\nu\OS{Z}}$, then $S$ is taut if $H^1(P,\Theta_P)=0$.
\end{prop}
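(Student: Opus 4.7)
The plan is to combine two already-established results of this section: Proposition \ref{prop_CEQsP_triv_if_h1_0}, which controls $\CEQs{P}$ for the plumbing scheme $P$, and Corollary \ref{cor_taut_if_neqs_trivial}, which reduces tautness of $S$ to the triviality of $\CEQs{j_0\OS{Z}}$ for one $j_0 \ge \OS{\nu}$. The idea is to take $j_0 = \nu$, the significant multiplicity itself, and obtain the triviality of $\CEQs{\nu\OS{Z}}$ by routing through the plumbing scheme as an intermediary.

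First I would verify that the hypotheses of Proposition \ref{prop_CEQsP_triv_if_h1_0} are met by $P$. The multiplicities of $\Gamma_{\nu\OS{Z}}$ are the integers $\nu n_l$; these are coprime to $p$, since by assumption $\gcd(p,n_l)=1$ for all $l$, while $\gcd(p,\nu)=1$ is built into Definition \ref{def_sign_multi} (this is precisely the reason for including that clause there). Potential tautness of $\Gamma_{\nu\OS{Z}}$ is implicit in the fact that a plumbing scheme exists for it. Together with the hypothesis $H^1(P,\Theta_P)=0$, Proposition \ref{prop_CEQsP_triv_if_h1_0} then yields $\CEQs{P}=\{[(P,X_P)]\}$.

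Next I would observe that $\nu\OS{Z}$ is combinatorially equivalent to $P$, since both have dual graph $\Gamma_{\nu\OS{Z}}$. Combinatorial equivalence is obviously an equivalence relation on such pairs, so the two sets $\CEQs{P}$ and $\CEQs{\nu\OS{Z}}$ are literally the same set. It is therefore a singleton, and since it contains $[(\nu\OS{Z},X)]$ (where $X$ is the minimal good desingularization of $S$, on which $\nu\OS{Z}$ naturally lives), we obtain $\CEQs{\nu\OS{Z}}=\{[(\nu\OS{Z},X)]\}$. Applying Corollary \ref{cor_taut_if_neqs_trivial} with $j_0=\nu$ (which satisfies $j_0\ge\OS{\nu}$ trivially, since $\nu=\OS{\nu}$ by construction) then gives that $S$ is taut.

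There is no genuine obstacle here beyond bookkeeping: the proof is essentially the concatenation of Proposition \ref{prop_CEQsP_triv_if_h1_0} and Corollary \ref{cor_taut_if_neqs_trivial}, glued by the elementary identification $\CEQs{P}=\CEQs{\nu\OS{Z}}$. The only point that requires attention is the matching of the coprimality conditions on multiplicities, and that was anticipated by the extra clause $\gcd(p,\nu)=1$ in Definition \ref{def_sign_multi}.
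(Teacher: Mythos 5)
Your proof is correct and follows exactly the route of the paper: identify $\CEQs{P}$ with $\CEQs{\nu\OS{Z}}$ via combinatorial equivalence, apply Proposition \ref{prop_CEQsP_triv_if_h1_0} to get the triviality of that set, and conclude with Corollary \ref{cor_taut_if_neqs_trivial}. Your explicit check that the multiplicities $\nu n_l$ of $\Gamma_{\nu\OS{Z}}$ are coprime to $p$ (using the clause $\gcd(p,\nu)=1$ from Definition \ref{def_sign_multi}) is a detail the paper leaves implicit, but it is the right verification.
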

\begin{proof}
By Proposition \ref{prop_CEQsP_triv_if_h1_0} we have $\CEQs{P} = \CEQs{\nu \OS{Z}} = \{[(\nu\OS{Z},X)]\}$, so $S$ is taut
by Corollary \ref{cor_taut_if_neqs_trivial}.
\end{proof}
Finally we are able to prove the next comparison between $p=1$ and $p>1$:
\begin{prop}\fslabel{prop_0_comb_def_implies_most_p_comb_def}
Let $\Gamma$ be the dual graph of some plumbing scheme $P_1$ over $\C$, and for all $p>1$ with $\gcd(p,n_l)=1$ for all $l$ let $P_p$
be the plumbing scheme for $\Gamma$ over an algebraically closed field of characteristic $p$. Then $\CEQs{P_1}=\{[(P_1,X_{P_1})]\}$
implies $\CEQs{P_p}=\{[(P_p,X_{P_p})]\}$ for all but finitely many $p$.
\end{prop}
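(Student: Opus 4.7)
The plan is to globalize the deformation construction of Proposition \ref{prop_defo_exists} to a single family defined over $\spec(\mathds{Z})$, and then apply a spreading-out argument to pass from characteristic zero to positive characteristic.

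First, I would build a universal gluing family. By Lemma \ref{lem_local_iso_to_Wl}, every $Z$ combinatorially equivalent to $P$ arises by gluing the charts $W_l$ via relations
\[
\OS{x}_{j,i_j} = y_{l,i_l}\bigl(a_{y,l,j} + \OS{x}_{l,i_l} y_{l,i_l} p_{y,l,j}\bigr),\qquad
y_{j,i_j} = \OS{x}_{l,i_l}\bigl(a_{x,l,j} + \OS{x}_{l,i_l} y_{l,i_l} p_{x,l,j}\bigr),
\]
with $a_{x,l,j}, a_{y,l,j} \in k^{\times}$ and polynomials $p_{x,l,j}, p_{y,l,j} \in \OS{A}_{lj}$. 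Working modulo $\OS{x}_{l,i_l}^{n_j} y_{l,i_l}^{n_l}$ truncates these polynomials to finitely many monomials, so all possible gluings are parametrized by a smooth integral affine $\mathds{Z}$-scheme $Y_{\mathds{Z}} = \spec(\mathds{Z}[u_{x,l,j}^{\pm 1}, u_{y,l,j}^{\pm 1}, v_{l,j}^{(m)}])$. Gluing $W_l \times_{\mathds{Z}} Y_{\mathds{Z}}$ via the tautological relations, generalizing Proposition \ref{prop_defo_exists} to let all coefficients of the $p_{*,l,j}$ vary as parameters rather than only the single dial $t_x,t_y$, yields a flat family $\pi\colon \X_{\mathds{Z}} \to Y_{\mathds{Z}}$ with projective one-dimensional fibres. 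For every algebraically closed field $k$ of characteristic exponent $p$ with $\gcd(p,n_l) = 1$ for all $l$, the base change $\X_k \to Y_k$ realizes every $Z \in \CEQs{P_k}$ as the fibre over some $k$-rational point.

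Second, consider the isomorphism scheme $I := \Iso_{Y_{\mathds{Z}}}(P_{\mathds{Z}} \times_{\mathds{Z}} Y_{\mathds{Z}},\, \X_{\mathds{Z}})$. Since both families are flat and projective over $Y_{\mathds{Z}}$ with the same Hilbert polynomial, $I$ exists as a scheme of finite type over $Y_{\mathds{Z}}$ by Grothendieck's FGA. Let $\Sigma \subseteq Y_{\mathds{Z}}$ be the image of the structural projection $I \to Y_{\mathds{Z}}$, which is constructible by Chevalley. By construction, the geometric points of $\Sigma$ over an algebraically closed field $k$ are exactly the $y \in Y(k)$ with $\pi^{-1}(y) \iso P_k$, so $\Sigma(k) = Y(k)$ is equivalent to the triviality of $\CEQs{P_k}$. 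Now the hypothesis $\CEQs{P_1} = \{[(P_1, X_{P_1})]\}$ says that every $\C$-fibre of $\pi$ is isomorphic to $P_1$, hence $\Sigma$ contains every $\C$-point of $Y_{\mathds{Q}}$. Since $Y_{\mathds{Q}}$ is an irreducible Jacobson scheme, a constructible subset containing all closed points equals the whole scheme, so $\Sigma_{\mathds{Q}} = Y_{\mathds{Q}}$. The constructible complement $C := Y_{\mathds{Z}} \setminus \Sigma$ therefore has empty $\mathds{Q}$-fibre, so its image in $\spec(\mathds{Z})$, again constructible by Chevalley, misses the generic point and is a finite set of closed points. For every prime $p$ outside this finite set with $\gcd(p, n_l) = 1$ for all $l$ we obtain $\Sigma(\kk{\mathds{F}_p}) = Y(\kk{\mathds{F}_p})$, which together with the first step gives $\CEQs{P_p} = \{[(P_p, X_{P_p})]\}$.

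The main obstacle is the universal family of the first step: one must exhibit a finite-dimensional $\mathds{Z}$-parameter space that realizes every element of $\CEQs{P_k}$ uniformly in the characteristic, using the nilpotent structure of the rings defining $W_{lj}$ to truncate the polynomials $p_{x,l,j}, p_{y,l,j}$, and verify that the resulting total space is flat with projective fibres of constant Hilbert polynomial, so that the $\Iso$-scheme of the second step is representable and of finite type. Once this is set up, the descent from characteristic zero to all but finitely many positive characteristics is the standard constructibility argument.
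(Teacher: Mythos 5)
Your route is entirely different from the paper's. The paper argues cohomologically: Laufer's Theorem 3.9 over $\C$ turns the hypothesis $\CEQs{P_1}=\{[(P_1,X_{P_1})]\}$ into $h^1(P_1,\Theta_{P_1})=0$; since the matrix $M_P$ of Corollary \ref{cor_h1_plumbing_rank} has integer entries and $M_{P_p}$ is its reduction mod $p$, rank semicontinuity (Proposition \ref{prop_dim_h1_goes_up}) gives $h^1(P_p,\Theta_{P_p})=0$ for all but finitely many $p$, and Proposition \ref{prop_CEQsP_triv_if_h1_0} converts that back into triviality of $\CEQs{P_p}$. Your spreading-out argument would, if it worked, be attractive precisely because it avoids the appeal to the ``only if'' direction of Laufer's theorem, which is exactly the direction the paper cannot establish in positive characteristic and only conjectures. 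Your second step (Isom-scheme, Chevalley constructibility, Jacobson argument over $\mathds{Q}$, finitely many bad primes) is standard and sound, granting the first step.

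The genuine gap is the first step, and the one-line justification you give for it is false. The ideal $(\OS{x}_{l,i_l}^{n_j}y_{l,i_l}^{n_l})$ does \emph{not} make the quotient $\OS{A}_{lj}/(\OS{x}_{l,i_l}^{n_j}y_{l,i_l}^{n_l})$ finite-dimensional over $k$: only monomials divisible by \emph{both} $\OS{x}_{l,i_l}^{n_j}$ and $y_{l,i_l}^{n_l}$ die, so the overlap is a thickened union of two crossing lines, not a fat point, and the polynomials $p_{x,l,j},p_{y,l,j}$ do \emph{not} truncate to finitely many monomials. Even the finer observation that $p$ only matters modulo the annihilator of $\OS{x}_{l,i_l}y_{l,i_l}$, i.e.\ modulo $(\OS{x}_{l,i_l}^{n_j-1}y_{l,i_l}^{n_l-1})$, still leaves an infinite-dimensional space of parameters. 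To cut this down to a finite-type $\mathds{Z}$-scheme surjecting onto $\CEQs{P_k}$ for every $k$, you must show that gluings differing in high order along either branch can be absorbed by genuine automorphisms of the charts $W_l$, $W_j$ --- this is the integrated version of Lemmas \ref{lem_fixed_contrib_Pl} and \ref{lem_fixed_contrib_Pj}, which the paper only establishes at the level of vector fields --- and you must show the resulting bounds are uniform in the characteristic (note that the paper's own computation of $\Theta_{P,E_l}$ acquires extra terms when $p\mid n_l$, which is why $\gcd(p,n_l)=1$ is assumed everywhere). You flag this as ``the main obstacle'' but treat it as a verification; it is in fact the substantive content, roughly equivalent to redoing the moduli-theoretic core of Laufer's Theorem 3.9 over $\mathds{Z}$. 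As written, the proof is not complete.
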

\begin{proof}
By \cite{MR0367277}, Theorem 3.9 from $\CEQs{P_1}=\{[(P_1,X_{P_1})]\}$ we get $h^1(P_1,\Theta_{P_1}) =0$,
which by Proposition \ref{prop_dim_h1_goes_up} implies $h^1(P_p,\Theta_{P_p})=0$ for all but finitely many $p>1$.
So we get $\CEQs{P_p}=\{[(P_p,X_{P_p})]\}$ for the good $p$ with Proposition \ref{prop_CEQsP_triv_if_h1_0}.
\end{proof}
If we transfer this to the tautness of normal two-dimensional singularities, we get our main theorem:
\begin{thm}\fslabel{thm_0_taut_implies_most_p_taut}
Let $S_1$ be a normal two-dimensional singularity over $\C$ with dual graph $\Gamma$. For all primes $p$ let
$S_p$ be a singularity over an algebraically closed field of characteristic $p$ with dual graph $\Gamma$.
If $S_1$ is taut, then $S_p$ is taut for all but finitely many $p$.
\end{thm}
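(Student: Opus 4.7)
The plan is to chain together Corollary \ref{cor_taut_if_neqs_trivial}, which characterizes tautness of a normal two-dimensional singularity by the triviality of $\CEQs{\nu \OS{Z}}$ for one sufficiently large multiplicity, with Proposition \ref{prop_0_comb_def_implies_most_p_comb_def}, which propagates triviality of $\CEQs{\cdot}$ for plumbing schemes from $\C$ to almost every positive characteristic. Everything else is a matter of choosing data that does not depend on $p$.

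First I would fix an anti-ample cycle $\OS{Z} = \suml_{l=1}^n n_l E_l$ for $\Gamma$ and form the characteristic-free invariants $\lambda$ and $\tau_{min}$ attached to $\Gamma$. I then discard the finitely many primes dividing any of the integers $n_1,\dots,n_n$ or $\lambda + \tau_{min} + 1$. For every remaining $p$, the significant multiplicity $\nu$ of $\OS{Z}$ (Definition \ref{def_sign_multi}) equals its characteristic-zero value, and $\gcd(p, \nu n_l) = 1$ for all $l$, which is exactly what the plumbing-scheme machinery of the previous section requires. Lemma \ref{lem_necessary_cond_for_taut} applied to the taut singularity $S_1$ guarantees that $\Gamma$ is potentially taut, so the plumbing scheme $P_p$ for $\Gamma_{\nu \OS{Z}}$ is well defined in each of these characteristics.

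Now I translate the hypothesis. Since $S_1$ is taut, Corollary \ref{cor_taut_if_neqs_trivial} over $\C$ yields $\CEQs{\nu \OS{Z}_1} = \{[(\nu \OS{Z}_1, X)]\}$. As $P_1$ has dual graph $\Gamma_{\nu \OS{Z}}$, the class $[(P_1, X_{P_1})]$ lies in this one-element set, so $P_1 \iso \nu \OS{Z}_1$ and hence $\CEQs{P_1} = \{[(P_1, X_{P_1})]\}$. Proposition \ref{prop_0_comb_def_implies_most_p_comb_def} then produces a further finite set of exceptional primes outside of which $\CEQs{P_p} = \{[(P_p, X_{P_p})]\}$.

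For each such good $p$, any $[(Z', X')] \in \CEQs{\nu \OS{Z}_p}$ is combinatorially equivalent to $P_p$, hence by triviality of $\CEQs{P_p}$ isomorphic to $P_p$; in particular $\CEQs{\nu \OS{Z}_p}$ is again a singleton. A second application of Corollary \ref{cor_taut_if_neqs_trivial}, this time in characteristic $p$, gives that $S_p$ is taut. The only substantive step here, the comparison of $h^1(P_1,\Theta_{P_1})$ with $h^1(P_p,\Theta_{P_p})$ via reduction of minors of $M_{P_1}$ modulo $p$, has already been absorbed into Proposition \ref{prop_dim_h1_goes_up} and thus into Proposition \ref{prop_0_comb_def_implies_most_p_comb_def}; the main obstacle I expect at this point is not mathematical but notational, namely to keep track of exactly which finite list of primes must be excluded so that $\OS{Z}$, $\nu$, and the plumbing scheme $P_p$ can be treated uniformly across characteristics.
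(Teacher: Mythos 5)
Your proposal is correct and follows essentially the same route as the paper: reduce to the potentially taut case, use Corollary \ref{cor_taut_if_neqs_trivial} to translate tautness of $S_1$ into triviality of $\CEQs{\nu\OS{Z}_1}=\CEQs{P_1}$, apply Proposition \ref{prop_0_comb_def_implies_most_p_comb_def} to get $\CEQs{P_p}=\{[(P_p,X_{P_p})]\}$ for almost all $p$, and conclude with a second application of Corollary \ref{cor_taut_if_neqs_trivial}. Your explicit bookkeeping of the excluded primes (those dividing some $n_l$ or forcing $\nu_p\neq\nu_1$) and the observation that $P_1\iso\nu\OS{Z}_1$ only make precise steps the paper treats more briefly.
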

\begin{proof}
First we note, that with Lemma \ref{lem_contr_scheme} one gets the existence of at least one $S_p$ for every $p>1$.
Further, by Lemma \ref{lem_necessary_cond_for_taut} we may assume that $\Gamma$ is potentially taut.

Now let $\OS{Z}_p$ be an anti-ample divisor for $S_p$ with significant multiplicity $\nu_p$.
By Corollary \ref{cor_taut_if_neqs_trivial} the tautness of $S_1$ implies $\CEQs{\nu_1 \OS{Z}_1} = \{[(\nu_1 \OS{Z}_1,X)]\}$.

The coefficients of $\OS{Z}_p$ are defined by combinatorial data independed of the ground field, so we can assume that all coefficients
of $\OS{Z}_1$ and $\OS{Z}_p$ are equal. By the construction of $\nu$ we have $\nu_p=\nu_1$ and 
 $\gcd(p,n_l)=1$ for all $l$ for all but finitely many $p$.

Let now $P_p$ be the plumbing scheme for $\nu_p \OS{Z}_p$.
We have $\CEQs{P_1} = \{[(\nu_1 \OS{Z}_1,X)]\}$, so we are in the situation of
Proposition \ref{prop_0_comb_def_implies_most_p_comb_def}, that is we have $\CEQs{P_p}= \{[(P_p,X_{P_p})]\}$ for all but finitely
many $p$. So for all those $p$ we get the tautness of $S_p$ from Corollary \ref{cor_taut_if_neqs_trivial}.
\end{proof}

\section{Open questions}

So far we have mainly shown which of Laufers results work also for $p>1$. Now we want to discuss the result we are not able to
carry over. In particular, we want to give evidences for a conjectural picture for those $p$ with a strict inequality in Proposition
\ref{prop_dim_h1_goes_up}.

First note that for a given $\Gamma$ we can compute the good and the bad $p$ for this $\Gamma$. With ``good'' we mean that
for this $p$ the tautness of $S_1$ implies the tautness of $S_p$.
The two places in the proof of Theorem \ref{thm_0_taut_implies_most_p_taut} where we had to exclude some primes can be healed.
The first place is very simple:
For all $p$ with $\nu_p = \nu_1+1$ we simply do the proof again, with $\nu_1$ replaced by $\nu_1+1$.
The second place needs a little more thinking, but with Lemma~\ref{lem_prim_anti_ample_cycle} we see
that we can always choose the coefficients of $\OS{Z}_1$ prim to every fixed $p$.
So going through the proof finitely many times shows that a $p$ is good if it is not one of the finitely many primes excluded by
Proposition \ref{prop_0_comb_def_implies_most_p_comb_def}. That is $p$ is good if and only if we have equality in
Proposition \ref{prop_dim_h1_goes_up}. So theoretically we are able to calculate all good $p$ for a given singularity, but in practice
the matrix $M_{P_1}$ is huge.

If we know the bad prime $p$ for $\Gamma$, and if $S_p$ is a $\Gamma$-singularity in characteristic $p$, we conjecture
that $S_p$ is not taut. Over $\C$ Laufers Theorem 3.9 of \cite{MR0367277}
is a stronger version of our Proposition \ref{prop_CEQsP_triv_if_h1_0}, which also has the inverse implication.
That is, it says $\CEQs{Z}=\{[(Z,X)]\}$ if and only if $H^1(P,\Theta_P) =0$.
A simple example shows, that this can not be true for $p>1$:
Take $E=E_1=\PR^1_k$ and $Z=pE_1$ and $\nu_1 > 1$.
With an explicit calculation one gets $\CEQs{2E_1}= \{[(2E_1,X)]\}$, and
in Corollary \ref{cor_induction_neqs} we have $\OS{Z}=E$ and $\nu=2$, so this implies $\CEQs{jE_1}= \{[(jE_1,X)]\}$ for all $j$.
In particular, we have $P\iso Z$.
But again a calculation in local coordinates, shows that one has
$h^1(Z, \Theta_Z)= h^1(P, \Theta_P)= \nu_1 -1$ for $p | i$.

So if we demand the $n_l$ to be prime to $p>1$,
then with Proposition \ref{prop_CEQsP_triv_if_h1_0} we have the ``if'' statement of Laufer's Theorem 3.9, and we think that this is also
the modification needed for the ``only if'' direction, so we propose the following conjecture:
\begin{conj}\fslabel{conj_P_Z_iso_iff_h1_0}
Let $P$ be the plumbing scheme for a potentially taut dual graph with $\gcd(p,n_l)=1$ for all $l$.
Then we have $\CEQs{P}=\{[(P,X_P)]\}$ if and only if $H^1(P,\Theta_P) =0$.
\end{conj}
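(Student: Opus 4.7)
The ``if'' direction is Proposition \ref{prop_CEQsP_triv_if_h1_0}, so my plan focuses on the converse: assuming $H^1(P,\Theta_P) \neq 0$, I want to exhibit a scheme $Z$ combinatorially equivalent to $P$ but not isomorphic to $P$. The strategy is to exploit the versal family from Proposition \ref{prop_defo_exists}, which gives a locally trivial deformation $\pi : \X \lRa Y$ over an integral affine base $Y$ whose fibers realize every element of $\CEQs{P}$, with $P$ itself sitting over a distinguished $k$-point $y_0 \in Y$.

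First I would verify that the Kodaira--Spencer map $\kappa : T_{y_0}Y \lRa H^1(P,\Theta_P)$ associated to this family is surjective. This should follow from the explicit construction of $Y$, because the glueing parameters $u_{x,l,j}, u_{y,l,j}$ together with the series $p_{x,l,j}, p_{y,l,j}$ literally range over the $1$-cocycles appearing in the description of $H^1(P,\Theta_P)$ supplied by Lemma \ref{lem_cohom_quot}. Next I would identify $\ker \kappa$ with the tangent space at $y_0$ of the orbit under the natural action of the automorphism group scheme $\Aut(P)$ on $Y$; this is the standard deformation-theoretic statement ``cocycles modulo coboundaries'', the coboundaries being the image of $H^0(P,\Theta_P) = \operatorname{Lie}\Aut(P)$ in the $1$-cocycles via the Čech differential.

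Granting both, if $H^1(P,\Theta_P) \neq 0$ then the $\Aut(P)$-orbit of $y_0$ has strictly smaller tangent space than $Y$ at $y_0$, and a dimension count on the integral scheme $Y$ should produce a $k$-rational point $y$ outside the orbit, whose fiber $\pi^{-1}(y)$ is the desired $Z$; the passage from ``outside the orbit'' to ``non-isomorphic to $P$'' is then made rigorous by the same representability of the $\Iso$-scheme used at the end of Proposition \ref{prop_CEQsP_triv_if_h1_0}, together with $k = \kk{k}$. The hard part will be exactly this orbit-dimension step in positive characteristic: $\Aut(P)$ can fail to be reduced, so its tangent action could have a larger image in $T_{y_0}Y$ than the tangent space to the geometric orbit, and the naive dimension count collapses. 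I expect the hypothesis $\gcd(p,n_l)=1$ — the same coprimality that already killed the local terms $H^1(W_l, \Theta_P|_{W_l})$ before Lemma \ref{lem_cohom_quot} — to be precisely what rules out such Frobenius-type infinitesimal automorphisms of the $W_l$, but carrying this out will require a careful structural analysis of $\Aut(P)$ that the tools developed so far do not immediately provide, which is presumably why the statement remains conjectural.
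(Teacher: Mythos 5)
The statement you are proving is labelled a \emph{conjecture} in the paper: only the ``if'' direction is established there (Proposition \ref{prop_CEQsP_triv_if_h1_0}), and the ``only if'' direction is explicitly left open, so there is no proof to compare yours against. Your outline of the converse contains two genuine gaps. First, the family of Proposition \ref{prop_defo_exists} is not a versal family for $\CEQs{P}$: it is built \emph{from a given} $Z$, with the series $p_{x,l,j},p_{y,l,j}$ fixed by that $Z$ and only the units $u_{x,l,j},u_{y,l,j}$ and the two scalars $t_x,t_y$ varying. Its Kodaira--Spencer map therefore has no reason to surject onto $H^1(P,\Theta_P)$, which by Lemma \ref{lem_cohom_quot} is a quotient of a much larger space of cocycles. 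What the paper actually has is a semi-universal element for $\Def'_P$ over Artinian bases (via Theorem 2.4.1 of Sernesi, as used in the proof of Proposition \ref{prop_CEQsP_triv_if_h1_0}); to run your argument you would need to algebraize this into a global family over an integral base whose closed fibers exhaust $\CEQs{P}$, and that step is not supplied by anything in the paper.

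Second, and more seriously, the implication ``$y$ outside the $\Aut(P)$-orbit of $y_0$ $\Rightarrow$ $\pi^{-1}(y)\not\iso P$'' is false in general: distinct points of a (semi-)versal base can carry isomorphic fibers, and a nonzero tangent space $H^1(P,\Theta_P)$ of first-order locally trivial deformations need not integrate to a genuinely new scheme. The paper's own example $P=pE_1$ with $\nu_1>1$ exhibits exactly this failure mode --- $h^1(P,\Theta_P)=\nu_1-1>0$ while $\CEQs{jE_1}$ is trivial for all $j$ --- so any correct argument must invoke $\gcd(p,n_l)=1$ at precisely the point where first-order deformations are promoted to non-isomorphic fibers. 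You correctly identify that the non-reducedness of $\Aut(P)$ in characteristic $p$ is the obstruction here, but you do not resolve it; as you yourself note, this is the reason the statement remains conjectural, and your text should be read as a plausible strategy rather than a proof.
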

To find evidence for this conjecture we look at the ADE-singularities. Artin calculated a full list of
all isomorphism classes of those in all characteristics in \cite{MR0199191}.
So we look at the non-taut ADEs, and calculate $h^1(P,\Theta_P)$ for $\nu\OS{Z}$ as in Corollary
\ref{cor_taut_if_neqs_trivial}.

This calculation can be done with the help of the computer algebra system Sage on a computer with enough memory.
We will now indicate how we have done this.

First to simplify the construction of $M_P$ we want to stick to some cycle of the form $jE$, where $E=\suml_{l=1}^n E_l$.
This is no problem because if we choose $j$ bigger then $\nu \cdot \max\{n_l\}$, then with Corollary \ref{cor_osz_triv_ind_greater_triv}
we know that we have $\CEQs{jE}\iso\CEQs{\nu \OS{Z}}$. To make sure that $p$ does not divide $j$, we take the next prime
bigger than $\nu \cdot \max\{n_l\}$ as $j$ (and $j >7$, the biggest $p$ we are interested in).

We want to discuss the calculation of the significant multiplicity $\nu$ first.
By definition $\nu$ depends on $\lambda$ and $\tau_{min}$ defined  previous to Proposition \ref{prop_red_to_finite}.
The calculation of $\lambda$ depending on the $\Gamma$ is easy, in particular we have $\lambda=0$ for all ADEs,
because $p_a(E_l)=0$ and $E_l^2=-2$ for all $l$.

The calculation of $\tau_{min}$ is not so easy. Going over all possibilities needs to much time, so
we had to find a way to compute a good upper bound for $\tau_{min}$ for all ADEs.
We take $\beta_1=1$ and then we construct $\beta_{i}$ inductively as follows:
Let $\OS{Z}_{i-1}=\suml_{l=1}^n s_{l,i-1} E_l$ and $\OS{\beta}$ be the smallest integer between $1$ and $n$
such that $s_{\OS{\beta},i-1} < n_{\OS{\beta}}$ and $E_{\OS{\beta}} \cdot (\OS{Z}_{i-1} + E_{\OS{\beta}})$ is maximal among these
$\OS{\beta}$. Then we set $\beta_i=\OS{\beta}$.
If we now calculate $\tau$ for this $\beta_i$ and our $\OS{Z}$ chosen (see below) with the help of a computer, we get always $\tau=1$.
So because all $n_l$ are greater then $1$, we simply take $\nu=2$.

Now the anti-ample cycles we used are (for reasons of readability we omit the $-2$ in the dual graphs):
\begin{center}
\begin{tikzpicture}[scale=.75]
\filldraw [black]
     (0,0.5) circle (2pt) node[below] {$(3)$}
     (0,-0.5) circle (2pt) node[below] {$(3)$} 
     (1,0) circle (2pt) node[below] {$(5)$}
     (2,0) circle (2pt) node[below] {$(3)$};
\draw (0,0.5) -- (1,0);
\draw (0,-0.5) -- (1,0);
\draw (1,0) -- (2,0);
\draw (1,-1.5) node {$\OS{Z}$ for $D_4$};
\end{tikzpicture} 
\begin{tikzpicture}[scale=.75]
\filldraw [black]
     (0,0.5) circle (2pt) node[below] {$(5)$}
     (0,-0.5) circle (2pt) node[below] {$(5)$} 
     (1,0) circle (2pt) node[below] {$(9)$}
     (2,0) circle (2pt) node[below] {$(7)$}
     (3,0) circle (2pt) node[below] {$(4)$};
\draw (0,0.5) -- (1,0);
\draw (0,-0.5) -- (1,0);
\draw (1,0) -- (3,0);
\draw (1.5,-1.5) node {$\OS{Z}$ for $D_5$};
\end{tikzpicture}
\begin{tikzpicture}[scale=.75]
\filldraw [black]
     (0,0.5) circle (2pt) node[below] {$(8)$}
     (0,-0.5) circle (2pt) node[below] {$(8)$} 
     (1,0) circle (2pt) node[below] {$(15)$}
     (2,0) circle (2pt) node[below] {$(13)$}
     (3,0) circle (2pt) node[below] {$(10)$}
     (4,0) circle (2pt) node[below] {$(6)$};
\draw (0,0.5) -- (1,0);
\draw (0,-0.5) -- (1,0);
\draw (1,0) -- (4,0);
\draw (2,-1.5) node {$\OS{Z}$ for $D_6$};
\end{tikzpicture}
\begin{tikzpicture}[scale=.75]
\filldraw [black]
     (0,0.5) circle (2pt) node[below] {$(11)$}
     (0,-0.5) circle (2pt) node[below] {$(11)$} 
     (1,0) circle (2pt) node[below] {$(21)$}
     (2,0) circle (2pt) node[below] {$(19)$}
     (3,0) circle (2pt) node[below] {$(16)$}
     (4,0) circle (2pt) node[below] {$(12)$}
     (5,0) circle (2pt) node[below] {$(7)$};
\draw (0,0.5) -- (1,0);
\draw (0,-0.5) -- (1,0);
\draw (1,0) -- (5,0);
\draw (2.5,-1.5) node {$\OS{Z}$ for $D_7$};
\end{tikzpicture}\\
\begin{tikzpicture}[scale=.78]
\filldraw [black]
     (0,1) circle (2pt) node[above] {$(8)$}
     (1,1) circle (2pt) node[above] {$(15)$} 
     (2,1) circle (2pt) node[above] {$(21)$}
     (2,0) circle (2pt) node[below] {$(11)$}
     (3,1) circle (2pt) node[above] {$(15)$}
     (4,1) circle (2pt) node[above] {$(8)$};
\draw (0,1) -- (4,1);
\draw (2,1) -- (2,0);
\draw (2,-1.5) node {$\OS{Z}$ for $E_6$};
\end{tikzpicture}
\begin{tikzpicture}[scale=.78]
\filldraw [black]
     (0,1) circle (2pt) node[above] {$(18)$}
     (1,1) circle (2pt) node[above] {$(35)$} 
     (2,1) circle (2pt) node[above] {$(51)$}
     (2,0) circle (2pt) node[below] {$(26)$}
     (3,1) circle (2pt) node[above] {$(40)$}
     (4,1) circle (2pt) node[above] {$(28)$}
     (5,1) circle (2pt) node[above] {$(15)$};
\draw (0,1) -- (5,1);
\draw (2,1) -- (2,0);
\draw (2.5,-1.5) node {$\OS{Z}$ for $E_7$};
\end{tikzpicture}
\begin{tikzpicture}[scale=.8]
\filldraw [black]
     (0,1) circle (2pt) node[above] {$(46)$}
     (1,1) circle (2pt) node[above] {$(91)$} 
     (2,1) circle (2pt) node[above] {$(135)$}
     (2,0) circle (2pt) node[below] {$(68)$}
     (3,1) circle (2pt) node[above] {$(110)$}
     (4,1) circle (2pt) node[above] {$(84)$}
     (5,1) circle (2pt) node[above] {$(57)$}
     (6,1) circle (2pt) node[above] {$(29)$};
\draw (0,1) -- (6,1);
\draw (2,1) -- (2,0);
\draw (3,-1.5) node {$\OS{Z}$ for $E_7$};
\end{tikzpicture}
\end{center}
With some simple generators written in C++ we generated text files containing the entries of $M_P$ processable by Sage.
We chose Sage, because Sage implements an algorithm for exactly our problem (\cite{dumas2002computing}).

The main problem for the calculation is the growth of the matrix. If $pt$ is the number of intersection points $x_{lj}$ then we have
$r_P= 2\cdot pt \cdot ( j^2 -j)$, and even if this just grows quadratically, for $E_8$ and $j=203$ we have already $r_P=1024380$.
On the other hand, the matrix $M_P$ is a sparse matrix with only less then $\frac{1}{1000}$ of its entries non-zero.
It is crucial to use this fact, because without it already the text files containing the entries are several gigabyte big.
Now the result of the computations is:
\begin{center}
\begin{tabular}{|c|r|r|c|r|r|r|r|c|c|c|c|}\hline
$\Gamma$ & $\max$ & $j$ & $r_P \times c_P$       &\multicolumn{4}{c|}{rank $M_P$} &  \multicolumn{4}{c|}{ $h^1(P,\Theta_P)$}\\
       & $\{n_l\}$ &    &                        &     $p=2$ &       $3$ &       $5$ &       $7$ & $2$ & $3$ & $5$ & $7$\\\hline\hline
$D_4$ &   $5$ &  $11$ & $    660 \times     735$ & $    659$ & $    660$ & $    660$ & $    660$ & $1$ & $0$ & $0$ & $0$\\\hline
$D_5$ &   $9$ &  $19$ & $   2736 \times    2944$ & $   2735$ & $   2736$ & $   2736$ & $   2736$ & $1$ & $0$ & $0$ & $0$\\\hline
$D_6$ &  $15$ &  $31$ & $   9300 \times    9827$ & $   9298$ & $   9300$ & $   9300$ & $   9300$ & $2$ & $0$ & $0$ & $0$\\\hline
$D_7$ &  $21$ &  $43$ & $  21672 \times   22662$ & $  21670$ & $  21672$ & $  21672$ & $  21672$ & $2$ & $0$ & $0$ & $0$\\\hline
$E_6$ &  $21$ &  $43$ & $  18060 \times   19049$ & $  18059$ & $  18059$ & $  18060$ & $  18060$ & $1$ & $1$ & $0$ & $0$\\\hline
$E_7$ &  $51$ & $103$ & $ 126072 \times  131532$ & $ 126069$ & $ 126071$ & $ 126072$ & $ 126072$ & $3$ & $1$ & $0$ & $0$\\\hline
$E_8$ & $135$ & $271$ & $1024380 \times 1116997$ & $1024376$ & $1024378$ & $1024379$ & $1024380$ & $4$ & $2$ & $1$ & $0$\\\hline
\end{tabular}
\end{center}
If one compares this table with Artin's list one notices, that for all non-taut ADEs
$h^1(P,\Theta_P)+1$ is exactly the number of isomorphism classes of singularities. This suggests that Theorem 3.1 of
\cite{MR0333238} may be still true for $p>1$ if we restrict the $n_l$ as before.
So we propose a stronger version of Conjecture \ref{conj_P_Z_iso_iff_h1_0}:
\begin{conj}\fslabel{conj_no_iso_class_eq_dim_h1}
Let $\Gamma$ be a potentially taut dual graph. Let $\OS{Z}$ be an anti-ample divisor for $\Gamma$ with $\gcd(p,n_l)=1$ for all $l$.
Further let $\nu$ be its significant multiplicity and $P$ the plumbing scheme for $\nu \OS{Z}$.
Then we have exactly $1+h^1(P,\Theta_{P})$ isomorphism classes of $\Gamma$-singularities.
\end{conj}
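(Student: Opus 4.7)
The plan is to adapt Laufer's proof of Theorem 3.1 of \cite{MR0333238} to positive characteristic, maintaining the assumption $\gcd(p,n_l)=1$. By Corollary \ref{cor_taut_if_neqs_trivial} together with the equality $\CEQs{P} = \CEQs{\nu\OS{Z}}$ (both depend only on the common dual graph), it suffices to show that $|\CEQs{P}| = 1 + h^1(P, \Theta_P)$. Using Lemma \ref{lem_local_iso_to_Wl}, every $Z \in \CEQs{P}$ is locally isomorphic to $P$ relative to the covering $\mathcal{U} = \{W_l\}$, so $\CEQs{P}$ identifies with the nonabelian Čech cohomology set $\check{H}^1(\mathcal{U}, \SAut(P))$, whose $1$-cocycles are the explicit modified gluings $(a_{x,l,j}, a_{y,l,j}, p_{x,l,j}, p_{y,l,j})$ described in the proof of that lemma.

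Next I would linearize. Filter $\SAut(P)$ by the normal subsheaves analogous to those in the short exact sequence \eqref{sh_sq_aut}, with associated graded pieces identified with $\SHom$-sheaves into $\Theta_P$ via the isomorphism $\lambda$ of Section 2. An iterated Mayer--Vietoris argument (Proposition \ref{prop_mayer-vietoris}), following the pattern of Lemma \ref{lem_cohom_quot}, gives a filtration of $\CEQs{P}$ whose associated graded is controlled by $H^1(P,\Theta_P)$. To count orbits rather than the full cohomology, I would use the natural $\mathbb{G}_m$-action on each $W_l$ by rescaling $x_{l,i_l} \mapsto \lambda x_{l,i_l}$ and $y_{l,i_l} \mapsto \lambda^{-\nu_l} y_{l,i_l}$ compatibly with the gluings; this acts on the parameter space by rescaling the $a_{*,l,j}$'s, producing an induced action on $\CEQs{P}$ with unique fixed point $[(P,X_P)]$. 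Using the explicit generators \eqref{eq_items_PEl_y}--\eqref{eq_items_PEl_x_tl3}, which are weight vectors, one then checks that the induced action on $H^1(P,\Theta_P)$ decomposes it into $h^1(P,\Theta_P)$ one-dimensional weight spaces, each contributing exactly one nontrivial orbit.

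The hard part will be the final counting, which has two genuine subtleties. First, one must verify that the filtration spectral sequence degenerates so that the linearized count really computes $|\CEQs{P}|$ rather than only an upper bound, because higher pages could in principle merge classes. Second, one must show that in positive characteristic the weight decomposition of $H^1(P,\Theta_P)$ has each isotypic component one-dimensional, ruling out characteristic-$p$ phenomena (Frobenius twists, nonreduced stabilizers, or extra $p$-torsion cocycles) that could either produce extra identifications or split a single weight line into several orbits. The restriction $\gcd(p,n_l)=1$ should exclude the latter, in the same way it does in Lemma \ref{lem_cohom_quot} and Proposition \ref{prop_CEQsP_triv_if_h1_0}, but a uniform verification for every potentially taut $\Gamma$ seems to require case analysis on the combinatorial structure of $\Gamma$, and is the principal technical obstacle beyond what is developed in this paper.
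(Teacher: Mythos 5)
First, a point of fact: the statement you are proving is presented in the paper as Conjecture \ref{conj_no_iso_class_eq_dim_h1}. The paper contains \emph{no proof} of it; the only support offered is computational, namely that for the non-taut ADE graphs the machine-computed values of $h^1(P,\Theta_P)+1$ agree with the number of isomorphism classes in Artin's list \cite{MR0199191}. So there is no argument in the paper to compare yours against, and any complete proof would be a genuinely new contribution beyond the paper.

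Second, your proposal is a strategy rather than a proof, and the gaps sit exactly at the decisive steps, as you yourself concede. (a) Identifying $\CEQs{P}$ with a nonabelian Čech $H^1$ and ``linearizing'' it via the filtration of \eqref{sh_sq_aut} and Lemma \ref{lem_cohom_quot} gives at best an upper bound $|\CEQs{P}|\le$ (something computed from $H^1(P,\Theta_P)$) unless you prove the degeneration you flag; nothing in the paper supplies that. (b) The $\mathds{G}_m$-orbit count is the actual content of the theorem, not a technical verification: if some weight space of $H^1(P,\Theta_P)$ had dimension $\ge 2$, your rescaling action would leave a positive-dimensional family of orbits, and the finite count $1+h^1(P,\Theta_P)$ could only hold if classes were identified by a mechanism other than the torus action. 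Asserting that every isotypic component is one-dimensional for every potentially taut $\Gamma$ is therefore the whole theorem in disguise, and neither your sketch nor the paper establishes it (the paper's evidence is only the seven ADE computations in its table). (c) You also pass from counting $|\CEQs{\nu\OS{Z}}|$ to counting isomorphism classes of singularities without comment; this requires promoting isomorphisms of the single cycle $\nu\OS{Z}$ to isomorphisms of the direct systems of Lemma \ref{lem_equal_if_div_thickings}, which is available from Proposition \ref{prop_iso_lifts_if_vanish} and Corollary \ref{cor_osz_triv_ind_greater_triv} but must be invoked. In short, the outline is a reasonable plan for adapting Laufer's Theorem 3.1 of \cite{MR0333238} to $p>1$, but as written it does not prove the conjecture, and the paper leaves the statement open as well.
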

In particular we could reformulate Theorem \ref{thm_0_taut_implies_most_p_taut} as ``$S_1$ is taut if and only if
$S_p$ is taut for all but finitely many $p$''.
Also for the bad $p$ of Theorem \ref{thm_0_taut_implies_most_p_taut} we would have that $S_p$ can not be taut.

\bibliographystyle{../../shared/halpha}	
\bibliography{../../shared/references}


\end{document}